\documentclass[11pt]{amsart}

\usepackage{amsmath,amssymb,amsthm}
\usepackage{hyperref}
\usepackage[latin1]{inputenc}
\usepackage{color}
\usepackage{import}
\usepackage{graphicx}
\usepackage{fullpage}
\usepackage{color}
\interfootnotelinepenalty=10000

\newtheoremstyle{myremark} 
    {7pt}                    
    {7pt}                    
    {}  	                 
    {}                           
    {\bf}       	         
    {.}                          
    {.5em}                       
    {}  

\theoremstyle{plain}
\newtheorem{lemma}{Lemma}[section]
\newtheorem{theorem}[lemma]{Theorem}

\newtheorem{definition}[lemma]{Definition}

\newtheorem{proposition}[lemma]{Proposition}

\newtheorem*{main}{Main Theorem}
\newtheorem*{4.3}{Theorem~4.3}
\theoremstyle{myremark}
\newtheorem{remark}[lemma]{Remark}
\newtheorem{example}[lemma]{Example}

\newcommand{\NN}{\mathbb{N}}

\newcommand{\RR}{\mathbb{R}}
\newcommand{\QQ}{\mathbb{Q}}
\newcommand{\ZZ}{\mathbb{Z}}

\newcommand{\htpyequiv}{\simeq}

\renewcommand{\subset}{\subseteq}


\newcommand{\vrc}[2]{\mathbf{VR}(#1;#2)}
\newcommand{\wf}{\mathrm{wf}}
\newcommand{\wfc}[2]{\mathrm{wf}(#1;#2)}

\newcommand{\ddist}{\vec{d}}

\newcommand{\olen}{\ell}
\newcommand{\owind}{\mathrm{wn}}
\newcommand{\per}{\mathrm{per}}
\newcommand{\swi}{\mathrm{swi}}
\newcommand{\numorb}{\mathrm{orb}}
\newcommand{\numlev}{\mathrm{lev}}

\newcommand{\md}[1]{\ \mathrm{mod}\ #1}

\newcommand{\prob}{\mathbf{P}}
\newcommand{\expect}{\mathbf{E}}

\begin{document}
\title{Random cyclic dynamical systems}
\author{Micha{\l} Adamaszek}
\address{Department of Mathematical Sciences, University of Copenhagen, Denmark}
\email{aszek@mimuw.edu.pl}
\thanks{}
\author{Henry Adams}
\address{Department of Mathematics, Colorado State University, USA}
\email{adams@math.colostate.edu}
\author{Francis Motta}
\address{Department of Mathematics, Duke University, USA}
\email{motta@math.duke.edu}
\thanks{MA supported by VILLUM FONDEN through the network for Experimental Mathematics in Number Theory, Operator Algebras, and Topology. HA supported in part by Duke University and by the Institute for Mathematics and its Applications with funds provided by the National Science Foundation. The research of MA and HA was supported through the program ``Research in Pairs'' by the Mathematisches Forschungsinstitut Oberwolfach in 2015.}
\keywords{Discrete dynamical systems, Geometric probability, Catalan numbers, Vietoris--Rips complexes}
\maketitle
\begin{abstract} 
For $X$ a finite subset of the circle and for $0<r\leq 1$ fixed, consider the function $f_r\colon X\to X$ which maps each point to the clockwise furthest element of $X$ within angular distance less than $2\pi r$. We study the discrete dynamical system on $X$ generated by $f_r$, and especially its expected behavior when $X$ is a large random set. We show that, as $|X|\to\infty$, the expected fraction of periodic points of $f_r$ tends to $0$ if $r$ is irrational and to $\frac{1}{q}$ if $r=\frac{p}{q}$ is rational with $p$ and $q$ coprime. These results are obtained via more refined statistics of $f_r$ which we compute explicitly in terms of (generalized) Catalan numbers. The motivation for studying $f_r$ comes from Vietoris--Rips complexes, a geometric construction used in computational topology. Our results determine how much one can expect to simplify the Vietoris--Rips complex of a random sample of the circle by removing dominated vertices.
\end{abstract}

\section{Introduction}

\subsection*{Finite cyclic dynamical systems.}
We are interested in a family of finite dynamical systems parametrized by a finite subset $X\subseteq S^1$ of the circle and a real number $0<r\leq 1$. Here $S^1=\RR/\ZZ$ is the circle of unit circumference equipped with the arc-length metric. For $X\subseteq S^1$ finite and $0<r\leq 1$, we define the map $f_r\colon X\to X$ which sends each point $x\in X$ to the furthest element of $X$ within clockwise distance less than $r$ from $x$ (equivalently, within angular distance less than $2\pi r$ from $x$). Iterating $f_r$ gives rise to a discrete time dynamical system on $X$, which we call a \emph{cyclic} dynamical system. We can speak of \emph{periodic} and \emph{non-periodic} points of $f_r$ and the structure and size of the \emph{periodic orbits} of $f_r$ (see Figure~\ref{fig:examples}(a,b)). 

As the set $X\subseteq S^1$ becomes more dense in $S^1$, each cyclic dynamical system $f_r$ can be seen as a discrete approximation of the rigid rotation of the circle by angle $2\pi r$. 

\begin{figure}
\centering
 	\def\svgwidth{1.0\textwidth}
	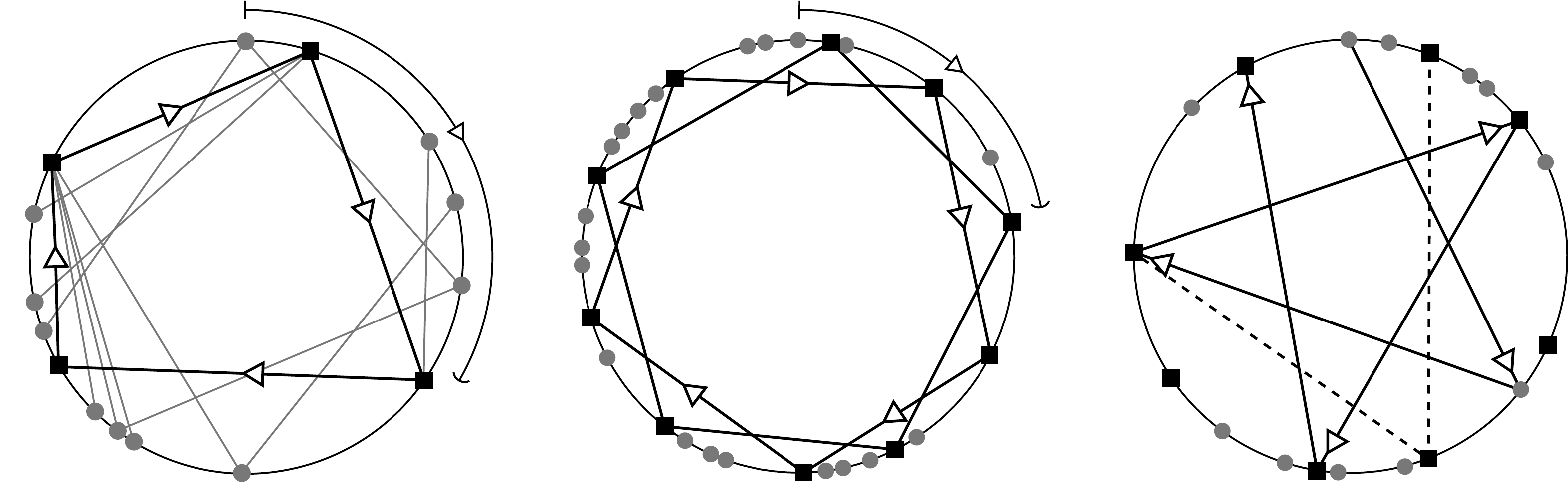
\caption{Random samples $X_n \subset S^1$ (points) and mappings $f_r: X_n \rightarrow X_n$ (edges) which move each point clockwise as far as possible within a distance less than $r$ with (a) $n = 15$, $r = 1/3$, exhibiting a single periodic orbit of length 4, and (b) $n = 30$, $r = 5/23$, exhibiting two periodic orbits of length 5. Periodic points (squares) and orbits are colored black, while non-periodic points (dots) are shaded gray. (c) A 5-swift point $x$, its clockwise-nearest periodic neighbor $c$, and their first 5 forward iterates under $f_r$ for $r = 2/5$, as in the proof of Lemma~\ref{lem:q-swift}.}
\label{fig:examples}
\end{figure}

\subsection*{Random cyclic systems --- main results.}
To get a sample of the circle one can take a random set $X=X_n$ of $n$ points chosen uniformly and independently from $S^1$. What is the asymptotic behaviour of the cyclic dynamical systems $f_r:X_n\to X_n$ as $n\to\infty$? Our main results analyze the number of periodic points (denoted $\per(X,r)$) and the structure of periodic orbits.

\begin{main} Let $X_n$ be a sample of $n$ points chosen uniformly and independently from $S^1$ and let $0<r\leq 1$.
\begin{itemize}
\item The expected fraction of periodic points in $f_r:X_n\to X_n$ is
\[ \lim_{n\to\infty}\frac{\expect[\per(X_n,r)]}{n}=\begin{cases}
0 &\mbox{if }r\mbox{ is irrational},\\
\frac{1}{q} &\mbox{if }r=\frac{p}{q}\mbox{ is rational.\footnotemark }
\end{cases}\]
\footnotetext{Throughout the paper, whenever $r$ is written as $r=\frac{p}{q}$ it is understood that $p,q\in \ZZ$ are relatively prime.}
\item If $r$ is rational, then asymptotically almost surely there is one periodic orbit.
\item If $r$ has irrationality exponent $2$, then the expected number of periodic points satisfies $\expect[\per(X_n,r)]=\Omega(n^{1/2-\varepsilon})$ for any $\varepsilon>0$.
\end{itemize}
\end{main}
The main theorem is a combination of Theorems~\ref{thm:fraction-periodic-points}, \ref{thm:oneorbit}, and \ref{thm:periodic-vertices-lower-bound}. 

Our proofs rely on a more refined count of the non-periodic points of dynamical system $f_r$. We say a non-periodic point $x\in X$ is at \emph{level} $i\ge0$ if $x \in f_r^{i}(X)\setminus f_r^{i+1}(X)$; let $\numlev_i(X,r)$ denote the number of non-periodic points at level $i$. Let $C_i$ be the $i$-th Catalan number, i.e.\ the number of Dyck paths from $(0,0)$ to $(2i,0)$, and let $C_{i,q-2}$ be the number of Dyck paths of height at most $q-2$.  

\begin{4.3}
For $0<r\leq 1$ and $i\geq 0$, the expected fraction of points at level $i$ is
\[ \ell_i(r):=\lim_{n\to\infty}\frac{\expect[\numlev_i(X_n,r)]}{n} = \begin{cases}
\frac{1}{2^{2i+1}}C_i &\mbox{if }r\notin\QQ,\\
\frac{1}{2^{2i+1}}C_{i,q-2} &\mbox{if }r=\frac{p}{q}.
\end{cases} \]
\end{4.3}

\subsection*{Related models.}
For large $n$, the map $f_r \colon X_n \to X_n$ resembles the rigid rotation of $S^1$ by angle $2\pi r$, but the periodic orbit structures of $f_r$ and of a rigid rotation are quite different. Indeed, for rational $r=\frac{p}{q}$ every point of $S^1$ is periodic under the rigid rotation and there are infinitely many periodic orbits, whereas for $f_r$ only about $\frac{1}{q}$-fraction of the points of $X_n$ are periodic and asymptotically there is a single orbit. 

Another way to produce a random endomorphism of an $n$-element set is to select $\phi \colon [n]\to [n]$ uniformly from the set of all $n^n$ possible maps. Expectations of various statistics of $\phi$ are known: asymptotically it has $\sqrt{\frac{\pi n}{2}}$ periodic points in $\mathrm{ln}\,n$ orbits \cite[Theorem 14.33]{bollobas2001random}, and the  fraction of points not in the image of $\phi$ is $(1-\frac{1}{n})^n\approx e^{-1}$. This can be compared with our model. First, $f_{p/q}$ has asymptotically a higher order of $\frac{1}{q}n$ periodic points, all in a single orbit. If $r\notin\QQ$ then we provide a lower bound $\Omega(n^{1/2-\varepsilon})$ on the number of periodic points of $f_r$, which is of similar order of magnitude as for $\phi$. The expected fraction of points not in the image of $f_r:X_n\to X_n$ approaches $1/2$ for all $r<1$.

\subsection*{Application and motivation: Vietoris--Rips complexes.}
Our initial motivation for considering the cyclic dynamical system $f_r$ was to study Vietoris--Rips complexes of finite subsets of the circle. Given a metric space $X$ and a scale parameter $r>0$, the Vietoris--Rips simplicial complex $\vrc{X}{r}$ has $X$ as its vertex set and has a finite subset $\sigma \subseteq X$ as a simplex whenever the diameter of $\sigma$ is less than $r$.
Vietoris--Rips complexes have been used to approximate the shape of a space from a finite sample \cite{Latschev2001}.
In Proposition~\ref{prop:vrchtpy} we leverage the work of \cite{AA-VRS1, AAFPP-J} to show that for $X \subseteq S^1$ finite, the homotopy type of $\vrc{X}{r}$ is determined by the periodic orbits of dynamical system $f_r\colon X\to X$.

In applications of computational topology \cite{EdelsbrunnerHarer, Carlsson2009} one would like to compute with Vietoris--Rips complexes, even though the number of simplices of $\vrc{X}{r}$ can be exponential in $|X|$. One way to simplify a simplicial complex without changing its homotopy type is to remove dominated vertices (this operation is also called a dismantling, fold, elementary strong collapse, or LC reduction \cite{BabsonKozlov2006,BarmakStar2013,Matouvsek2008}). If $X$ is chosen via a random process, how much can one expect to simplify $\vrc{X}{r}$ by removing dominated vertices? We use the map $f_r$ to provide an answer when $X$ is chosen uniformly at random from $S^1$: the expected number of remaining vertices is $o(|X|)$ for $r$ irrational and approximately $\frac{1}{q}|X|$ for $r=\frac{p}{q}$ rational (Theorem~\ref{corollary:fraction-core-points}).

\section{Preliminaries on cyclic dynamical systems}

\subsection{Definitions and basic facts}
\label{SS:Definitions-and-basic-facts}
Let $S^1=\RR/\ZZ$ be the circle of unit circumference equipped with the arc-length metric. For $x,y\in S^1$ we denote the clockwise distance from $x$ to $y$ by $\ddist(x,y)$, and we denote the closed clockwise arc from $x$ to $y$ by $[x,y]_{S^1}$. Open and half-open arcs are denoted similarly.

We give the precise definition of the main object of study in this paper.
\begin{definition}
\label{def:dyngraph}
Let $X\subseteq S^1$ be finite and let $0<r\leq 1$. We define the map $f_r\colon X\to X$ by setting $f_r(x)=\arg\max_{y\in X\cap[x,x+r)_{S^1}}\{\ddist(x,y)\}$. That is, $f_r(x)$ is the element of $X\cap[x,x+r)_{S^1}$ which is clockwise furthest from $x$.
\end{definition}
One could also use the closed arc $[x,x+r]_{S^1}$; this distinction is irrelevant in the random case.

We also refer to $f_r \colon X \to X$ as a dynamical system, by which we mean the discrete time dynamical system generated by $f_r$. It is easy to check that $f_r$ preserves the (weak) cyclic ordering of points on $S^1$ and, for this reason, we refer to $f_r$ as a \emph{cyclic} dynamical system.

A point $x\in X$ is \emph{periodic} if $f_r^i(x)=x$ for some $i\ge1$; otherwise, $x\in X$ is \emph{non-periodic}. If $x$ is periodic then we refer to $\{f_r^i(x)~|~i\ge0\}$ as a \emph{periodic orbit} with \emph{length} equal to the smallest $i\ge1$ such that $f_r^i(x)=x$.  The \emph{core} of a dynamical system is the subsystem restricted to the union of all periodic points.

The \emph{winding number} $\owind(x_1,\ldots,x_s)$ of a sequence of points $x_1,\ldots,x_s\in S^1$ is the number of times the closed walk $x_1\to x_2\to\cdots\to x_s\to x_1$ wraps around the circle; we have $\owind(x_1,\ldots,x_s)=
\sum_{i=1}^{s}\ddist(x_i,x_{i+1})$ where $x_{s+1}=x_1$.
A periodic orbit of length $\olen$ containing $x\in X$ has winding number $\owind(x,f_r(x),\ldots,f_r^{\olen-1}(x))$.

\begin{example}
For $0\leq k<n$ let $\mathrm{Reg}_n^k$ denote the ``regular'' cyclic dynamical system with domain $X=\{\frac{0}{n},\frac{1}{n},\ldots,\frac{n-1}{n}\}\subseteq S^1$ and with $r=\frac{k}{n}+\varepsilon$ for some $0<\varepsilon<\frac{1}{n}$, so that $f_r(\frac{i}{n})=\frac{i+k}{n}\ \mathrm{mod}\ 1$. All points of this system are periodic. If $d=\mathrm{gcd}(k,n)$ then $\mathrm{Reg}_n^k$ has $d$ periodic orbits, each of length $n/d$ and winding number $k/d$. It follows from Lemma~\ref{lem:same-orbits} below that the core of any cyclic dynamical system is isomorphic to $\mathrm{Reg}_n^k$ for some $0\leq k< n$. For the systems in Figure~\ref{fig:examples}(a,b) the cores are $\mathrm{Reg}_4^1$ and $\mathrm{Reg}_{10}^2$, respectively.
\end{example}

Consider a periodic orbit of length $\olen$ and winding number $w$. Necessarily $\olen$ and $w$ are relatively prime and $w<\olen$. If the points along this periodic orbit are cyclically ordered as $x_0,\ldots, x_{\olen-1}$, then it follows that $f_r(x_i)=x_{(i+w) \md \olen}$ for all $i$, since $f_r$ is cyclic and bijective on the points of the orbit.
It is not hard to show that any two periodic orbits are interleaved; this yields the following lemma.

\begin{lemma}
\label{lem:same-orbits}
All periodic orbits of $f_r\colon X\to X$ have the same length $\olen$ and the same winding number $w$.
\end{lemma}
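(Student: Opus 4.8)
The plan is to argue that any two periodic orbits are ``interleaved'' on the circle, and then deduce that they must share the same combinatorial rotation data. First I would fix two periodic orbits $A$ and $B$ (if there is only one, there is nothing to prove). Since $f_r$ preserves the weak cyclic order of points on $S^1$ and is a bijection on each orbit, the restriction of $f_r$ to $A$ is a cyclic rotation of the cyclically ordered set $A$, and likewise for $B$. Concretely, writing $A = \{a_0,\dots,a_{\olen_A-1}\}$ in clockwise cyclic order, there is some $w_A$ coprime to $\olen_A$ with $f_r(a_i)=a_{(i+w_A)\bmod \olen_A}$, as observed just before the lemma; similarly for $B$ with parameters $\olen_B, w_B$.

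The key step is to control how $A$ and $B$ sit relative to each other. Consider the cyclic sequence obtained by merging $A$ and $B$ in clockwise order around $S^1$. Because $f_r$ preserves weak cyclic order, it acts on this merged cyclic sequence as an order-preserving bijection, hence again as a cyclic rotation — but now I must check it genuinely rotates, i.e.\ that no point of $A$ and no point of $B$ ever ``collide'' or reverse order. The point $f_r(x)$ is the clockwise-furthest element of $X$ within clockwise distance $<r$ of $x$; if $a\in A$ and $b\in B$ with, say, $b$ lying clockwise-immediately after $a$, I would show $f_r(a)$ still lands clockwise-before-or-at $f_r(b)$, and that the strict interleaving pattern of $A$ and $B$ is preserved under one application of $f_r$. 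Iterating, the interleaving pattern is preserved along the whole orbit. Since $f_r$ eventually returns each periodic point to itself, tracking the total clockwise displacement (the winding number) around one full cycle through $A$ forces a compatibility: the winding number accumulated by $A$ over $\olen_A$ steps and the winding number accumulated by $B$ over $\olen_B$ steps must be ``synchronized'' by the interleaving, which pins down $\olen_A=\olen_B$ and $w_A=w_B$. A clean way to package this: define the map $g$ on the union $A\cup B$ sending each point to the next point of $A\cup B$ encountered clockwise after it; then $f_r$ commutes with the cyclic rotation action in a way that shows $A$ and $B$ partition $A\cup B$ into $g$-equivalence-shifted copies, so they have equal size and equal winding.

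Alternatively — and this may be the cleanest route — I would compare winding numbers directly. The winding number $w$ of a periodic orbit of length $\olen$ satisfies $w = \sum_{i=0}^{\olen-1}\ddist(x_i, f_r(x_i))$, and since each single-step clockwise displacement is less than $r$, we get $w < \olen r$; conversely, because $f_r(x)$ is the \emph{furthest} point within distance $<r$, consecutive points of the orbit are ``packed'' so that $\ddist(x_i, f_r(x_i))$ plus the gap to the next orbit point exceeds $r$, giving a lower bound forcing $w/\olen$ into a narrow window. If $A$ and $B$ had different ratios $w_A/\olen_A \neq w_B/\olen_B$, these windows — whose width shrinks as the orbits interleave — would be incompatible. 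Making this quantitative and showing the two ratios must coincide is the main obstacle: I expect the crux is proving that interleaving two periodic orbits cannot produce an ``intermediate'' rotation number, which is really a discrete analogue of the uniqueness of the rotation number for circle homeomorphisms. Once $w_A/\olen_A = w_B/\olen_B$ with both fractions in lowest terms, we conclude $\olen_A=\olen_B$ and $w_A=w_B$, completing the proof.
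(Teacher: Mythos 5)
Your first sketch identifies the right key object --- the restriction of $f_r$ to the $f_r$-invariant set $A\cup B$, which is a bijection preserving cyclic order and hence a cyclic rotation of the merged cyclic sequence. (The paper gives essentially no proof here, only the one-line remark that any two periodic orbits are interleaved.) Your digression about whether $f_r$ ``genuinely rotates'' and whether points ``collide or reverse order'' is unnecessary: injectivity on $A\cup B$ is automatic because $A$ and $B$ are orbits, and an order-preserving bijection of a finite cyclically-ordered set is automatically a rotation. The genuine gap is in the step that follows. You conclude that ``$A$ and $B$ partition $A\cup B$ into $g$-equivalence-shifted copies, so they have equal size and equal winding,'' but being $g$-shifted copies of one another already presupposes $|A|=|B|$, which is half of what you are trying to prove, so as written this is circular. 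The missing observation is elementary but it is the crux: if $f_r|_{A\cup B}$ is rotation by $s$ on the $m$-element cyclic set $A\cup B$ with $m=|A|+|B|$, then \emph{every} orbit of that rotation has the same length $m/\gcd(s,m)$; since $A$ and $B$ are single orbits, $\olen_A=\olen_B=m/\gcd(s,m)$, which forces $\gcd(s,m)=2$, and only then does one get the interleaving $B=g(A)$ with $g$ the clockwise shift by one on $A\cup B$. Once interleaving is in hand, your telescoping idea for the winding numbers does work: $g$ commutes with the rotation $f_r|_{A\cup B}$, so $w_B=\sum_{a\in A}\ddist\bigl(g(a),g(f_r(a))\bigr)=\sum_{a\in A}\bigl(\ddist(a,f_r(a))-\ddist(a,g(a))+\ddist(f_r(a),g(f_r(a)))\bigr)=w_A$, the last two sums cancelling because $a\mapsto f_r(a)$ permutes $A$.

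Your second, ``alternative'' approach --- pinching each $w/\olen$ into a narrow metric window using the fact that $f_r(x)$ is the \emph{furthest} point within clockwise distance $<r$ --- does not go through as sketched, and you flag this yourself as ``the main obstacle.'' Two distinct reduced fractions can both lie in any interval $(r-\delta,r)$, so upper and lower bounds on the rotation numbers do not by themselves force the ratios to agree. The constraint really is combinatorial (the cyclic-shift structure on the merged orbit), not metric; the metric estimate you gesture at is closer in spirit to Lemma~\ref{lem:wf-lower-bound}, which serves a different purpose later in the paper.
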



\begin{definition}
\label{def:wf}
The common value of $\frac{w}{\ell}$ for all periodic orbits of $f_r:X\to X$ is called the \emph{winding fraction} and is denoted $\wf(X,r)$.
\end{definition}
Since $X$ is finite, the iterates of any point under $f_r$ eventually reach a periodic orbit, after which they wind $w$ times around $S^1$ every $\olen$ steps. It follows that for \emph{any} $x\in X$ we have 
\begin{equation}
\label{eq:wf-formula}
\wf(X,r)=\lim_{n\to\infty}\frac{1}{n}\sum_{i=0}^{n-1}\ddist(f_r^i(x),f_r^{i+1}(x)).
\end{equation}
In particular, since $\ddist(x,f_r(x))< r$ for all $x\in X$, we immediately conclude $\wf(X,r)< r$.
\begin{definition}
\label{def:level}
The \emph{level} of a non-periodic point $x\in X$ is the unique $i$ such that $x \in f_r^{i}(X)\setminus f_r^{i+1}(X)$. We denote the number of non-periodic points at level $i\geq 0$ by $\numlev_i(X,r)$, and we write $\per(X,r)$ for the number of periodic points of the system.
\end{definition}
By convention we set $f_r^0=\mathrm{id}_X$, so the vertices at level $0$ are those not in the image of $f_r$. 
 Of course, we have
$
\per(X,r)=|X|-\sum_{i\geq 0}\numlev_i(X,r).
$

\subsection{Swift points}
\label{sect:swift}

When $r=\frac{p}{q}$ is rational, we introduce a useful tool  for identifying some of the periodic points of $f_r \colon X \to X$, which we will then count in Theorem~\ref{thm:fraction-periodic-points}.

\begin{definition}\label{def:q-swift}
For $r=\frac{p}{q}$, a point $x\in X$ is \emph{$q$-swift} if $\owind(x,f_r(x),\ldots,f_r^q(x))=p$ and the open arc $(f_r^q(x),x)_{S^1}$ does not contain any point of $X$.
\end{definition}

Each step under $f_{p/q}$ moves points by less than $\frac{p}{q}$, so during $q$ steps we wrap around the circle at most $p$ times. Intuitively, $x$ is $q$-swift if in $q$ steps it moves as far as possible, wrapping around $S^1$ almost $p$ times and ending at the nearest counter-clockwise neighbor of $x$; see Figure~\ref{fig:examples}(c).

\begin{lemma}\label{lem:q-swift}
Suppose $r=\frac{p}{q}$ and $f_r\colon X\to X$ contains a $q$-swift point $x$. Then the point $f_r^q(x)$ is periodic, and the system has a single periodic orbit whose winding number $w$ and length $\olen$ satisfy $\olen p-wq=1$.
\end{lemma}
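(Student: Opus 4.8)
The plan is to analyze the forward orbit of the $q$-swift point $x$ directly and show it must close up into a periodic orbit of the prescribed combinatorial type, then invoke Lemma~\ref{lem:same-orbits} to conclude uniqueness. First I would set $y=f_r^q(x)$ and use the two defining properties of a $q$-swift point: that $\owind(x,f_r(x),\ldots,f_r^q(x))=p$, and that the open arc $(y,x)_{S^1}$ contains no point of $X$. The second property says $y$ is the clockwise-nearest point of $X$ that is (weakly) counterclockwise of $x$; equivalently, $x$ is the clockwise-immediate successor of $y$ in the cyclic order on $X$ (or $x=y$). Since $\ddist(y,x)<r$ — because the whole arc $(y,x)_{S^1}$ is missing from $X$ and $X$ is nonempty, so this arc has length at most the longest gap, which is certainly less than $r$ once we check it, or more carefully because $f_r$ always moves a point at least to its immediate successor — we get that $f_r(y)$ is clockwise-strictly-further from $y$ than $x$ is, hence $\ddist(y,f_r(y))\ge \ddist(y,x)$, so the orbit of $y$ "catches up to or passes" the orbit of $x$.

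Next I would run the two orbits $x,f_r(x),f_r^2(x),\dots$ and $y,f_r(y),f_r^2(y),\dots$ in parallel. Because $f_r$ preserves the weak cyclic order and $y$ sits immediately counterclockwise of $x$, an easy induction shows $f_r^i(y)$ is weakly clockwise of $f_r^i(x)$ for every $i\ge 0$, i.e.\ $f_r^i(y)\in[f_r^i(x),f_r^i(x)+\text{(something small)}]$; more precisely the arc from $f_r^i(x)$ to $f_r^i(y)$ never "grows past" a full step. Tracking winding numbers: after $q$ steps the $x$-orbit has wound exactly $p$ times and landed on $y=f_r^q(x)$, so $\owind(x,\dots,f_r^q(x))=p$ is the statement $\sum_{i=0}^{q-1}\ddist(f_r^i(x),f_r^{i+1}(x)) = p$ together with $f_r^q(x)=y$. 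The key consequence is that $f_r^q$ maps the "window" $[y,x]_{S^1}$ (which contains only the two points $y$ and $x$ from $X$, and possibly coincides if $y=x$) into itself in an order-preserving way, so by finiteness $f_r^q$ has a periodic point in that window; chasing the inequalities shows $y$ itself is periodic under $f_r^q$, hence periodic under $f_r$.

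Once $y$ is known to be periodic, Lemma~\ref{lem:same-orbits} gives that \emph{all} periodic orbits share a common length $\olen$ and winding number $w$, and Lemma~\ref{lem:same-orbits} (via the $\mathrm{Reg}_n^k$ picture) tells us the periodic orbits partition into $d$ orbits for some $d$. To pin down the arithmetic I would compute the winding number of the orbit through $y$ over $q$ steps of $f_r$. Since in $q$ steps the $x$-orbit winds $p$ times and returns to the immediate counterclockwise neighbor $y$ of its start, and since the $y$-orbit stays "one slot ahead," the $y$-orbit over $q$ steps winds $p$ times as well but returns to a point one slot clockwise-ahead of where $f_r^q$ of $y$'s start would land under the $x$-pattern; the net effect, once we express $f_r^{\ell}(y)=y$ in terms of winding, is the Diophantine relation $\ell p - w q = \pm 1$, and the sign is fixed to $+1$ by the fact that $\wf(X,r)=w/\ell < r = p/q$ forces $wq < \ell p$. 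Finally, $\ell p - wq = 1$ forces $\gcd(\ell, q)$ to divide $1$, but actually I want $d=1$: since all orbits have winding number $w$ with $\gcd(w,\ell)=1$ and together they'd look like $\mathrm{Reg}$-type data, the relation $\ell p = wq+1$ combined with $p/q$ in lowest terms forces the number of orbits $d$ to be $1$ (if there were $d$ orbits the total would rescale and contradict $\gcd(p,q)=1$).

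\textbf{Main obstacle.}
The delicate part is the parallel-orbit tracking in the second paragraph: making rigorous the claim that the orbit of $y=f_r^q(x)$ stays "exactly one point behind" the orbit of $x$ in the cyclic order, and extracting from this both the periodicity of $y$ and the precise winding count, without an off-by-one error in the Diophantine relation. The monotonicity of $f_r$ with respect to the weak cyclic order is the right tool, but one has to be careful that the two orbits never "collapse together" or "separate" by more than a controlled amount, and that the arc $(f_r^q(x),x)_{S^1}$ being empty of $X$-points is genuinely preserved in the appropriate sense under iteration. Handling the degenerate case $y=x$ (where $x$ is already periodic after $q$ steps) and the bookkeeping to get the sign $\ell p - wq = +1$ rather than $-1$ are the places where the argument needs the most care; the sign will come from $\wf(X,r)<r$, i.e.\ inequality~\eqref{eq:wf-formula}.
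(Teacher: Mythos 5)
Your central mechanism does not work. You claim that $f_r^q$ maps the window $[y,x]_{S^1}$ (with $y=f_r^q(x)$) into itself, and you then want to extract a periodic point from that. But $f_r^q(y)$ has no reason to lie in $[y,x]_{S^1}$, and in general it does not. For a concrete counterexample, take $X=\{0,\,0.25,\,0.28,\,0.5,\,0.75\}$ and $r=\tfrac{1}{3}$. Then $x=0.25$ is $3$-swift: $f_r(0.25)=0.5$, $f_r^2(0.25)=0.75$, $f_r^3(0.25)=0$, the walk has winding number $1=p$, and $(0,0.25)_{S^1}\cap X=\emptyset$. Here $y=0$, and $f_r^q(y)=f_r^3(0)=0.75\notin[0,0.25]_{S^1}$. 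So the window is not invariant, and the "trap a periodic point in $\{y,x\}$" step collapses. Relatedly, your parallel-orbit tracking claim ("$f_r^i(y)$ is weakly clockwise of $f_r^i(x)$ and stays one slot behind") is not what cyclic monotonicity gives you; in the example above $f_r^i(y)$ runs \emph{counterclockwise} of $f_r^i(x)$, and nothing prevents the orbit from drifting all the way around before it closes up.

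The paper avoids this by not comparing the orbit of $y$ with the orbit of $x$ at all. Instead it introduces $c$, the first periodic point equal to or clockwise after $x$ (guaranteed to exist since $X$ is finite), and compares the orbit of $c$ with the orbit of $x$. Cyclic monotonicity then forces $f_r^q(c)\in[f_r^q(x),c)_{S^1}$; combining this with the fact that $(x,c)_{S^1}$ contains no periodic points (by minimality of $c$) and that $(f_r^q(x),x)_{S^1}\cap X=\emptyset$ (by $q$-swiftness) pins down $f_r^q(c)=f_r^q(x)=y$, which makes $y$ periodic because it lies on the forward orbit of the periodic point $c$. The same picture then delivers both remaining conclusions for free: $f_r^q(c)$ and $c$ are \emph{consecutive} periodic points and lie in the \emph{same} orbit, which rules out a second (interleaved) orbit, and identifying the core with $\mathrm{Reg}_\olen^w$ via $c\mapsto 0$, $f_r^q(c)\mapsto\tfrac{\olen-1}{\olen}$ gives $q\cdot\tfrac{w}{\olen}=p-\tfrac{1}{\olen}$, i.e.\ $\olen p-wq=1$. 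Your alternative argument for the single-orbit claim — that "$\olen p-wq=1$ together with $\gcd(p,q)=1$ forces $d=1$" — is also a gap: the Diophantine relation involves only the coprime pair $(\olen,w)$ and says nothing about the number $d$ of orbits, so some geometric input (the consecutiveness of $f_r^q(c)$ and $c$) is genuinely needed.
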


Definition~\ref{def:q-swift} is really a special case of a more general notion (Definition~\ref{def:qi-swift}) useful for identifying periodic points; a $q$-swift point will be the same thing as a $(q,0)$-swift point. Lemma~\ref{lem:q-swift} follows from the more general Lemma~\ref{lem:qi-swift}.

\begin{definition}\label{def:qi-swift}
For $r=\frac{p}{q}$, a point $x\in X$ is \emph{$(q,i)$-swift} if $\owind(x,f_r(x),\ldots,f_r^q(x))=p$ and $x$ is the first clockwise point after the preimage set $f_r^{-i}(f_r^{q+i}(x))$.
\end{definition}

\begin{lemma}
\label{lem:qi-swift}
Suppose $r=\frac{p}{q}$ and $f_r\colon X\to X$ contains a $(q,i)$-swift point $x$. Then the point $f_r^{q+i}(x)$ is periodic, and the system has a single periodic orbit whose winding number $w$ and length $\olen$ satisfy $\olen p-wq=1$.
\end{lemma}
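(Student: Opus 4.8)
The plan is to reduce Lemma~\ref{lem:qi-swift} to its special case $i=0$, which is Lemma~\ref{lem:q-swift}. The key point is that $f_r$ restricts to a cyclic dynamical system on the smaller vertex set $X_i:=f_r^i(X)$ (note $f_r(X_i)=f_r^{i+1}(X)\subseteq X_i$), and this restricted system has \emph{exactly the same} periodic points, periodic orbits, length $\olen$ and winding number $w$ as $f_r\colon X\to X$: every periodic point of $f_r\colon X\to X$ lies in $f_r^m(X)$ for all $m$, hence in $X_i$. So it suffices to prove that $z_i:=f_r^i(x)$ is a $q$-swift (i.e.\ $(q,0)$-swift) point of $f_r\colon X_i\to X_i$, and then apply Lemma~\ref{lem:q-swift} to $(X_i,r)$.

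To verify that $z_i$ is $q$-swift for $X_i$, write $c'$ for the clockwise-extreme point of the arc $S:=f_r^{-i}(f_r^{q+i}(x))$ and $y:=f_r^{q+i}(x)=f_r^i(c')$; by the second clause of Definition~\ref{def:qi-swift}, $x$ is the immediate clockwise $X$-successor of $c'$. First, the adjacency requirement: $f_r^q(z_i)=y$, and $z_i$ must be the first clockwise point of $X_i$ after $y$. This holds because $f_r^i$, being weakly order preserving, collapses the whole arc $S$ --- which contains $f_r^q(x)$ (as $f_r^i(f_r^q(x))=y$) and has clockwise-extreme point $c'$ --- onto $y$, while sending $x$, the next $X$-point after $c'$, to the next point of $X_i$ after $y$, which is $z_i$. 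Second, the winding requirement $\owind(z_i,f_r(z_i),\dots,f_r^q(z_i))=p$: working in the covering $\RR\to\RR/\ZZ=S^1$ and writing $\tilde f$ for a lift of $f_r$ (weakly increasing, $\tilde f(t+1)=\tilde f(t)+1$, with $t\le\tilde f(t)<t+r$ on the lifted vertex set), the first clause of Definition~\ref{def:qi-swift} says the ``forward length'' $\tilde f^q(\tilde x)-\tilde x$ lies in $(p-1,p)$; since a weakly monotone degree-one self-map carries an arc of length $<1$ to an arc of length $\le 1$, the forward length of the $z_i$-orbit also lies in $(p-1,p)$, which is equivalent to $\owind(z_i,\dots,f_r^q(z_i))=p$. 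The single exception is when $f_r^i$ is so contracting that $|X_i|=1$: then $X_i=\{y\}$ is a fixed point, a length-$1$ interval in the cover is invariant, the forward length of any orbit over $q$ steps is $<1$, hence $p=\owind(x,\dots,f_r^q(x))\le 1$ forces $p=1$, $\olen=1$, $w=0$, and $\olen p-wq=1$ holds outright (a short argument rules out a second periodic orbit). Granting this, Lemma~\ref{lem:q-swift} for $(X_i,r)$ gives the conclusion, which transfers to $(X,r)$ by the first paragraph.

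It remains to indicate the proof of Lemma~\ref{lem:q-swift} itself, which carries the real content. Put $c:=f_r^q(x)$ and normalize the cover so that a lift of $c$ is an integer. The crux is an \emph{advance lemma}: the two defining conditions of a $q$-swift point force $g:=\tilde f^q$ to increase the integer part of every lifted vertex by exactly $p$, with the sole exception of lifts of $c$, where the increase is $p-1$. (The adjacency clause confines every vertex other than $c$ to a half-open arc whose $g$-image lands in $[p,p+1)$; the winding clause forces the value of $g$ at the coordinate of $x$ to equal $p$, which --- since each of the $q$ steps from $c$ moves by less than $r=p/q$, and by monotonicity --- confines $g$ at $c$ to $[p-1,p)$.) Telescoping these increments around any $g$-cycle $Z$ contained in the periodic core gives $(\text{winding of }Z)=|Z|\,p-(\#\text{ occurrences of }c\text{ on }Z)$; on the other hand $Z$ sits inside some $f_r$-orbit, of length $\olen$ and winding $w$, as one of its $\gcd(\olen,q)$ $g$-cycles, so $|Z|=\olen/\gcd(\olen,q)$ and the winding of $Z$ equals $wq/\gcd(\olen,q)$. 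If $c\notin Z$ this forces $wq=\olen p$, i.e.\ $\wf(X,r)=r$, contradicting $\wf(X,r)<r$. Hence: $c$ is periodic (otherwise its $g$-orbit would enter a $g$-cycle avoiding $c$); there is a single $g$-cycle, it contains $c$, and it exhausts the core, so $\gcd(\olen,q)=1$ and the system has one periodic orbit; and telescoping around this one $g$-cycle gives $wq=\olen p-1$, as claimed.

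The hard part --- essentially the only place care is needed --- is the bookkeeping forced by $f_r$ being merely weakly monotone (it may identify points): one must choose mutually compatible lifts of $y$, $c'$, $f_r^q(x)$ and $x$; justify that an arc of length $<1$ stays short under $f_r$ and $f_r^i$, with a genuine exception when it collapses to a point; and dispose of the degenerate fixed-point situation separately, as indicated. Everything else follows formally from Lemma~\ref{lem:same-orbits} and the inequality $\wf(X,r)<r$.
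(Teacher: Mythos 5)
Your proof is correct but takes a genuinely different route from the paper's. The paper proves Lemma~\ref{lem:qi-swift} directly (and then obtains Lemma~\ref{lem:q-swift} as the special case $i=0$): it takes $c$ to be the first periodic point clockwise from $x$, uses cyclic-order preservation to show $f_r^q(c)\in f_r^{-i}(f_r^{q+i}(x))$, deduces that $f_r^q(c)$ and $c$ are \emph{consecutive} periodic points in the \emph{same} orbit (so there is one orbit by the interleaving observation preceding Lemma~\ref{lem:same-orbits}), and reads off $\olen p - wq = 1$ from the identification of the core with $\mathrm{Reg}_{\olen}^w$. You instead reverse the logical direction --- reducing the general statement to the $i=0$ case by restricting $f_r$ to $X_i = f_r^i(X)$ (which works because, as you can check, $f_{X,r}|_{X_i}=f_{X_i,r}$ and the periodic core is unchanged) --- and then prove the $i=0$ case via a rotation-number-flavored ``advance lemma'': normalize a lift so $\tilde c = 0$, show $g := \tilde f^q$ sends every lifted vertex in $[\tilde x, 1)$ into $[p,p+1)$ and sends $0$ into $[p-1,p)$, then telescope floors around each $g$-cycle in the core against the alternative count $W = wq/\gcd(\olen,q)$, invoking $\wf(X,r)<r$ to conclude every $g$-cycle contains $c$. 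Both arguments are sound. The paper's is shorter and needs no degenerate-case cleanup: it never passes to $X_i$, so the ``$|X_i|=1$'' edge case you must dispose of separately never arises, and it avoids the gcd-bookkeeping entirely by working with a single pair of consecutive periodic points. Your approach buys a more structural explanation of \emph{why} $\olen p - wq = 1$ (as a defect in the $g$-increment at $c$), makes the role of the inequality $\wf < r$ explicit, and packages the reusable observation that $q$-swiftness is inherited by the restriction to $f_r^i(X)$, at the cost of extra care with weak monotonicity (the strict inequality in the forward-length bound $(p-1,p)$ can degenerate to equality exactly when $x$ lies \emph{in} the preimage set, which the definition forbids) and with the collapsed case $|X_i|=1$.
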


\begin{proof}
Let $c$ be the first periodic point equal to or clockwise after a $(q,i)$-swift point $x$, as in Figure~\ref{fig:examples}(c). Since $f_r$ preserves the cyclic ordering of $X$, by comparing iterates of $c$ and $x$ we see $\owind(c,f_r(c),\ldots,f_r^q(c))=p$. This follows since $f_r^q(c)\in[f_r^q(x),c)_{S^1}$, where the interval is half open because $c$ cannot wrap $p$ times around $S^1$ in $q$ steps. By definition of $c$, $(x,c)_{S^1}$ contains no periodic points and hence no iterates of $c$. Thus, the only possible periodic points in $[f_r^q(x),c)_{S^1}$ are in $[f_r^q(x),x)_{S^1}$. That means 
\[ f_r^q(c)\in X\cap [f_r^q(x),x)_{S^1}\subseteq f_r^{-i}(f_r^{q+i}(x)), \]
where the last inclusion follows from the observation that if $y \in X\cap [f_r^q(x),x)$ but $y \notin f_r^{-i}(f_r^{q+i}(x))$, then $y$ must be clockwise after $f_r^{-i}(f_r^{q+i}(x))$, which violates the assumption that $x$ is $(q,i)$-swift. It follows from $f_r^q(c)\in f_r^{-i}(f_r^{q+i}(x))$ that $f_r^{q+i}(c)=f_r^{q+i}(x)$ and so  $f_r^{q+i}(x)$ is periodic.

If $c'\in X\cap [f_r^q(x),x)_{S^1}$ is any periodic point then $f_r^i(c')=f_r^{q+i}(x)=f_r^{i}(f_r^q(c))$. Since $f_r$ restricted to the set of periodic points is a bijection, we get $c'=f_r^q(c)$. It follows that the arc $(f_r^q(c),x)_{S^1}$, and consequently also the arc $(f_r^q(c),c)_{S^1}$, contain no periodic points.

We showed that $f_r^q(c)$ and $c$ are two consecutive (in the clockwise direction) periodic points of $f_r$, and they belong to same orbit. That implies $f_r$ has a single periodic orbit, since we observed that multiple periodic orbits must be interleaved. It follows that the core of $f_r$ is isomorphic to $\mathrm{Reg}_\olen^w$ for $\olen, w$ coprime. The identification $c=0$, which implies $f_r^q(c)=\frac{\olen-1}{\olen}$, combined with fact that $f_r^q(c)$ is obtained from $c$ in $q$ steps of winding number $p$, translates to the equation $q \frac{w}{\olen}=p-\frac{1}{\olen}$, i.e.\ $\olen p-wq=1$.
\end{proof}

\section{Classifying and counting non-periodic points}

\subsection{Probabilistic preliminaries}
\label{sect:prob}
We now describe the probabilistic background for our calculations. Let $X_n\subseteq S^1$ be a set of $n$ points sampled independently and uniformly at random. Fix a point $p\in S^1$, and let $P_n=\min\{\ddist(p,x)~:~x\in X_n\}$ be the random variable measuring the distance from $p$ to the nearest clockwise point of $X_n$. For $0<d\leq 1$ we have $\prob[P_n>d]=(1-d)^n$. As a consequence, for every fixed $t>0$ we get
\[ \prob[nP_n>t]=\prob[P_n>\tfrac{t}{n}]=\max\{0,(1-\tfrac{t}{n})\}^n\to e^{-t}\quad\mathrm{as}\quad n\to\infty, \]
meaning that the sequence of random variables $nP_n$ converges in distribution to the exponential random variable $T$ on $[0,\infty)$. The exponential distribution has tail $\prob[T> t]=e^{-t}$ and density $e^{-t}$ for $t\geq 0$.

Now suppose we have multiple marked points $p^1,\ldots, p^k\in S^1$, each with its own sequence of random variables $P^j_n=\min\{\ddist(p^j,x)~:~x\in X_n\}$. For any fixed $n$ the variables $P^1_n,\ldots,P^k_n$ are not independent. This observation is clear if $n$ is taken to be small, say $n<k$, in which case there must be some $p^i$ and $p^j$ whose nearest clockwise neighbor is the same point $x \in X_n$. However, as $n\to\infty$ the nearest clockwise neighbors to each $p^j$ ($j=1,\ldots,k$) are all distinct with probability tending to $1$. We conclude that the joint distribution of $(nP^1_n,\ldots,nP^k_n)$ converges to a tuple of $k$ independent exponential random variables $(T_1,\ldots,T_k)$. In particular, for any measurable event $S\subseteq \RR_+^k=\{x=(x_1,\ldots,x_k)~:~x_i\geq 0\}$ we have
\[ \lim_{n\to\infty}\prob[(nP^1_n,\ldots,nP^k_n)\in S]=\prob[(T_1,\ldots,T_k)\in S]=\int_S e^{-\|x\|_1}dx. \]
Here $\|x\|_1=\sum_{i=1}^k x_i$ since $x_i\geq 0$.

In Sections~\ref{SS:determining-the-level}--\ref{SS:qswiftcount} we use a more complicated variant where the marked points $p^1,\ldots,p^k$ are not fixed in advance, and instead $p^j$ depends on $p^{j-1}$ and its nearest neighbor in $X_n$. This will require longer calculations which we defer to Appendix~\ref{append:independence}. However, the intuition remains the same: as long as $k$ is fixed and $n\to\infty$, the principle of asymptotic independence discussed above will still apply because the $p^j$s will be sufficiently separated.

\subsection{Example: number of points in the image of $f_r$}\label{SS:example12}
We will illustrate the above method by arguing that the expected fraction of points in the image of $f_r$ for $r<1$ approaches $\frac12$ as $n\to\infty$. Fix $x_0\in X_n$; we want to show that $\lim_{n\to\infty}\prob[x_0\in f_r(X_n)]=\frac12$. Let $z$ be the distance from $x_0$ to the nearest clockwise point of $X_n$ (call that point $y_0$), and let $w$ be the distance from $x_0-r$ to the nearest clockwise point of $X_n$ (call that point $x_1$). If $w<z$ then $f_r(x_1)=x_0$, and hence $x_0\in f_r(X_n)$. If $w>z$ then $f_r(x_1)$ is clockwise at least as far as $y_0$, and moreover $x_0\notin f_r(X_n)$. The event $w=z$ has probability $0$. As discussed above, for large $n$ the distributions of $nw$ and $nz$ can be approximated by a pair of identical independent exponential random variables; hence 
$\lim_{n\to\infty}\prob[w<z]=
\lim_{n\to\infty}\prob[w>z]=\frac12$
by symmetry. It follows that $\lim_{n\to\infty}\prob[x_0\in f_r(X_n)]=\lim_{n\to\infty}\prob[x_0\notin f_r(X_n)]=\frac12$. This example is the specific case of Theorem~\ref{thm:levels} when $i=0$ and $r<1$.

\subsection{Determining the level of a non-periodic point}\label{SS:determining-the-level}
Let $X\subset S^1$ be any finite subset (not necessarily random) and fix $x_0\in X$. We define a sequence $x_0, y_0, x_1, y_1,\ldots\in X$ as follows (see Figure~\ref{fig:variables}(a)). First, let $y_0$ be the nearest clockwise neighbor of $x_0$. Inductively, we define $x_j$ as the nearest clockwise point of $X$ strictly after $x_{j-1}-r$ and $y_j$ as the nearest clockwise point of $X$ strictly after $y_{j-1}-r$. The significance of these points follows from the next lemma.

\begin{lemma}
\label{lem:preimage}
For $i\geq 0$ we have $f_r^{-i}(x_0)=X\cap [x_i,y_i)_{S^1}$.
\end{lemma}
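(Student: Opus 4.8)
The statement is an equality of two sets, so the plan is to prove two inclusions — or, more efficiently, to prove it by induction on $i$ with the base case $i=0$ handled directly and the inductive step relating $f_r^{-(i+1)}(x_0)$ to $f_r^{-1}$ of the level-$i$ set. The base case $i=0$ asks that $f_r^{-0}(x_0)=X\cap[x_0,y_0)_{S^1}$; since $y_0$ is by definition the nearest clockwise neighbor of $x_0$, the half-open arc $[x_0,y_0)_{S^1}$ contains exactly the point $x_0$, and $f_r^0=\id_X$, so both sides equal $\{x_0\}$.

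For the inductive step, assume $f_r^{-i}(x_0)=X\cap[x_i,y_i)_{S^1}$. A point $z\in X$ lies in $f_r^{-(i+1)}(x_0)$ iff $f_r(z)\in f_r^{-i}(x_0)=X\cap[x_i,y_i)_{S^1}$. So I need to characterize the preimage under a single application of $f_r$ of a half-open clockwise arc. The key observation is the defining property of $f_r$: for $z\in X$, $f_r(z)$ is the clockwise-furthest point of $X$ in $[z,z+r)_{S^1}$, equivalently $f_r(z)\succeq v$ (in clockwise order from $z$) iff $X\cap(v,z+r)_{S^1}\neq\emptyset$ and $\ddist(z,v)<r$; and $f_r(z)\prec v$ iff $X\cap[v,z+r)_{S^1}=\emptyset$. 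Using this, $f_r(z)\in[x_i,y_i)_{S^1}$ translates into the conjunction of two conditions on $z$: (i) $f_r(z)$ is clockwise at or past $x_i$, which happens iff $X$ has a point in the arc $[x_i,z+r)_{S^1}$, i.e.\ iff $z+r$ is clockwise strictly past $x_i$... more precisely iff $z$ is strictly clockwise after $x_i-r$, since $x_i\in X$; and (ii) $f_r(z)$ is strictly clockwise before $y_i$, which (since $f_r(z)$ is the \emph{furthest} point in range) happens iff $X\cap[y_i,z+r)_{S^1}=\emptyset$, i.e.\ iff $z$ is clockwise at or before the last point of $X$ strictly before $y_i-r$ — equivalently, iff $z$ is strictly clockwise before $y_{i+1}$, using that $y_{i+1}$ is defined as the nearest clockwise point strictly after $y_i-r$. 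Condition (i) says $z$ ranges over the points of $X$ at or clockwise-after the nearest clockwise point strictly after $x_i-r$, which is exactly $x_{i+1}$ by definition. Combining, $z\in X\cap[x_{i+1},y_{i+1})_{S^1}$, completing the induction.

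The one point requiring care — the main (minor) obstacle — is the handling of the endpoints and the strict-versus-weak inequalities, i.e.\ making sure the arc $[x_{i+1},y_{i+1})_{S^1}$ comes out half-open in the correct direction. This hinges on two facts that must be checked: that $x_i, y_i \in X$ (so that "clockwise at or past $x_i$" is the negation of "strictly before $x_i$", with no gap), which is immediate from their inductive definition; and that the arc $[x_i,y_i)_{S^1}$ is "short enough" that the relevant preimage points $z$ all have $z+r$ landing in a controlled region, so that the translation "$f_r(z)$ past $x_i$" $\iff$ "$z$ past $x_i-r$" is valid without wraparound ambiguity. Here one uses that $\ddist(x_i,y_i)<$ (something bounded by the spacing of $X$) is not actually needed; what is needed is only the local defining property of $f_r$ and that $r\le 1$, so $z+r$ makes sense as a single point on $S^1$. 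I would also remark that the sets $f_r^{-i}(x_0)$ for varying $i\ge 0$ are disjoint and their union over those $i$ with $x_i\neq y_i$ (equivalently, over the "live" levels) recovers the non-periodic points reaching $x_0$; but that is a corollary, not part of this lemma's proof.
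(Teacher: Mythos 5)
Your proof is correct and takes essentially the same approach as the paper's: induction on $i$, with the inductive step computing $f_r^{-1}$ of the arc $[x_i,y_i)_{S^1}$ as $X\cap(x_i-r,\,y_i-r]_{S^1}=X\cap[x_{i+1},y_{i+1})_{S^1}$. The paper states this one-step identity as a single display (with a small index typo, writing $x_{i-1}-r,y_{i-1}-r$ where $x_i-r,y_i-r$ is meant); you spell out the two boundary conditions (membership of $x_i$ in range, absence of points at or past $y_i$) explicitly, which is a more detailed version of the same argument.
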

\begin{proof}
This is true by definition when $i=0$. For the induction step, since $x_i,y_i\in X$ we have
\[ f_r^{-1}(X\cap [x_i,y_i)_{S^1})=X\cap (x_{i-1}-r,y_{i-1}-r]_{S^1} = X\cap[x_{i+1},y_{i+1})_{S^1}. \]
\end{proof}

It is possible that $x_i=y_i$, which happens precisely when $[x_{i-1}-r,y_{i-1}-r)_{S^1}$ contains no points of $X$ (Figure~\ref{fig:variables}(c)). In that case $f_r^{-i}(x_0)=\emptyset$, the point $x_0$ is at level $i-1$, and $x_j=y_j$ for $j\geq i$.

Our purpose in defining the sequence $x_i$ (or similarly, the sequence $y_i$) is actually to understand the sequence of gaps between the points $x_{i-1}-r$ and $x_i$, which we think of as a lag in the potential distance to travel under $f_r$. Relationships between these gaps, made precise in Lemma~\ref{lem:preimage2}, characterize the level of $x_0$.

We define a sequence of distances $z_1,w_1,z_2,w_2,\ldots\in\RR_+$ as follows. We set $z_1=\ddist(x_0,y_0)$ and for $j \geq 1$, we set $w_j=\ddist(x_{j-1}-r,x_j)$ and $z_{j+1}=\ddist(y_{j-1}-r,y_{j})$.

\begin{lemma}
\label{lem:preimage2}
For $i\geq 0$ we have $x_0\in f_r^i(X)$ if and only if $\sum_{j=1}^i w_j\le\sum_{j=1}^i z_j$.
\end{lemma}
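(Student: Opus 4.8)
The plan is to prove the two directions of the equivalence separately, using Lemma~\ref{lem:preimage} to convert the statement $x_0 \in f_r^i(X)$ into the combinatorial statement that the arc $[x_i, y_i)_{S^1}$ is nonempty, and then tracking how the gaps $w_j$ and $z_j$ control the relative positions of $x_j$ and $y_j$. First I would establish by induction on $i$ a precise description of the arc $[x_i, y_i)_{S^1}$ in terms of the running sums $W_i := \sum_{j=1}^i w_j$ and $Z_i := \sum_{j=1}^i z_j$. The key bookkeeping observation is that applying the ``shift back by $r$ and jump to the nearest clockwise point of $X$'' operation moves a marked point by exactly the relevant gap: if at stage $i-1$ we understand the positions of $x_{i-1}$ and $y_{i-1}$, then $x_i = (x_{i-1}-r) + w_i$ and similarly $y_i = (y_{i-1}-r) + z_{i+1}$ (with the base case $z_1 = \ddist(x_0,y_0)$ accounting for the initial separation $y_0 = x_0 + z_1$). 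Telescoping, one finds that the clockwise distance $\ddist(x_i, y_i)$ equals $Z_{i+1} - W_i$ \emph{as long as this quantity has stayed positive at every intermediate stage}, i.e. the arc $[x_i,y_i)_{S^1}$ has not yet collapsed.

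Next I would argue the forward direction: if $x_0 \in f_r^i(X)$, then by Lemma~\ref{lem:preimage} the set $X \cap [x_i, y_i)_{S^1}$ is nonempty, so in particular $x_i \neq y_i$, which forces $\ddist(x_i,y_i) > 0$; moreover nonemptiness of $f_r^i(X)$ implies nonemptiness of $f_r^j(X)$ for all $j \le i$ (the arcs are nested preimages), so no intermediate collapse occurred, and hence $Z_{i+1} - W_i > 0$. Since $z_{i+1} > 0$ always, this does not immediately give $Z_i \ge W_i$; here I need the sharper fact that $\ddist(x_i,y_i) = Z_{i+1} - W_i$ only measures the arc \emph{up to but not including} $y_i$, whereas the relevant comparison is between where $x_i$ sits and where $y_i$ sits, and one checks that $x_i$ lies weakly clockwise-before $y_i$ exactly when $W_i \le Z_{i+1}$; but because $y_i$ was defined as the nearest clockwise point after $y_{i-1}-r$ while $x_i$ was the nearest after $x_{i-1}-r$, and these two ``target points'' $x_{i-1}-r$ and $y_{i-1}-r$ are themselves separated by $Z_i - W_{i-1}$, the condition $x_0 \in f_r^i(X)$ unwinds precisely to $W_i \le Z_i$. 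I would make this last reduction careful by noting $x_i = y_i$ iff the half-open arc $[x_{i-1}-r, y_{i-1}-r)_{S^1}$ contains no point of $X$, and that arc is nonempty-of-$X$-points iff $x_i$ (the first $X$-point after $x_{i-1}-r$) satisfies $\ddist(x_{i-1}-r, x_i) < \ddist(x_{i-1}-r, y_{i-1}-r)$, i.e. $w_i < (Z_{i-1} - W_{i-1}) + z_i \cdot(\text{correction})$ — this is the step I expect to require the most care to state with the correct open/half-open conventions.

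For the reverse direction, assuming $W_i \le Z_i$, I would run the same induction to show $[x_i,y_i)_{S^1}$ contains a point of $X$ — indeed it contains $y_{i-1}$ shifted appropriately, or more cleanly one shows $x_i \in [x_i, y_i)_{S^1}$ is a genuine point of the preimage set — and conclude via Lemma~\ref{lem:preimage} that $x_0 \in f_r^i(X)$. The main obstacle throughout is purely notational rather than conceptual: keeping straight the interleaving of the two sequences $(x_j)$ and $(z_j)$ versus $(y_j)$ and $(w_j)$ (note the index shift $z_{j+1}$ pairs with $y_j$), and handling the boundary behavior when an arc collapses, so that the inequality comes out as the clean $\sum_{j=1}^i w_j \le \sum_{j=1}^i z_j$ with no off-by-one in the summation range. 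Once the telescoping identity $\ddist(x_i,y_i) = Z_{i+1}-W_i$ (valid before collapse) is in hand, together with the companion identity $\ddist(x_{i-1}-r, y_{i-1}-r) = Z_i - W_{i-1} = \ddist(x_{i-1},y_{i-1})$ expressing that shifting both points by $r$ preserves their separation, the equivalence falls out.
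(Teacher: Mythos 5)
Your proof follows the same outline as the paper's: establish the telescoping identity $\ddist(x_i,y_i)=\sum_{j=1}^{i+1}z_j-\sum_{j=1}^i w_j$, then combine Lemma~\ref{lem:preimage} with the observation that $x_i\neq y_i$ exactly when $w_i\le\ddist(x_{i-1},y_{i-1})=\sum_{j=1}^{i}z_j-\sum_{j=1}^{i-1}w_j$. The one thing worth noting is that your caution about the telescoping identity being ``valid before collapse'' is unnecessary: when the arc collapses, $x_i$ lies past $y_{i-1}-r$ and hence $w_i=\ddist(x_{i-1},y_{i-1})+z_{i+1}$, so both sides of the identity vanish and it holds unconditionally; this makes the detour through $Z_{i+1}-W_i>0$ in your forward direction avoidable.
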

\begin{proof}
The relation
\[ \ddist(x_i,y_i)=\ddist(x_{i-1}-r,y_{i-1}-r)+z_{i+1}-w_i=\ddist(x_{i-1},y_{i-1})+z_{i+1}-w_i \]
implies by induction that $\ddist(x_i,y_i)=\sum_{j=1}^{i+1}z_j-\sum_{j=1}^i w_j$ for $i\geq 0$. By Lemma~\ref{lem:preimage},
\[x_0\in f_r^i(X) \quad\Longleftrightarrow\quad x_i\neq y_i \quad\Longleftrightarrow\quad w_i\le\ddist(x_{i-1}-r,y_{i-1}-r)=\ddist(x_{i-1},y_{i-1})=\sum_{j=1}^{i}z_j-\sum_{j=1}^{i-1} w_j, \] 
completing the proof.
\end{proof}

\begin{figure}
\centering
\def\svgwidth{1.0\textwidth}
 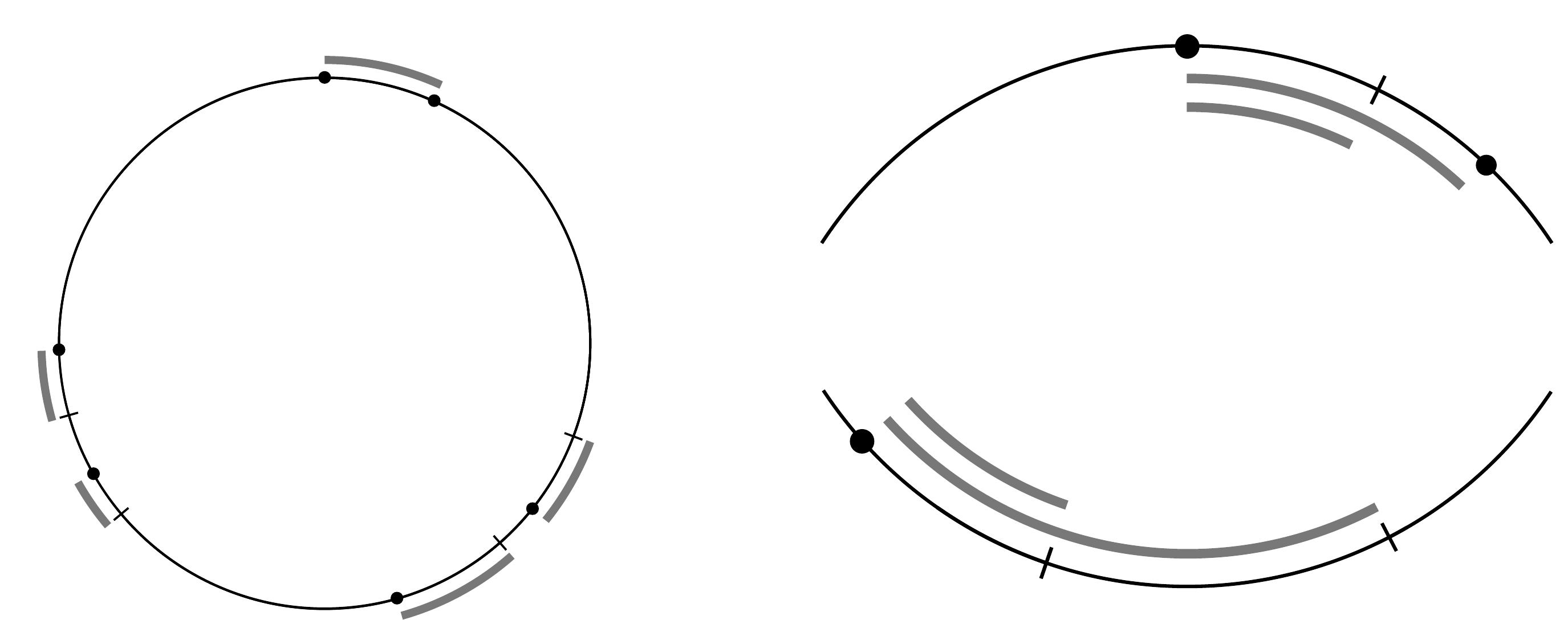
\caption{(a) Points $x_{j}, y_{j}$ and distances $z_j, w_j$. (b) The event when $x_{q-1}-r \in [x_0,y_0)_{S^1}$ and $x_q=y_0$, as considered in the proof of Proposition~\ref{prop:levels} for $r=\frac{p}{q}$. (c) Instance when $x_{0}$ is a level $i-1$ point and $x_i=y_i$.}
\label{fig:variables}
\end{figure}

\subsection{Expected fraction of points at each level}
\label{sect:prob2}
If $X_n$ is a random sample, then the distances $z_j$ and $w_j$ become random variables, which can be used to calculate the probability that an element of $X_n$ is at level $i$ as $n\to\infty$ (Proposition~\ref{prop:levels}). We adopt the convention that points in $\RR_+^{2i+2}$ are labelled as sequences $u=(z_1,\ldots,z_{i+1},w_1,\ldots,w_{i+1})$. If $H$ is a linear inequality with variables $z_j, w_j$, then we let $\neg H$ denote the inverse inequality. For instance, if $H$ is $z_1\ge w_1$ then $\neg H$ is $z_1<w_1$. By an abuse of notation we also let $H$ denote the subset of all points in $\RR_+^{2i+2}$ satisfying the linear inequality $H$. For $1\le j,k \le i+1$, let $H(j,k)$ denote the inequality $z_1+\cdots+z_j \geq w_1+\cdots+w_k$.

\begin{definition}
We define the convex cones $K_i$ and $K_i(q)$ in $\RR^{2i+2}_+$ by the following inequalities:
\begin{equation}
\label{eq:define-k}
\begin{aligned}
K_i: & & H(1,1),\ H(2,2),\ \ldots,\ H(i,i),\ \neg H(i+1,i+1).\\
K_i(q): & & H(1,1),\ H(2,2),\ \ldots,\ H(i,i),\ \neg H(i+1,i+1),\\
& & \neg H(1,q-1),\ \neg H(2,q),\ \ldots,\ \neg  H(i-q+2,i).
\end{aligned}
\end{equation}
\end{definition}
If $i\leq q-2$, then the second row of inequalities defining $K_i(q)$ is empty and $K_i(q)=K_i$.

We will now prove the main technical result of this section, Proposition~\ref{prop:levels}. Two steps in the proof, which use the principle of asymptotic independence introduced in Section~\ref{sect:prob}, are deferred to Appendix~\ref{append:independence}.


\begin{proposition}
\label{prop:levels}
For $0<r\le1$ and $i\geq 0$, the expected fraction of points at level $i$ is
\begin{equation*} \ell_i(r):=\lim_{n\to\infty}\frac{\expect[\numlev_i(X_n,r)]}{n} = \begin{cases}
\int_{K_i}e^{-\|u\|_1}du &\mbox{if }r\notin\QQ,\\
\int_{K_i(q)}e^{-\|u\|_1}du &\mbox{if }r=\frac{p}{q}.
\end{cases}
\end{equation*}
\end{proposition}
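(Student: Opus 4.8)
The strategy is to combine the deterministic characterization of Lemma~\ref{lem:preimage2} with the asymptotic independence of nearest-neighbor distances from Section~\ref{sect:prob}. First I would fix $x_0\in X_n$ and express the event ``$x_0$ is at level $i$'' purely in terms of the random distances $z_1,\ldots,z_{i+1},w_1,\ldots,w_{i+1}$ defined in Section~\ref{SS:determining-the-level}. By Lemma~\ref{lem:preimage2}, $x_0\in f_r^j(X_n)$ iff $\sum_{k=1}^j w_k \le \sum_{k=1}^j z_k$, i.e.\ iff $H(j,j)$ holds; so $x_0$ is at level exactly $i$ iff $H(1,1),\ldots,H(i,i)$ hold and $\neg H(i+1,i+1)$ holds. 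This is precisely the defining list of inequalities for the cone $K_i$. Then $\expect[\numlev_i(X_n,r)]/n = \prob[x_0 \text{ at level } i]$, so it suffices to show this probability converges to $\int_{K_i} e^{-\|u\|_1}\,du$ in the irrational case, and to the integral over the smaller cone $K_i(q)$ in the rational case.

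Next I would handle the irrational case. The distances $z_1,w_1,\ldots,z_{i+1},w_{i+1}$ are each of the form $\ddist(p^\alpha, \cdot)$ for marked points $p^\alpha$ that depend on previously chosen points and their neighbors (the $x_j-r$ and $y_j-r$). After rescaling by $n$, the tuple $(nz_1,nw_1,\ldots)$ converges in distribution to a tuple of $2i+2$ independent exponential random variables, by the ``moving marked points'' version of asymptotic independence promised in Section~\ref{sect:prob} and deferred to Appendix~\ref{append:independence}; here one must check that the marked points are asymptotically well-separated so no two of them share a nearest clockwise neighbor. Because the cone $K_i$ is defined by inequalities homogeneous of degree one in the $z_j,w_j$, it is scale-invariant, so $\prob[(z_j,w_j)_j \in K_i] = \prob[(nz_j,nw_j)_j \in K_i] \to \prob[(T_1,\ldots,T_{2i+2})\in K_i] = \int_{K_i} e^{-\|u\|_1}\,du$. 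One should note the measure-zero boundary events ($z$'s equal to $w$-sums) contribute nothing in the limit.

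For the rational case $r=p/q$ the only new ingredient is that the marked points $x_j-r$ are no longer in ``generic position'' relative to the $x_0$-orbit: since $qr = p\in\ZZ$, after $q$ lag-steps one returns to a translate of $x_0$ by an integer, i.e.\ to $x_0$ itself on $S^1$. Concretely, $x_{q-1}-r$ and the points $x_0,y_0$ interact (Figure~\ref{fig:variables}(b)): the point that is ``nearest clockwise after $x_{j+q-1}-r$'' may coincide with an already-considered point from the $x_j$-block rather than being fresh. Tracking this periodicity shows that the extra constraints $\neg H(1,q-1), \neg H(2,q),\ldots$ must hold for $x_0$ to be non-periodic at level $i$ (these encode that the ``wrap-around'' comparison at offset $q$ does not already force $x_0$ back into a deeper image), so the relevant cone shrinks from $K_i$ to $K_i(q)$; when $i\le q-2$ no wrap-around occurs within $i+1$ steps and $K_i(q)=K_i$. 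The remaining asymptotic-independence argument is identical: the finitely many genuinely-fresh distances rescale to independent exponentials and the scale-invariant cone $K_i(q)$ gives the stated integral.

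\textbf{Main obstacle.} The substantive difficulty is the rational case: pinning down exactly which of the $z_j,w_j$ are ``fresh'' independent samples versus which are tied by the period-$q$ coincidences, and verifying that the event ``$x_0$ at level $i$'' cuts out precisely the cone $K_i(q)$ rather than some other region — this is the content of Figure~\ref{fig:variables}(b) and requires careful bookkeeping of the orbit combinatorics. The asymptotic-independence step, while technically the longest, is conceptually routine and is exactly what Appendix~\ref{append:independence} is there to supply; the scale-invariance of the cones makes the passage from the rescaled limit back to the fixed-$n$ probability immediate.
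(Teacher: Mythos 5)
Your plan is correct and follows essentially the same route as the paper: translate ``level $i$'' into cone membership via Lemma~\ref{lem:preimage2}, invoke the asymptotic independence supplied by Appendix~\ref{append:independence}, and in the rational case use the period-$q$ wrap-around to show that the $q$-swift mechanism of Lemma~\ref{lem:qi-swift} forces the extra constraints that shrink $K_i$ to $K_i(q)$. One small caveat worth noting: the rescaled tuple $(nz_1,\dots,nw_{i+1})$ does \emph{not} converge unconditionally in distribution to a product of $2i+2$ independent exponentials --- once the event $x_j=y_j$ occurs (probability tending to $\ell_0+\dots+\ell_{j-1}>0$), all subsequent $w_{j'}$ and $z_{j'+1}$ coincide, so the unconditional limit puts positive mass on a lower-dimensional set --- and what Lemmas~\ref{lem:K_i}--\ref{lem:S_i(q)} actually establish is a joint density formula valid \emph{restricted to the cone} $K_i$ (resp.\ $K_i(q)$), a weaker but sufficient statement.
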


\begin{proof} 
Fix $i\geq 0$ and fix some $x_0\in X_n$. We want to compute the probability that $x_0$ is at level $i$; this occurs when $x_0\in f_r^j(X_n)$ for $1\leq j\leq i$ and $x_0\notin f_r^{i+1}(X_n)$. By Lemma~\ref{lem:preimage2} this is equivalent to the following conditions on $u=(z_1,\ldots,z_{i+1},w_1,\ldots,w_{i+1})$:
\begin{equation}
\label{eq:leveli}
z_1\ge w_1,\ \ldots,\ z_1+\cdots+z_i\ge w_1+\cdots+w_i,\quad z_1+\cdots+z_{i+1}<w_1+\cdots+w_{i+1},
\end{equation}
or in other words, to the condition $u\in K_i$. We have
\[ \ell_i(r)=\lim_{n\to\infty}\prob[x_0\mbox{ at level }i]=\lim_{n\to\infty}\prob[u\in K_i].\]
Moreover, as long as $i$ is fixed, we can pick $n$ sufficiently large so that, with probability arbitrarily close to $1$, the points $x_0,x_1,x_2,\ldots,x_i$ are arbitrarily close to $x_0,x_0-r,x_0-2r,\ldots,x_0-ir$ and the arcs $[x_0,y_0]_{S^1},[x_0-r,y_1]_{S^1},\ldots,[x_i-r,y_{i+1}]_{S^1}$ are arbitrarily short (Figure~\ref{fig:variables}).

When $r\notin\QQ$, by Lemma~\ref{lem:K_i} we have
\[ \lim_{n\to\infty}\prob[u\in K_i]=\lim_{n\to\infty}\prob[nu\in K_i]=\int_{K_i}e^{-\|u\|_1}du, \]
where the first equality follows since $K_i$ is a cone. The intuition behind Lemma~\ref{lem:K_i} is that, since $r\notin\QQ$, the points $x_0,x_0-r,x_0-2r,\ldots,x_0-ir$ are all distinct and, for $n$ sufficiently large, the arcs $[x_0,y_0]_{S^1},[x_0-r,y_1]_{S^1},\ldots,[x_i-r,y_{i+1}]_{S^1}$ are almost surely pairwise disjoint. Therefore, the distances $z_j,w_j$ can be treated as asymptotically independent (they depend on the local structure of the sample in sufficiently separated locations on the circle). By the discussion of Section~\ref{sect:prob}, $nu$ converges to a $(2i+2)$-tuple of independent exponential variables as $n\to\infty$.

Next let $r=\frac{p}{q}$ be rational. The points $x_0,x_0-r,x_0-2r,\ldots,x_0-ir$ are no longer distinct, and therefore we cannot directly treat $u$ as a sequence of independent random variables. Indeed, the event $x_j=y_{j-q}$ now happens with positive probability for any $j\geq q$ (see Figure~\ref{fig:variables}(b) for the case $j=q$), leading to a linear relation between the entries of $u$ and excluding the possibility of asymptotic independence. However, we will show that those  situations produce periodic points, rather than points at level $i$, and can be excluded using the additional inequalities defining $K_i(q)$, so that we can again restrict to disjoint intervals.

We claim that if $u$ is a sequence of gaps from points on the circle, then $u\in K_i$ if and only if $u\in K_i(q)$. First, suppose that $x_{q-1}-r\in[x_0,y_0)_{S^1}$, as in Figure~\ref{fig:variables}(b). Then $x_q=y_0$ and hence $f_r^q(y_0)=x_0$, so $y_0$ is $q$-swift and $x_0$ is periodic (hence not at level $i$) by Lemma~\ref{lem:q-swift}. The condition $x_{q-1}-r\in[x_0,y_0)_{S^1}$ is equivalent to $w_1+\cdots+w_{q-1}<z_1$, which is $H(1,q-1)$. It follows that $\neg H(1,q-1)$ is a necessary condition for $x_0$ to be at level $i$; this negation also implies that the arcs $[x_0,y_0]_{S^1}$ and $[x_q,y_q]_{S_1}$ are disjoint.

Under the assumption $\neg H(1,q-1)$ we next look at the relative order of $y_0-r$, $x_q-r$ and $y_1$. If $x_q-r\in[y_0-r,y_1)_{S^1}$ then $x_{q+1}=y_1$ is the first point of $X_n$ clockwise after preimage set $f_r^{-1}(x_0)=X\cap[x_1,y_1)_{S^1}$. Moreover,  $f_r^{q+1}(y_1)=x_0$. This means that $y_1$ is $(q,1)$-swift and $x_0$ is periodic by Lemma~\ref{lem:qi-swift}. Excluding this situation requires the condition $\neg H(2,q)$, which also assures that $[x_1,y_1]_{S^1}$ and $[x_{q+1},y_{q+1}]_{S^1}$ are disjoint.

Proceeding in a similar fashion, we see that the second row of conditions defining $K_i(q)$ in \eqref{eq:define-k} guarantees that $x_0$ is not a $(q+j)$-th image of some $(q,j)$-swift point, which is a necessary condition for $x_0$ to be at level $i$. It follows that if $u$ is a sequence of gaps from points on the circle, then $u\in K_i$ if and only if $u\in K_i(q)$. We have
\[ \ell_i(r)=\lim_{n\to\infty}\prob[u\in K_i]=\lim_{n\to\infty}\prob[u\in K_i(q)]=\lim_{n\to\infty}\prob[nu\in K_i(q)]=\int_{K_i(q)}e^{-\|u\|_1}du, \]
where the last equality is proven in Lemma~\ref{lem:K_i(q)}. The intuition behind Lemma~\ref{lem:K_i(q)} is that conditioning on $K_i(q)$ means that the distances $w_j,z_j$ can be treated as asymptotically independent, and so when restricted to $K_i(q)$ the random variable $nu$ again converges to $2i+2$ independent exponential variables.
\end{proof}

\subsection{Periodic points with $(q,i)$-swift certificates}
\label{SS:qswiftcount}

Using the last proof we can also compute, for $r=\frac{p}{q}$,  the asymptotic expected fraction of periodic points whose periodicity can be certified using Lemma~\ref{lem:qi-swift}. This will be used to get a lower bound on the expected fraction of periodic points in the proof of Theorem~\ref{thm:fraction-periodic-points}.

To make this more precise, we say that a periodic point $x_0$ is \emph{of swiftness type $i$} if $i$ is the smallest index such that $x_0=f_r^{q+i}(y)$ for some $(q,i)$-swift point $y$. Clearly a periodic point can be of swiftness type $i$ for at most one value of $i$, and it is also possible\footnote{Indeed, let $X=\{0,\frac{1}{3},\frac{2}{3}$\} and let $r=\frac{p}{q}=\frac{4}{9}$. Then $f_r$ has one periodic orbit of length three. In this example $f_r$ is invertible, and hence $f_r^{-i}(f_r^{q+i}(x)) = f_r^q(x) = x$ for all $x \in X$ and $i\ge0$. Since the clockwise first neighbor of $x$ is $f_r(x)\neq x$, no point is $(q,i)$-swift for any $i$ and no periodic point is of swiftness type $i$ for any $i$.} that a periodic point is not of swiftness type $i$ for any $i$. For $i\ge0$ let $\swi_i(X,\frac{p}{q})$ denote the number of periodic points of $f_{p/q}:X\to X$ of swiftness type $i$. Let $S_i(q)\subseteq \RR^{2i+2q-2}_+$ be the cone defined by the following inequalities in variables $u=(z_1,\ldots,z_{i+q-1},w_1,\ldots,w_{i+q-1})$:
\begin{equation}
\label{eq:define-s}
\begin{aligned}
S_i(q): & & H(1,1),\ H(2,2),\ \ldots,\ H(i+q-1,i+q-1),\\
& &  \neg H(1,q-1),\ \neg H(2,q),\ \ldots,\ \neg  H(i,i+q-2),\ H(i+1,i+q-1).
\end{aligned}
\end{equation}

The next result now follows from the proof technique of Proposition~\ref{prop:levels} and from Lemma~\ref{lem:S_i(q)}.

\begin{proposition}
\label{prop:swift-witness}
For $r=\frac{p}{q}$ and $i\geq 0$, the expected fraction of periodic points of swiftness type $i$ is
\begin{equation*} sw_i(p/q):=\lim_{n\to\infty}\frac{\expect[\swi_i(X_n,\tfrac{p}{q})]}{n}=\lim_{n\to\infty}\prob[u\in S_i(q)]=\int_{S_i(q)}e^{-\|u\|_1}du. 
\end{equation*}
\end{proposition}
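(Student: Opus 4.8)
The plan is to repeat the proof of Proposition~\ref{prop:levels} almost verbatim, with the cone $S_i(q)$ in place of $K_i(q)$ and the property ``$x_0$ is a periodic point of swiftness type $i$'' in place of ``$x_0$ is at level $i$''. Fix $i\geq 0$; exactly as in that proof, it suffices to fix a point $x_0\in X_n$ and compute $\lim_{n\to\infty}\prob[x_0\text{ is a periodic point of swiftness type }i]$. Attach to $x_0$ the sequences $x_0,y_0,x_1,y_1,\dots$ and the gap distances $z_1,w_1,z_2,w_2,\dots$ exactly as in Section~\ref{SS:determining-the-level}, and let $u=(z_1,\dots,z_{i+q-1},w_1,\dots,w_{i+q-1})\in\RR_+^{2i+2q-2}$ collect the first $i+q-1$ of each.

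The heart of the matter is a deterministic claim: for every finite $X\subseteq S^1$ and every $x_0\in X$, outside a measure-zero set (ties among the $z_j,w_j$), $x_0$ is a periodic point of swiftness type $i$ if and only if $u\in S_i(q)$. I would prove this by recycling the geometric case analysis already performed in the proof of Proposition~\ref{prop:levels}. That analysis shows that, under the standing assumptions $\neg H(1,q-1),\dots,\neg H(k,k+q-2)$, the point $y_k$ is a $(q,k)$-swift point with $f_r^{q+k}(y_k)=x_0$ exactly when the single inequality $H(k+1,k+q-1)$ holds; this inequality says that $x_{q+k-1}-r$ lands in the gap of $X$ just clockwise of $y_{k-1}-r$ (for $k=0$, the gap $[x_0,y_0)_{S^1}$), which forces $x_{q+k}=y_k$. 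Moreover, by Lemma~\ref{lem:preimage}, any $(q,k)$-swift point whose $(q+k)$-th iterate is $x_0$ must be the first clockwise point after $f_r^{-k}(x_0)=X\cap[x_k,y_k)_{S^1}$, i.e.\ it must be $y_k$; so there is at most one such witness. Reading off the three blocks of inequalities defining $S_i(q)$: the inequality $H(i+1,i+q-1)$ says precisely that $y_i$ is a $(q,i)$-swift witness for $x_0$, hence $x_0$ is periodic and of swiftness type at most $i$ by Lemma~\ref{lem:qi-swift}; the inequalities $\neg H(j,j+q-2)$, $1\leq j\leq i$, say that $x_0$ is not the $(q+k)$-th iterate of any $(q,k)$-swift point for $k=j-1<i$, hence its swiftness type is not smaller than $i$; and the diagonal inequalities $H(j,j)$, $1\leq j\leq i+q-1$, are the non-degeneracy hypotheses of the analysis above — they keep the preimage sets $f_r^{-j}(x_0)$ nonempty (so that $x_{q+k}=y_k$ behaves as claimed and the winding condition $\owind(y_i,f_r(y_i),\dots,f_r^q(y_i))=p$ of Definition~\ref{def:qi-swift} is automatic), and, exactly as in Proposition~\ref{prop:levels}, they force the $i+q-1$ pairs of arcs whose sample-point counts produce the gaps $z_j,w_j$ to be pairwise disjoint.

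Granting the deterministic claim, the probabilistic conclusion follows word for word as at the end of the proof of Proposition~\ref{prop:levels}. Since $S_i(q)$ is a cone we have $\prob[u\in S_i(q)]=\prob[nu\in S_i(q)]$ for every $n$, and by Lemma~\ref{lem:S_i(q)} — the analog for $S_i(q)$ of Lemmas~\ref{lem:K_i} and~\ref{lem:K_i(q)} — conditioning on $S_i(q)$ makes the rescaled gaps asymptotically independent, so that $nu$ conditioned on $S_i(q)$ converges in distribution to $2i+2q-2$ independent exponential random variables and $\lim_{n\to\infty}\prob[nu\in S_i(q)]=\int_{S_i(q)}e^{-\|u\|_1}\,du$. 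Combined with the previous two paragraphs, this yields $sw_i(p/q)=\int_{S_i(q)}e^{-\|u\|_1}\,du$.

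The main obstacle is the deterministic claim: one must verify that the inequality list defining $S_i(q)$ is exactly the right one, neither over- nor under-counting. In particular one needs to check that the block of $\neg H$ inequalities faithfully encodes the minimality built into ``swiftness type'', that $H(i+1,i+q-1)$ together with the diagonal inequalities $H(j,j)$ genuinely certifies a bona fide $(q,i)$-swift witness (so the winding-number clause of Definition~\ref{def:qi-swift} is truly subsumed), and that no periodic point of swiftness type $i$ escapes the cone. This is a more delicate, index-shifted version of the case analysis already carried out for $K_i(q)$ in Proposition~\ref{prop:levels}; the asymptotic-independence input (Lemma~\ref{lem:S_i(q)}) is deferred to the appendix, just as there.
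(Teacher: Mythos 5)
Your proposal is correct and follows exactly the route the paper intends: the paper's own proof of this proposition is the one-line remark that it "follows from the proof technique of Proposition~\ref{prop:levels} and from Lemma~\ref{lem:S_i(q)}," and your expansion — identifying $H(i+1,i+q-1)$ as the certificate that $y_i$ is a $(q,i)$-swift witness, the $\neg H(j,j+q-2)$ block as the minimality of the swiftness type, the diagonal $H(j,j)$ inequalities as the disjointness/non-degeneracy conditions, and then invoking the cone property together with Lemma~\ref{lem:S_i(q)} — is precisely that technique transplanted from $K_i(q)$ to $S_i(q)$. The only place where your phrasing is slightly looser than the facts is the parenthetical claim that the winding clause of Definition~\ref{def:qi-swift} is ``automatic'' from the cone inequalities alone: it actually becomes automatic only after restricting to the event (which has probability tending to $1$) that all the gaps $z_j,w_j$ are small, exactly as the proof of Proposition~\ref{prop:levels} does when it conditions on the arcs being arbitrarily short.
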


We have established the limiting probabilities that a point is at level $i$ or of swiftness type $i$ in the forms of integrals. It now remains to compute these integrals explicitly, which will allow us to compute the expected fraction of periodic points. 

\section{Fraction of periodic points and Catalan numbers}
\label{sect:catalan}

In this section we use a bijective argument to compute the exponential integrals appearing in Propositions~\ref{prop:levels} and \ref{prop:swift-witness}.
Recall that a \emph{Dyck path} of order $i$ is a path in $\RR^2$ from $(0,0)$ to $(2i,0)$ with $i$ up-steps of the form $(1,1)$ and $i$ down-steps of the form $(1,-1)$, in which $y\geq 0$ for all points $(x,y)$ on the path. The number of Dyck paths of order $i$ is the Catalan number $C_i=\frac{1}{i+1}{2i\choose i}$. We need two generalizations of Catalan numbers:
\begin{itemize}
\item[(i)] $C_{i,h}$ is the number of paths starting at $(0,0)$ with $i$ up-steps and $i$ down-steps, such that $0\leq y\leq h$ for all $(x,y)$ on the path. These are Dyck paths of height bounded by $h$. Such paths end at $(2i,0)$.
\item[(ii)] $C'_{i,h}$ is the number of paths starting at $(0,0)$ with $i+h$ up-steps and $i$ down-steps, such that $0\leq y\leq h$ for all $(x,y)$ on the path. Such paths end at $(2i+h,h)$.
\end{itemize}

\begin{proposition}\label{prop:volumeDyck}
For $i\ge 0$ and $q\geq 2$ the exponential integrals over the cones in \eqref{eq:define-k} and \eqref{eq:define-s} are
\[ \int_{K_i}e^{-\|u\|_1}du=\frac{1}{2^{2i+1}}C_i, \quad  \int_{K_i(q)}e^{-\|u\|_1}du=\frac{1}{2^{2i+1}}C_{i,q-2},\quad \int_{S_i(q)}e^{-\|u\|_1}du=\frac{1}{2^{2i+q-1}}C'_{i,q-2}. \]
\end{proposition}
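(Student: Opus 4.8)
The plan is to evaluate each of the three exponential integrals by a change of variables that converts the cone inequalities (phrased in the partial sums of the $z_j$ and $w_j$) into a lattice-path counting problem. First I would introduce the prefix sums $Z_k = z_1 + \cdots + z_k$ and $W_k = w_1 + \cdots + w_k$. The inequalities $H(j,k)$ become simply $Z_j \ge W_k$, and the exponent $\|u\|_1 = Z_{i+1} + W_{i+1}$ (for $K_i$ and $K_i(q)$, with the obvious shift for $S_i(q)$). The natural next move is to merge the two increasing sequences $0 < Z_1 < Z_2 < \cdots$ and $0 < W_1 < W_2 < \cdots$ into a single increasing sequence of $2i+2$ values; recording at each position whether the new value is a $Z$ or a $W$ encodes exactly a lattice path with up-steps (for $W$'s, say) and down-steps (for $Z$'s), where the height after reading the first $m$ merged values equals (number of $W$'s so far) $-$ (number of $Z$'s so far). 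I would check that the constraint $Z_j \ge W_j$ for $j \le i$ together with $Z_{i+1} < W_{i+1}$ translates precisely into the path staying in the correct half-plane except for the final step (giving a Dyck path of order $i$ after a small reindexing), and that the extra constraints $\neg H(j, q+j-2)$ defining $K_i(q)$ — i.e. $Z_j < W_{q+j-2}$ — bound the height of the path by $q-2$. This is the combinatorial heart of the argument and where I expect the bookkeeping with the off-by-$(q-2)$ indices to require care.

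Once the cone is identified with a union of simplices indexed by lattice paths of a prescribed type, the integral splits as a sum over those paths. For each fixed path (i.e. fixed interleaving pattern of the $Z$'s and $W$'s), the region is an ordered simplex $0 < v_1 < v_2 < \cdots < v_{2i+2}$ in the merged coordinates, with the largest coordinate $v_{2i+2}$ being either $Z_{i+1}$ or $W_{i+1}$. I would compute $\int e^{-(Z_{i+1}+W_{i+1})} \, dv$ over such a simplex directly. The key observation is that the density only sees the two topmost-in-their-own-chain variables $Z_{i+1}$ and $W_{i+1}$, one of which is $v_{2i+2}$ and the other is $v_{2i+1}$ precisely when the path ends with one up and one down step (which it always does, since both chains have the same length $i+1$ and $Z_{i+1}$, $W_{i+1}$ are the respective maxima). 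So each simplex integral equals $\int_{0 < v_1 < \cdots < v_{2i+2}} e^{-(v_{2i+1}+v_{2i+2})}\, dv$, a fixed number independent of the path; a short calculation (integrating out $v_1 < \cdots < v_{2i}$ below $v_{2i+1}$ gives $v_{2i+1}^{2i}/(2i)!$, then $\int_0^\infty\!\int_{v_{2i+1}}^\infty \frac{v_{2i+1}^{2i}}{(2i)!} e^{-v_{2i+1}-v_{2i+2}}\, dv_{2i+2}\, dv_{2i+1}$) yields $\frac{1}{2^{2i+1}}$. Multiplying by the number of admissible paths gives $\frac{1}{2^{2i+1}} C_i$, respectively $\frac{1}{2^{2i+1}} C_{i,q-2}$.

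For the third integral over $S_i(q) \subseteq \RR^{2i+2q-2}_+$ the same scheme applies with $i+q-1$ in place of $i+1$: the inequalities $H(j,j)$ for $j \le i+q-1$ force the merged path to stay weakly above the axis, $\neg H(j, q+j-2)$ for $j \le i$ cap the height at $q-2$, and the final constraint $H(i+1, i+q-1)$, i.e. $Z_{i+1} \ge W_{i+q-1}$, forces the path to reach height $q-2$ at the appropriate point and stay there — matching the description of the paths counted by $C'_{i,q-2}$ (which end at height $h=q-2$ with $i+h$ up-steps and $i$ down-steps). The per-path simplex integral is now over $2i+2q-2$ ordered variables with weight $e^{-(Z_{i+1}+W_{i+q-1})}$; here the two weighted variables are again the two largest, so the integral evaluates to $\frac{1}{2^{2i+q-1}}$ by the same computation with $2i+q-1$ in the exponent of the denominator, and multiplying by $C'_{i,q-2}$ finishes it. The main obstacle throughout is purely organizational: verifying that the translation "interleaving of prefix sums $\leftrightarrow$ lattice path" sends each defining inequality of $K_i$, $K_i(q)$, $S_i(q)$ to exactly the right path constraint, including the boundary cases $i \le q-2$ (where $K_i(q) = K_i$ and indeed $C_{i,q-2} = C_i$ once $q-2 \ge i$) and the degenerate coincidences $Z_j = W_k$, which form a measure-zero set and can be ignored.
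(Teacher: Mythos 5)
Your plan is essentially the paper's proof: pass to prefix sums $s_j = z_1 + \cdots + z_j$, $t_j = w_1 + \cdots + w_j$ (unit Jacobian), decompose the resulting cone into simplicial pieces indexed by total orderings of the $s$'s and $t$'s interleaved, identify each ordering with a lattice path, and multiply the number of admissible paths by the value of the per-simplex integral. For $K_i$ and $K_i(q)$ your account is correct and matches the paper (including the identification of $\int_{0<v_1<\cdots<v_{2i+2}} e^{-(v_{2i+1}+v_{2i+2})}\,dv = \tfrac{1}{2^{2i+1}}$, which is the paper's Lemma~\ref{lem:integral} with $k=2i+2$, $m=2i+1$).

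However, your treatment of $S_i(q)$ has a genuine gap. First, the exponential weight is $\|u\|_1 = Z_{i+q-1} + W_{i+q-1}$, not $Z_{i+1} + W_{i+q-1}$ as you wrote. More seriously, your justification ``the two weighted variables are again the two largest'' is false when $q>2$: the constraint $H(i+1,i+q-1)$, i.e.\ $Z_{i+1}\ge W_{i+q-1}$, together with the monotonicity $Z_{i+1}\le\cdots\le Z_{i+q-1}$, forces $W_{i+q-1}$ to sit \emph{below} all of $Z_{i+1},\ldots,Z_{i+q-1}$ in the merged order --- so $W_{i+q-1}$ occupies rank $2i+q-1$ out of $2i+2q-2$, with $q-1$ of the $Z$-variables above it, not rank $2i+2q-3$. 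If the two weighted variables really were the two largest, the per-simplex integral would be $\tfrac{1}{2^{2i+2q-3}}$, which contradicts the $\tfrac{1}{2^{2i+q-1}}$ you (correctly) assert. The fix is to note that the $q$ largest variables in every admissible ordering are $W_{i+q-1}\le Z_{i+1}\le\cdots\le Z_{i+q-1}$, so the integral is of type $I(2i+2q-2,\, 2i+q-1) = \tfrac{1}{2^{2i+q-1}}$; and the remaining $i$ variables $s_1,\ldots,s_i$ and $i+q-2$ variables $t_1,\ldots,t_{i+q-2}$, subject to $s_j\ge t_j$ and $s_j<t_{j+q-2}$, admit exactly $C'_{i,q-2}$ total orderings (paths with $i+q-2$ up-steps, $i$ down-steps, height in $[0,q-2]$). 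Your description of the path ``reaching height $q-2$ and staying there'' is also not quite what happens; only the height bound and the surplus of $q-2$ up-steps matters.
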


The $K_i$ case can be proven using the ideas and techniques of \cite[Appendix B]{cantarella2013symplectic} and the references within. In order to handle all three cases of $K_i$, $K_i(q)$, and $S_i(q)$ simultaneously, we combinatorially split these cones into $C_i$ (resp.\ $C_{i,q-2}$ and $C'_{i,q-2}$) smaller parts. We compute the integral over each part using the following lemma, verified by a straightforward computation.

\begin{lemma}\label{lem:integral}
For any $k>m\geq 1$ we have
$ I(k,m):= \int_{0\leq t_1\leq\cdots\leq t_k} e^{-t_{m}-t_k}\ dt_1 \cdots dt_k= \frac{1}{2^{m}}. $
\end{lemma}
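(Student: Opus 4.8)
The plan is to evaluate $I(k,m)$ by peeling off, from the ordered simplex $0\leq t_1\leq\cdots\leq t_k$, exactly those coordinates that do not occur in the integrand. Since $e^{-t_m-t_k}$ depends only on $t_m$ and $t_k$, I would first integrate out the block $t_1\leq\cdots\leq t_{m-1}$ constrained to $[0,t_m]$ and the block $t_{m+1}\leq\cdots\leq t_{k-1}$ constrained to $[t_m,t_k]$. Each is an ordered simplex whose volume is elementary:
\[ \int_{0\leq t_1\leq\cdots\leq t_{m-1}\leq t_m}dt_1\cdots dt_{m-1}=\frac{t_m^{m-1}}{(m-1)!},\qquad \int_{t_m\leq t_{m+1}\leq\cdots\leq t_{k-1}\leq t_k}dt_{m+1}\cdots dt_{k-1}=\frac{(t_k-t_m)^{k-1-m}}{(k-1-m)!}. \]
The hypothesis $k>m\geq 1$ is precisely what makes both exponents nonnegative; in the degenerate cases $m=1$ or $k=m+1$ the corresponding block is empty and contributes the factor $1$. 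Because the integrand is nonnegative, Tonelli's theorem justifies carrying out the integration in this order.

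This reduces the problem to a two-dimensional integral, which I would then diagonalize. Explicitly,
\[ I(k,m)=\int_0^\infty\int_{t_m}^\infty\frac{t_m^{m-1}}{(m-1)!}\cdot\frac{(t_k-t_m)^{k-1-m}}{(k-1-m)!}\,e^{-t_m-t_k}\,dt_k\,dt_m, \]
and substituting $s=t_k-t_m$ turns the region into the full quadrant $t_m,s\geq 0$ while $e^{-t_m-t_k}=e^{-2t_m}e^{-s}$, so the integral factors as
\[ I(k,m)=\left(\int_0^\infty\frac{t_m^{m-1}}{(m-1)!}\,e^{-2t_m}\,dt_m\right)\left(\int_0^\infty\frac{s^{k-1-m}}{(k-1-m)!}\,e^{-s}\,ds\right). \]
The second factor equals $\frac{1}{(k-1-m)!}\,\Gamma(k-m)=1$. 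For the first factor, the change of variables $u=2t_m$ gives $\frac{1}{2^m}\cdot\frac{1}{(m-1)!}\int_0^\infty u^{m-1}e^{-u}\,du=\frac{1}{2^m}$. Multiplying the two factors yields $I(k,m)=\frac{1}{2^m}$, as claimed.

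There is essentially no genuine obstacle here: the computation is routine once the simplex is split in the right way. The only points warranting care are the bookkeeping of the empty blocks in the boundary cases $m=1$ and $k=m+1$, and recording the two elementary ingredients — the ordered-simplex volume formula and the Gamma-integral evaluations $\int_0^\infty u^{j}e^{-u}\,du=j!$ — since these are what the argument rests on. I would state these explicitly and let the result drop out.
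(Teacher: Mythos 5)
Your proof is correct. The paper does not actually write out a proof of this lemma; it simply remarks that the identity is ``verified by a straightforward computation,'' so there is no paper argument to compare against in detail. Your decomposition---integrating out the ordered blocks $t_1\leq\cdots\leq t_{m-1}$ and $t_{m+1}\leq\cdots\leq t_{k-1}$ using the simplex volume $\frac{x^j}{j!}$, then diagonalizing via $s=t_k-t_m$ so the double integral factors into $\frac{1}{2^m}\cdot 1$---is exactly the kind of routine computation the authors intended, and your explicit handling of the degenerate cases $m=1$ and $k=m+1$ (empty blocks contributing factor $1$) tidies up the bookkeeping cleanly. The Gamma-function evaluations and the factorization after the substitution are all correct.
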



\begin{figure}
\centering
 	\def\svgwidth{1.0\textwidth}
	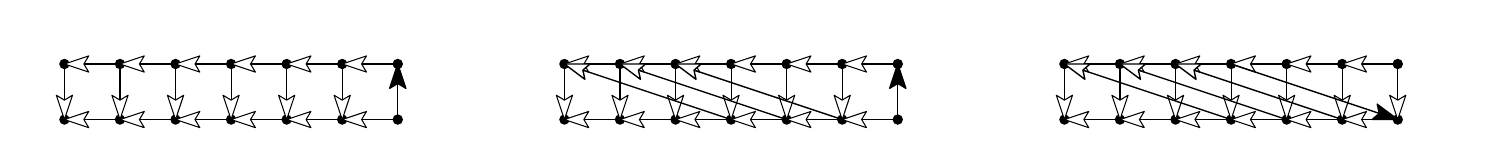
\caption{Three types of posets whose linear extensions are counted by Catalan numbers $C_i$, $C_{i,q-2}$ and $C'_{i,q-2}$, respectively, in the proof of Proposition~\ref{prop:volumeDyck}. Black arrow tips emphasize a relation opposite to the preceding ones. }
\label{fig:ladder}
\end{figure}

\begin{proof}[Proof of Proposition~\ref{prop:volumeDyck}]
We need to compute
$\int_{K_i} e^{-\|u\|_1}\ du = \int_{K_i} e^{-\sum_{j=1}^{i+1}z_j-\sum_{j=1}^{i+1}w_j}\ dz_1\cdots dw_{i+1}.$
The linear substitution
\begin{eqnarray*}
s_1=z_1,\ s_2=z_1+z_2,\ \ldots,\ s_{i+1}=z_1+\cdots+z_{i+1},\\
t_1=w_1,\ t_2=w_1+w_2,\ \ldots,\ t_{i+1}=w_1+\cdots+w_{i+1},
\end{eqnarray*}
whose determinant is one, reduces this integral to $\int_{K_i'}e^{-s_{i+1}-t_{i+1}}\ ds_1\cdots dt_{i+1}$, where $K_i'$ is the cone determined by the inequalities
\begin{equation}\label{eq:orderings}
\begin{aligned}
0\leq s_1\leq\cdots\leq s_{i+1},\quad
0\leq t_1\leq\cdots\leq t_{i+1},\quad 
s_1\geq t_1,\ \ldots,\ s_i\geq t_i,\quad  s_{i+1}< t_{i+1}.
\end{aligned}
\end{equation}
These relations are depicted in Figure~\ref{fig:ladder}(a). Consider all possible ways to impose a total order on the variables $s_i$, $t_i$ satisfying \eqref{eq:orderings}; necessarily $s_{i+1}\leq t_{i+1}$ are the two largest. The remaining variables $s_1,\ldots,s_i,t_1,\ldots,t_i$ can be ordered in $C_i$ ways, which one can see by thinking of the $t_j$ as up-steps and of the $s_j$ as down-steps.\footnote{One of the (many) standard interpretations of $C_i$ is the number of linear extensions of the ladder poset $2\times i$ \cite[Ex. 6.19.(aaa)]{stanley1999enumerative}.
} Therefore $K_i'$ splits into $C_i$ smaller cones, one for each total ordering of $2i+2$ variables, and the integral over each is of the form $I(2i+2,2i+1)$. It follows that $\int_{K_i}e^{-\|u\|_1}du=\frac{1}{2^{2i+1}}C_i$.

The computation for $K_i(q)$ proceeds similarly, but from \eqref{eq:define-k} we get additional constraints $s_j< t_{j+q-2}$ for $1\le j\le i-q+2$. The number of total orderings (Figure~\ref{fig:ladder}(b)) is now $C_{i,q-2}$, giving $\int_{K_i(q)}e^{-\|u\|_1}du=\frac{1}{2^{2i+1}}C_{i,q-2}$.

The same substitution translates the inequalities \eqref{eq:define-s} defining $S_i(q)$ into the constraints
\begin{equation}\label{eq:orderings2}
\begin{aligned}
0\leq s_1\leq\cdots\leq s_{i+q-1},\quad 
0\leq t_1\leq\cdots\leq t_{i+q-1},\quad
s_1\geq t_1,\ \ldots,\ s_{i+q-1}\geq t_{i+q-1}\\
s_1< t_{q-1},\ \ldots,\ s_i< t_{i+q-2},\quad s_{i+1}\geq t_{i+q-1}.
\end{aligned}
\end{equation}
In every total order compatible with \eqref{eq:orderings2} (Figure~\ref{fig:ladder}(c)) the biggest variables are $t_{i+q-1}\leq s_{i+1}\leq \cdots\leq s_{i+q-1}$, and the other variables can be ordered in $C'_{i,q-2}$ ways (there remain $i$ of the $s$ and $i+q-2$ of the $t$ variables, with constraints of the form $s_j< t_{j+q-2}$). The integral of $e^{-s_{i+q-1}-t_{i+q-1}}$ over each resulting subcone is of type $I(2i+2q-2,2i+q-1)$ and so the total integral is $\frac{1}{2^{2i+q-1}}C'_{i,q-2}$.
\end{proof}

\begin{theorem}\label{thm:levels}
For $0<r\leq 1$ and $i\geq 0$, the expected fraction of points at level $i$ is
\[ \ell_i(r)=\lim_{n\to\infty}\frac{\expect[\numlev_i(X_n,r)]}{n} = \begin{cases}
\frac{1}{2^{2i+1}}C_i &\mbox{if }r\notin\QQ,\\
\frac{1}{2^{2i+1}}C_{i,q-2} &\mbox{if }r=\frac{p}{q}.
\end{cases} \]
\end{theorem}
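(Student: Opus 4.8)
The plan is to combine the two results already established in the preceding sections. Proposition~\ref{prop:levels} identifies the limiting fraction $\ell_i(r)$ with an exponential integral over a convex cone: over $K_i$ when $r$ is irrational, and over $K_i(q)$ when $r=\frac pq$. Proposition~\ref{prop:volumeDyck} then evaluates exactly these two integrals, yielding $\int_{K_i}e^{-\|u\|_1}\,du=\frac{1}{2^{2i+1}}C_i$ and $\int_{K_i(q)}e^{-\|u\|_1}\,du=\frac{1}{2^{2i+1}}C_{i,q-2}$. Substituting these values into the conclusion of Proposition~\ref{prop:levels} gives the two cases of the theorem verbatim.

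The only point worth remarking on is the behaviour for small $q$. When $i\le q-2$ the second block of inequalities in \eqref{eq:define-k} defining $K_i(q)$ is vacuous, so $K_i(q)=K_i$; correspondingly, every Dyck path of order $i$ automatically has height at most $i\le q-2$, so $C_{i,q-2}=C_i$, and the rational and irrational formulas agree for these $i$. This matches the intuition that, at a fixed level $i$, a rational $r=\frac pq$ with $q$ large cannot be distinguished from a nearby irrational, since the extra periodicity-forcing configurations excluded in passing from $K_i$ to $K_i(q)$ only appear once $i\ge q-1$. (For completeness one should note the degenerate case $q\le 2$: the convention $C_{i,h}=0$ when $h<0$ together with the observation that $K_i(1)=\emptyset$ makes both sides vanish, consistent with the fact that $f_1$ is a bijection and hence has no non-periodic points.)

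Since the statement is thus a formal consequence of Propositions~\ref{prop:levels} and~\ref{prop:volumeDyck}, there is no genuine obstacle left in proving it; the substantive content lies entirely in those two propositions. If one wanted to point to the delicate step upstream, it is the asymptotic-independence argument behind Proposition~\ref{prop:levels} --- encapsulated in Lemmas~\ref{lem:K_i} and~\ref{lem:K_i(q)} --- which legitimizes replacing the dependent gap variables $z_j,w_j$ by i.i.d.\ exponentials after restricting to the cone $K_i(q)$, together with the bijective splitting of these cones into $C_i$ (resp.\ $C_{i,q-2}$) simplices carried out in the proof of Proposition~\ref{prop:volumeDyck}.
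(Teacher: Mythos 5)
Your proposal matches the paper's proof exactly: Theorem~\ref{thm:levels} is stated in the paper to ``follow immediately from Propositions~\ref{prop:levels} and \ref{prop:volumeDyck},'' precisely the combination you carry out. The extra sanity checks you add for $i\le q-2$ and for the degenerate cases $q\le 2$ are correct (the paper records the first observation separately in a remark) but are not needed for the deduction itself.
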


\begin{proof}
This follows immediately from Propositions~\ref{prop:levels} and \ref{prop:volumeDyck}.
\end{proof}

\begin{remark}
If $i\le q-2$, then $C_{i,q-2}=C_i$ and hence $\ell_i(\frac{p}{q})=\ell_i(r)$ for any irrational $r$.
 Also, for any $r<1$, the expected fraction of points at level $0$ (i.e.\ not in the image of $f_r$) as $n\to\infty$ is $\ell_0(r)=\frac12$.
\end{remark}
\begin{remark}
We have $\ell_0(\frac12)=\frac12$, and for $i>0$ we have $C_{i,0}=0$ and hence $\ell_i(\frac12)=0$. This is a special feature of $r=\frac12$ not related to randomness. We leave it as an exercise that if $X\subseteq S^1$ is a finite set without antipodal pairs then $\numlev_i(X,\frac12)=0$ for $i>0$, i.e.\ every point of the system $f_{1/2}\colon X\to X$ is either periodic or at level $0$.
\end{remark}

The generating function of Catalan numbers is $C(x):=\sum_{i=0}^\infty C_i x^i=\frac{2}{1+\sqrt{1-4x}}$. This well-known fact follows from the recurrence relation $C_0=1$ and $C_{n+1}=\sum_{j=0}^{n} C_j C_{n-j}$. The recurrence relations for $C_{i,h}$ and $C'_{i,h}$ can be derived with similar arguments, yielding $C_{0,h}=1,\ C_{n+1,h}=\sum_{j=0}^n C_{j,h-1}C_{n-j,h}$ and $C'_{0,h}=1,\ C'_{n+1,h}=\sum_{j=0}^{n+1}C'_{j,h-1}C_{n+1-j,h}$, $n\geq 0$.
By standard manipulations we get the following relations between the generating functions $C_h(x):=\sum_{i=0}^\infty C_{i,h}x^i$ and $C'_h(x):=\sum_{i=0}^\infty C'_{i,h}x^i$:
\begin{equation}
\label{eq:recu-gen}
C_0(x)=C'_0(x)=1,\quad C_h(x)=\frac{1}{1-xC_{h-1}(x)},\quad C'_h(x)=C'_{h-1}(x)C_h(x),\quad h\geq 1.
\end{equation}
The proof of \cite[Theorem~2]{Krattenthaler2001} gives a closed form
$C_h(x)=U_{h}(\frac{1}{2\sqrt{x}})/\big(\sqrt{x}U_{h+1}(\frac{1}{2\sqrt{x}})\big)$
where $U_h(x)$ are the Chebyshev polynomials of the second kind defined by $U_0(x)=1$, $U_1(x)=2x$, and $U_h(x)=2xU_{h-1}(x)-U_{h-2}(x)$ for $h\ge2$. From there, or directly from \eqref{eq:recu-gen}, one proves by induction that
\begin{equation*}
\label{eq:14}
C(\tfrac{1}{4})=2,\quad C_h(\tfrac{1}{4})=2\tfrac{h+1}{h+2},\quad C'_h(\tfrac{1}{4})=2^{h+1}\tfrac{1}{h+2},\quad h\geq 0.
\end{equation*}
Propositions~\ref{prop:levels}, \ref{prop:swift-witness}  together with \ref{prop:volumeDyck} and the above equations now imply
\begin{equation}
\label{eq:sums}
\begin{aligned}
\sum_{i\geq 0} \ell_i(r) &=
\begin{cases}
 \sum_{i\geq 0} \frac{1}{2^{2i+1}}C_i = \tfrac12 C(\tfrac14)=1 & \mbox{if}\ r\notin\QQ, \\
 \sum_{i\geq 0} \frac{1}{2^{2i+1}}C_{i,q-2} = \tfrac12 C_{q-2}(\tfrac14)=\frac{q-1}{q} & \mbox{if}\ r=\tfrac{p}{q}, \\
 \end{cases}\\
\sum_{i\geq 0} sw_i(\tfrac{p}{q}) &= \sum_{i\geq 0}\tfrac{1}{2^{2i+q-1}}C'_{i,q-2}=\tfrac{1}{2^{q-1}}C'_{q-2}(\tfrac{1}{4})=\tfrac{1}{q}.
\end{aligned}
\end{equation}

\begin{theorem}\label{thm:fraction-periodic-points}
The expected fraction of periodic points is $\lim_{n\to\infty}\frac{\expect[\per(X_n,r)]}{n}=\begin{cases}
0 &\mbox{if }r\notin\QQ,\\
\frac{1}{q} &\mbox{if }r=\frac{p}{q}.
\end{cases} $
\end{theorem}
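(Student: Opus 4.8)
The plan is to use the identity
$\per(X_n,r)=|X_n|-\sum_{i\ge0}\numlev_i(X_n,r)=n-\sum_{i\ge0}\numlev_i(X_n,r)$
from Definition~\ref{def:level}, divide by $n$, and take the limit as $n\to\infty$, exchanging the limit with the (infinite) sum over levels. Granting that exchange, Theorem~\ref{thm:levels} gives
$\lim_{n\to\infty}\frac{\expect[\per(X_n,r)]}{n}=1-\sum_{i\ge0}\ell_i(r)$,
and the value of the right-hand side is exactly what is recorded in the first display of \eqref{eq:sums}: when $r\notin\QQ$ the sum $\sum_{i\ge0}\frac{1}{2^{2i+1}}C_i$ equals $\tfrac12 C(\tfrac14)=1$, so the fraction of periodic points is $1-1=0$; when $r=\tfrac pq$ the sum $\sum_{i\ge0}\frac{1}{2^{2i+1}}C_{i,q-2}$ equals $\tfrac12 C_{q-2}(\tfrac14)=\tfrac{q-1}{q}$, so the fraction of periodic points is $1-\tfrac{q-1}{q}=\tfrac1q$. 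The computation of these two Catalan-type generating-function sums at $x=\tfrac14$ has already been carried out in the displayed equations preceding the theorem (using $C(\tfrac14)=2$ and $C_h(\tfrac14)=2\tfrac{h+1}{h+2}$), so nothing new is needed there.

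The one genuine subtlety is justifying the interchange of $\lim_{n\to\infty}\expect[\cdot]$ with $\sum_{i\ge0}$, i.e.\ passing from $\frac1n\expect[\per(X_n,r)]=1-\sum_{i\ge0}\frac1n\expect[\numlev_i(X_n,r)]$ to $1-\sum_{i\ge0}\ell_i(r)$. I would handle this by a uniform tail bound: for every $n$ and every $i$ we have the crude pointwise estimate $\numlev_i(X_n,r)\le n$, and more importantly a point at level $i$ lies in $f_r^i(X_n)$, so $\sum_{j\ge i}\numlev_j(X_n,r)=|f_r^i(X_n)\setminus(\text{periodic})|\le |f_r^i(X_n)|$. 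One then argues that $\frac1n\expect[|f_r^i(X_n)|]$ decays (at least) geometrically in $i$, uniformly in $n$ for $n$ large: indeed the example in Section~\ref{SS:example12} shows $\frac1n\expect[|f_r(X_n)|]\to\tfrac12$, and iterating the same local, asymptotic-independence argument shows $\frac1n\expect[|f_r^i(X_n)|]$ is bounded above by something like $2^{-i}(1+o(1))$. This gives a dominating tail $\sum_{j\ge i}\frac1n\expect[\numlev_j(X_n,r)]\le 2^{-i}(1+o(1))$ that is summable and independent of $n$, so dominated convergence (for series) applies and the interchange is legitimate. Alternatively, since each partial sum $\sum_{i=0}^{N}\frac1n\expect[\numlev_i(X_n,r)]$ converges to $\sum_{i=0}^N\ell_i(r)$ as $n\to\infty$ by Theorem~\ref{thm:levels}, one can take $N\to\infty$ afterwards using the monotonicity $\sum_{i=0}^N\ell_i(r)\le\sum_{i\ge0}\ell_i(r)<\infty$ together with the uniform tail estimate to control the error $\sum_{i>N}\frac1n\expect[\numlev_i(X_n,r)]$.

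I expect the dominated-convergence / uniform-tail step to be the only real obstacle; everything else is bookkeeping. In fact, depending on how much machinery the paper wants to invoke, one could even sidestep a delicate tail estimate by observing that $\sum_{i\ge0}\numlev_i(X_n,r)\le n$ deterministically, so $1-\sum_{i=0}^N\frac1n\expect[\numlev_i(X_n,r)]\ge\frac1n\expect[\per(X_n,r)]\ge 0$ for every $N$; letting $n\to\infty$ first (for fixed $N$) and then $N\to\infty$ yields $\limsup_n\frac1n\expect[\per(X_n,r)]\le 1-\sum_{i\ge0}\ell_i(r)$, and the matching lower bound in the rational case comes from Proposition~\ref{prop:swift-witness}: the periodic points of swiftness type $i$ are genuinely periodic, so $\frac1n\expect[\per(X_n,r)]\ge\sum_{i=0}^N sw_i(\tfrac pq)\to\tfrac1q$ by the last line of \eqref{eq:sums}, forcing $\liminf_n\frac1n\expect[\per(X_n,r)]\ge\tfrac1q=1-\tfrac{q-1}{q}$. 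For irrational $r$ the upper bound already gives $\le 0$, and periodicity of at least one orbit (or simply nonnegativity) gives the trivial lower bound $\ge 0$, so the limit is $0$. This two-sided squeeze avoids any explicit interchange-of-limit justification and is, I think, the cleanest route.
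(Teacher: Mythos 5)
Your final ``two-sided squeeze'' route is exactly the paper's proof: use $\per(X_n,r)=n-\sum_{i\ge0}\numlev_i(X_n,r)$ with the finite partial sums over levels to get $\limsup_n\frac1n\expect[\per(X_n,r)]\le 1-\sum_{i\ge0}\ell_i(r)$, and in the rational case use $\per(X_n,\frac pq)\ge\sum_{i\ge0}\swi_i(X_n,\frac pq)$ together with Proposition~\ref{prop:swift-witness} and \eqref{eq:sums} to get the matching $\liminf\ge\frac1q$; the irrational case closes with $\per\ge0$. This is correct and is the argument the paper gives.

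One caution about the route you considered and then set aside: the proposed dominating tail bound $\frac1n\expect\bigl[|f_r^i(X_n)|\bigr]\le 2^{-i}(1+o(1))$ is false. The limiting fraction of points at level $\ge i$ is $\sum_{j\ge i}\frac{C_j}{2\cdot 4^{\,j}}$, and since $C_j/4^j\sim (\pi j^3)^{-1/2}$ this tail decays only like $i^{-1/2}$, not geometrically; similarly $\frac1n\expect[|f_r^2(X_n)|]\to 3/8$, not $1/4$. Iterating the level-$0$ argument does not halve the image at each step because the iterated images are no longer uniform samples. The squeeze argument is not merely cleaner; it is the one that actually works without a delicate uniform-in-$n$ tail estimate.
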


\begin{proof}
For any finite $X\subseteq S^1$ we have $\per(X,r)=|X|-\sum_{i\geq 0}\numlev_i(X,r)$ and also (Section~\ref{SS:qswiftcount}) $\per(X,\frac{p}{q})\geq\sum_{i\geq 0}\swi_i(X,\frac{p}{q})$. Hence the result follows from \eqref{eq:sums} and the bounds 
\[ \lim_{n\to\infty}\frac{1}{n}\expect[\per(X_n,r)]\leq 1-\sum_{i\geq 0}\ell_i(r),\quad \lim_{n\to\infty}\frac{1}{n}\expect[\per(X_n,\tfrac{p}{q})]\geq \sum_{i\geq 0} sw_i(\tfrac{p}{q}). \]
\end{proof}

\begin{theorem}
\label{thm:oneorbit}
If $r=\frac{p}{q}$ then asymptotically almost surely $f_r:X_n\to X_n$ has a single periodic orbit, with winding number $w_n$ and length $\olen_n$ satisfying $\olen_n p-w_nq=1$.
\end{theorem}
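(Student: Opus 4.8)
The plan is to bootstrap from the machinery already in place, especially Lemma~\ref{lem:qi-swift} and Proposition~\ref{prop:swift-witness}. The key observation is that Lemma~\ref{lem:qi-swift} already tells us the right answer structurally: \emph{if} the system $f_r\colon X_n\to X_n$ contains any $(q,i)$-swift point for some $i\ge0$, then it has a single periodic orbit, and moreover the winding number $w$ and length $\olen$ of that orbit satisfy $\olen p - wq = 1$. So the entire theorem reduces to the probabilistic statement: as $n\to\infty$, the probability that $f_r\colon X_n\to X_n$ contains at least one $(q,i)$-swift point (for some $i$) tends to $1$.

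For this, I would argue via Proposition~\ref{prop:swift-witness} together with \eqref{eq:sums}: the expected fraction of periodic points of swiftness type $i$ is $sw_i(p/q)$, and $\sum_{i\geq 0} sw_i(p/q) = \frac{1}{q} > 0$. In particular, for some fixed $i$ the quantity $sw_i(p/q)$ is positive, so $\expect[\swi_i(X_n,\tfrac{p}{q})]$ grows linearly in $n$; a fortiori the expected \emph{number} of $(q,i)$-swift points is $\Omega(n)$ for that $i$. To upgrade ``positive expectation'' to ``exists asymptotically almost surely,'' I would compute (or bound) the second moment of the number of $(q,i)$-swift points and apply the second-moment method: a $(q,i)$-swift point is a ``local'' event in the sense of Section~\ref{sect:prob} (it is determined by the gap statistics $z_j, w_j$ near a single seed point $x_0$), so two $(q,i)$-swift certificates seeded at well-separated points of $X_n$ are asymptotically independent, which gives $\expect[\swi_i^2] = (1+o(1))\,\expect[\swi_i]^2 + O(\expect[\swi_i])$, and hence by Chebyshev $\prob[\swi_i(X_n,\tfrac{p}{q}) \ge 1]\to 1$. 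Once there is a $(q,i)$-swift point a.a.s., Lemma~\ref{lem:qi-swift} delivers both conclusions of the theorem: a single periodic orbit, and $\olen_n p - w_n q = 1$.

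An alternative and perhaps cleaner route for the ``single orbit'' half avoids the second moment entirely: Lemma~\ref{lem:same-orbits} says all periodic orbits share a length $\olen$ and winding number $w$, and the core is isomorphic to $\mathrm{Reg}_{\olen}^{w}$ with $d = \gcd(\olen,w)$ orbits; since $\gcd(\olen,w)=1$ automatically, the number of orbits is always $1$ for \emph{any} $X$ in which the winding fraction has the reduced form arising here — wait, that is not quite automatic, so the honest statement is that one still needs a $(q,i)$-swift certificate (or equivalent) to pin down that $\olen p - wq = 1$ rather than $\olen p - wq = $ some larger integer. So I would keep the swift-point argument as the backbone and use it for both parts simultaneously.

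The main obstacle I anticipate is the second-moment computation: one must show that the covariance between the indicator that $x_0$ is a base of a swiftness-type-$i$ periodic point and the analogous indicator for $x_0'$ is negligible when summed over all ordered pairs, i.e.\ that the dominant contribution to $\expect[\swi_i^2]$ comes from pairs $(x_0,x_0')$ whose associated arcs (of total length $O(1/n)$ around each seed, as in Figure~\ref{fig:variables}) are disjoint, and that the ``collision'' pairs contribute only $O(n)$. This is the same asymptotic-independence phenomenon already invoked in the proof of Proposition~\ref{prop:levels} and formalized in Appendix~\ref{append:independence} (Lemmas~\ref{lem:K_i} and \ref{lem:K_i(q)}), extended from one seed to two; the extension is routine but slightly tedious, and it is where the real work lies. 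Everything else — the structural consequences — is immediate from Lemma~\ref{lem:qi-swift}.
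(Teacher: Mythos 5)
Your approach matches the paper's, which simply restricts to $i=0$: by Propositions~\ref{prop:swift-witness} and \ref{prop:volumeDyck}, a sampled point is $q$-swift (i.e.\ $(q,0)$-swift) with probability tending to $sw_0(p/q)=\tfrac{1}{2^{q-1}}C'_{0,q-2}=\tfrac{1}{2^{q-1}}>0$, so a.a.s.\ some point of $X_n$ is $q$-swift, and then Lemma~\ref{lem:q-swift} delivers both the single orbit and the identity $\olen_n p-w_nq=1$. Fixing $i=0$ is slightly cleaner than your version, since $sw_0>0$ is explicit rather than extracted from $\sum_i sw_i(\tfrac{p}{q})=\tfrac1q>0$. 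The step you flag as the real work --- passing from a positive limiting per-point probability to a.a.s.\ existence of at least one swift point --- is exactly where the published proof is terse: it asserts the conclusion directly from the per-point probability without exhibiting the second-moment bound or an explicit ``probe $k$ well-separated seeds'' argument. Your instinct that this requires a two-seed extension of the asymptotic-independence machinery of Appendix~\ref{append:independence} is sound, and that computation is the content that would make the step airtight.
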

\begin{proof}
By Propositions~\ref{prop:swift-witness} and \ref{prop:volumeDyck}, a point of $X_n$ is $q$-swift with probability approaching $\frac{1}{2^{q-1}}C'_{0,q-2}=\frac{1}{2^{q-1}}$. So asymptotically almost surely $X_n$ has a $q$-swift point and hence Lemma~\ref{lem:q-swift} applies.
\end{proof}

\section{The number of periodic points for $r$ irrational}
The results so far give a fairly complete understanding of the asymptotic dynamics of $f_r:X_n\to X_n$ when $r=\frac{p}{q}$. When $r\notin\QQ$ we lack a notion corresponding to $q$-swiftness, and we don't yet have a precise estimate of the number of periodic points other than the $o(n)$ bound of Theorem~\ref{thm:fraction-periodic-points}. In this section we take a first step in that direction.

\begin{definition}
The irrationality exponent of a real number $r$ is the supremum of the set of real numbers $\alpha$ for which the inequality $0<|r-\frac{p}{q}|<\frac{1}{q^\alpha}$ has infinitely many integer solutions $p,q$.
\end{definition}
The irrationality exponent of almost all real numbers (that is, all except a set of Lebesgue measure zero) is $2$ \cite[Theorem E.3]{bugeaud2012distribution}.
 
In Section~\ref{SS:Definitions-and-basic-facts} we defined the winding fraction $\wf(X,r)$ and observed that $\wf(X,r)< r$. One would expect that $\wf(X_n,r)$ approaches $r$ when $n\to\infty$, and the next lemma makes that quantitative.

\begin{lemma}\label{lem:wf-lower-bound}
For $0<r\le1$ we have $\prob[\wf(X_n,r)\geq r-\frac{2\ln n}{n}]\geq 1-\frac{1}{n}$.
\end{lemma}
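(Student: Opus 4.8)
The plan is to find, with high probability, a long stretch of $X_n$ on which $f_r$ behaves like a translation by nearly $r$, and to conclude from this that the winding fraction is close to $r$. Recall that $\wf(X_n,r)< r$ always, and that $\wf(X_n,r)=\lim_{m\to\infty}\frac{1}{m}\sum_{i=0}^{m-1}\ddist(f_r^i(x),f_r^{i+1}(x))$ for any $x\in X_n$, so the winding fraction is exactly the average step length along a periodic orbit. Thus it suffices to exhibit a periodic orbit, or more conveniently a single point $x$, whose forward orbit takes long steps on average; since the orbit is eventually periodic this average is controlled by the periodic part.

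First I would show that with probability at least $1-\frac1n$ every arc of $S^1$ of length $\frac{2\ln n}{n}$ contains at least one point of $X_n$. This is a standard covering estimate: cover $S^1$ by $\lceil n/\ln n\rceil$ arcs of length $\le \frac{\ln n}{n}$, note that an empty arc of length $\frac{2\ln n}{n}$ forces two consecutive such arcs to be empty, and a fixed arc of length $\frac{\ln n}{n}$ is empty with probability $(1-\frac{\ln n}{n})^n\le e^{-\ln n}=\frac1n$; a union bound over $O(n/\ln n)$ arcs gives the claim (one may have to be slightly careful with constants, e.g.\ use length $\frac{2\ln n}{n}$ directly and union-bound over $n$ arcs, each empty with probability $(1-\frac{2\ln n}{n})^n \le n^{-2}$, giving failure probability $\le \frac1n$). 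Call this good event $E_n$.

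On the event $E_n$, for every $x\in X_n$ the open arc $[x+r-\frac{2\ln n}{n},\,x+r)_{S^1}$ contains a point of $X_n$, and that point lies in $X_n\cap[x,x+r)_{S^1}$, so by definition $\ddist(x,f_r(x))> r-\frac{2\ln n}{n}$ for \emph{all} $x\in X_n$ simultaneously. Plugging this uniform lower bound on step lengths into \eqref{eq:wf-formula} gives
\[ \wf(X_n,r)=\lim_{m\to\infty}\frac1m\sum_{i=0}^{m-1}\ddist(f_r^i(x),f_r^{i+1}(x))\ \geq\ r-\frac{2\ln n}{n}, \]
which holds whenever $E_n$ occurs. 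Since $\prob[E_n]\ge 1-\frac1n$, this proves the lemma.

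I do not expect a serious obstacle here; the only thing to watch is the constant in the covering bound, i.e.\ making sure the gap guarantee $\frac{2\ln n}{n}$ (as opposed to, say, $\frac{c\ln n}{n}$ for some other constant) comes out with failure probability $\le\frac1n$. If the clean constant $2$ is not immediate from the crudest union bound, one refines the argument slightly (partitioning into fewer, overlapping arcs, or absorbing lower-order terms for $n$ large and checking small $n$ by hand), but this is routine. The conceptual content — translate a global gap bound into a uniform per-step bound for $f_r$, then average — is straightforward.
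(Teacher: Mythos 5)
Your conceptual plan is the same as the paper's: show that with probability at least $1-\frac1n$ every step satisfies $\ddist(x,f_r(x))\geq r-\frac{2\ln n}{n}$, then average along a forward orbit via~\eqref{eq:wf-formula}. The difference is in how that uniform step bound is obtained. You route it through a global statement (``every arc of length $\frac{2\ln n}{n}$ meets $X_n$'') established by a fixed covering; the paper instead union-bounds directly over the $n$ random sample points: for each $x\in X_n$, the bad event $\ddist(x,f_r(x))<r-\frac{2\ln n}{n}$ is exactly the event that the arc $[x+r-\frac{2\ln n}{n},x+r)_{S^1}$ contains none of the other $n-1$ points, which has probability $(1-\frac{2\ln n}{n})^{n-1}<n^{-2}$ for $n\geq 3$. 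Anchoring the arcs to the sample points sidesteps all discretization and constant issues and gives the clean bound in one line.

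Your primary covering argument, as stated, has a concrete error: with cover arcs of length $\frac{\ln n}{n}$, an empty arc of length $\frac{2\ln n}{n}$ only forces \emph{one} cover arc to be fully contained and hence empty, not two consecutive ones (consider an empty arc offset by a small $\varepsilon$ from the tiling). Each cover arc is empty with probability about $n^{-1}$, and the union bound over $\lceil n/\ln n\rceil$ arcs then yields failure probability on the order of $\frac{1}{\ln n}$, far weaker than $\frac1n$. Your own fallback — union-bounding over $n$ arcs of length $\frac{2\ln n}{n}$ each empty with probability $\le n^{-2}$ — is essentially the paper's argument, except the paper uses the $n$ \emph{random} points as anchors (making the ``every such arc hits $X_n$'' implication immediate), whereas fixed arcs of length $\frac{2\ln n}{n}$ must be shrunk slightly to guarantee the coverage property, costing a constant factor you'd then have to absorb. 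So: right idea, working fallback, but the first covering estimate doesn't close, and the paper's anchoring-to-the-sample trick is the cleaner way to land the exact constant.
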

\begin{proof}
For any $x\in X_n$ the event $\ddist(x,f_r(x))< r-\frac{2\ln n}{n}$ occurs if the arc $[x+r-\frac{2\ln n}{n},x+r)_{S^1}$ contains no point of $X_n$. The length of that arc is $\frac{2\ln n}{n}$, and so this event has probability $(1-\frac{2\ln n}{n})^{n-1}<n^{-2}$ (for $n\ge3$). By the union bound, all $x\in X_n$ satisfy $\ddist(x,f_r(x))\ge r- \frac{2\ln n}{n}$ with probability at least $1-\frac{1}{n}$, and the result follows from \eqref{eq:wf-formula}.
\end{proof}

\begin{theorem}\label{thm:periodic-vertices-lower-bound}
Let $r\in (0,1)$ have irrationality exponent $2$. For any $\varepsilon>0$ the number of periodic points of $f_r$ satisfies $\lim_{n\to\infty}\prob[\per(X_n,r)>n^{1/2-\varepsilon}]=1$, and hence $\expect[\per(X_n,r)]=\Omega(n^{1/2-\varepsilon})$.
\end{theorem}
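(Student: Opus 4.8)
The plan is to combine the quantitative lower bound on the winding fraction (Lemma~\ref{lem:wf-lower-bound}) with the Diophantine hypothesis on $r$. First I would record the structural reduction: by Lemma~\ref{lem:same-orbits} all periodic orbits of $f_r\colon X_n\to X_n$ have a common length $\olen_n$, so if there are $d\ge1$ of them then $\per(X_n,r)=d\,\olen_n\ge\olen_n$, while $\wf(X_n,r)=w_n/\olen_n$ is automatically in lowest terms (recall $\gcd(w_n,\olen_n)=1$). Hence $\{\olen_n>n^{1/2-\varepsilon}\}\subseteq\{\per(X_n,r)>n^{1/2-\varepsilon}\}$, and it suffices to prove $\prob[\olen_n>n^{1/2-\varepsilon}]\to1$.

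Next I would set up the key chain of inequalities. On the event of Lemma~\ref{lem:wf-lower-bound}, which has probability at least $1-\frac1n$, we have $r-\frac{2\ln n}{n}\le\wf(X_n,r)=\frac{w_n}{\olen_n}$, while $\frac{w_n}{\olen_n}=\wf(X_n,r)<r$ always; thus $0<r-\frac{w_n}{\olen_n}\le\frac{2\ln n}{n}$ on this event. Now I would translate ``$r$ has irrationality exponent $2$'' into the usable form: for every $\alpha>2$ there is a constant $c_\alpha>0$ with $\left|r-\frac pq\right|\ge\frac{c_\alpha}{q^{\alpha}}$ for all rationals $\frac pq$, because by definition only finitely many $\frac pq$ satisfy $0<|r-\frac pq|<\frac1{q^\alpha}$, and for each of those finitely many exceptions $|r-\frac pq|$ is a fixed positive quantity (using $r\notin\QQ$), so shrinking $c_\alpha$ absorbs them. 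Applying this with $\frac pq=\frac{w_n}{\olen_n}$ gives $\frac{c_\alpha}{\olen_n^{\alpha}}\le\frac{2\ln n}{n}$, i.e.\ $\olen_n\ge\left(\frac{c_\alpha\,n}{2\ln n}\right)^{1/\alpha}$ on the good event.

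Finally I would calibrate $\alpha$. Given $\varepsilon>0$, choose $\delta>0$ small enough that $\frac1{2+\delta}>\frac12-\varepsilon$ and set $\alpha=2+\delta$; then $c_\alpha$ is a fixed positive constant and $\left(\frac{c_\alpha n}{2\ln n}\right)^{1/(2+\delta)}=n^{1/(2+\delta)}\bigl(\tfrac{c_\alpha}{2\ln n}\bigr)^{1/(2+\delta)}$, which exceeds $n^{1/2-\varepsilon}$ for all large $n$ since $n^{1/(2+\delta)-(1/2-\varepsilon)}\to\infty$ dominates any fixed power of $\ln n$. Hence, for $n$ large, the event of Lemma~\ref{lem:wf-lower-bound} is contained in $\{\olen_n>n^{1/2-\varepsilon}\}\subseteq\{\per(X_n,r)>n^{1/2-\varepsilon}\}$, so $\lim_{n\to\infty}\prob[\per(X_n,r)>n^{1/2-\varepsilon}]=1$. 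The expectation statement then follows from $\per\ge0$: $\expect[\per(X_n,r)]\ge n^{1/2-\varepsilon}\,\prob[\per(X_n,r)>n^{1/2-\varepsilon}]=\Omega(n^{1/2-\varepsilon})$.

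The only genuinely non-routine point is the passage from the definition of the irrationality exponent to the one-sided bound $|r-p/q|\ge c_\alpha q^{-\alpha}$ together with the matching choice $\delta=\delta(\varepsilon)$; everything else is immediate from Lemma~\ref{lem:wf-lower-bound} and the structure of the core. The $\ln n$ factor coming out of Lemma~\ref{lem:wf-lower-bound} is precisely why the exponent is $\frac12-\varepsilon$ rather than $\frac12$; with stronger Diophantine control on $r$ (e.g.\ bounded partial quotients) one could presumably sharpen this, but that is not needed here.
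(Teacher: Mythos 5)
Your proof is correct and follows essentially the same route as the paper's: combine Lemma~\ref{lem:wf-lower-bound} with the Diophantine consequence of irrationality exponent $2$ to bound below the denominator $\olen_n$ of $\wf(X_n,r)$, then use $\per(X_n,r)\ge\olen_n$. The only cosmetic difference is that you package the finitely-many-exceptions argument as a uniform constant $c_\alpha$ in $|r-p/q|\ge c_\alpha q^{-\alpha}$, whereas the paper phrases it as ``for $n$ sufficiently large, $0<r-p/q<2\ln n/n$ forces $q^{-\alpha}\le r-p/q$''; these are equivalent.
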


\begin{proof}
Let $\alpha$ satisfy $\frac{1}{2}-\varepsilon<\frac{1}{\alpha}<\frac{1}{2}$. Since $\alpha>2$, the set of all $p,q$ such that $0<r-\frac{p}{q}<\frac{1}{q^\alpha}$ is finite. Hence for sufficiently large $n$ we have that
$0<r-\frac{p}{q}<\frac{2\ln n}{n}$ implies $\frac{1}{q^\alpha}\le r-\frac{p}{q}$. Let $\wf(X_n,r)=\frac{p}{q}$. By Lemma~\ref{lem:wf-lower-bound}, with probability at least $1-\frac{1}{n}$ we have $0<r-\frac{p}{q}<\frac{2\ln n}{n}$, and therefore $\frac{1}{q^\alpha}\le r-\frac{p}{q}<\frac{2\ln n}{n}$. Hence asymptotically almost surely we have $\per(X_n,r)\geq q > (\frac{n}{2\ln n})^{1/\alpha} >n^{1/2-\varepsilon}$, 
where the last inequality follows since $\frac{1}{2}-\varepsilon<\frac{1}{\alpha}$.
\end{proof}

\section{Application to Vietoris--Rips complexes}\label{S:Application-to-Vietoris-Rips}

For concepts in topology and combinatorial topology we refer to Kozlov \cite{Kozlov}.
The Vietoris--Rips complex \cite{Latschev2001, EdelsbrunnerHarer} captures the topological features of a metric space at a given proximity scale.
\begin{definition}
\label{def:vrcomplexes}
Let $X$ be a metric space and let $r\geq 0$. The \emph{Vietoris--Rips} simplicial complex $\vrc{X}{r}$ has vertex set $X$, and has a finite subset $\sigma\subseteq X$ as a face if and only if the diameter of $\sigma$ is less than $r$.
\end{definition}

For $X\subseteq S^1$ finite, $\vrc{X}{r}$ is the clique complex of the graph connecting each $x\in X$ with points in $X\cap (x-r,x+r)_{S^1}$. If $r>\frac12$ this graph is complete, so here we restrict to the case $0<r<\frac12$.

If $x,y$ are two vertices of a simplicial complex $K$, then we say $x$ is \emph{dominated} by $y$ if $x\in \sigma\in K$ implies $\sigma\cup\{y\}\in K$ for every simplex $\sigma\in K$. If $D\subseteq V(K)$ is a set of vertices, such that each $x\in D$ is dominated by some $y\in V(K)\setminus D$, then $K$ and $K\setminus D$ are homotopy equivalent \cite{BabsonKozlov2006,BarmakStar2013,Matouvsek2008}. The passage from $K$ to $K\setminus D$ is called \emph{dismantling}.

\begin{lemma}
\label{lem:single-dism}
Suppose $X\subseteq S^1$ is finite and $0<r<\frac12$. If $x_0\notin f_r(X)$ then $x_0$ is dominated in $\vrc{X}{r}$ by some vertex in $f_r(X)$.
\end{lemma}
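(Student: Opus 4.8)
The plan is to show that if $x_0 \notin f_r(X)$, then $x_0$ is dominated by its clockwise-nearest neighbor $y_0$, and that $y_0 \in f_r(X)$. The hypothesis $x_0 \notin f_r(X)$ means, via the analysis of Section~\ref{SS:determining-the-level} (Lemma~\ref{lem:preimage2} with $i=0$), that $w_1 > z_1$; unwinding the definitions, this says that the point of $X$ immediately clockwise after $x_0 - r$ — call it $x_1$ — satisfies $\ddist(x_1, x_0) \ge \ddist(y_0, x_0)$ up to the appropriate inequality, i.e.\ that every point whose $f_r$-reach would include $x_0$ also reaches at least as far as $y_0$. Concretely: for any $a \in X$ with $x_0 \in (a-r, a)_{S^1}$ (so that $x_0$ is a candidate value of $f_r(a)$), we must have $y_0 \in (a-r,a)_{S^1}$ as well — otherwise $a - r$ would lie in $(x_0, y_0)_{S^1}$, forcing a point of $X$ into that open arc, contradicting that $y_0$ is the clockwise-nearest neighbor of $x_0$. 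Since $f_r(a)$ is the clockwise-furthest point of $X$ in $[a,a+r)_{S^1}$ wait — re-examining: $x_0 \notin f_r(X)$ precisely means no $a$ has $f_r(a) = x_0$, which (again since $y_0$ is $x_0$'s immediate clockwise neighbor) means every $a$ that could have reached $x_0$ actually reaches $y_0$ or beyond.

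First I would establish the geometric claim that $y_0 \in f_r(X)$. Take any $a \in X$ with $\ddist(a, x_0) < r$, i.e.\ $x_0 \in (a - r, a]_{S^1}$ with $a \ne x_0$; such $a$ exists since $X$ is nonempty and $r < \tfrac12$ guarantees we are not in a degenerate regime (one can take $a$ to be a point close to $x_0$ from the counterclockwise side). Because $x_0 \notin f_r(X)$, we have $f_r(a) \ne x_0$, and since $y_0$ is the first point of $X$ clockwise after $x_0$ while $f_r(a) \in [a, a+r)_{S^1}$ is clockwise-furthest, we must have $f_r(a)$ equal to $y_0$ or strictly clockwise beyond it; in particular $y_0 \in [a, f_r(a)]_{S^1} \subseteq X \cap [a, a+r)_{S^1}$, and running $f_r(a)$ clockwise-furthest shows $y_0$ is itself hit: apply the same reasoning to the point $a'$ immediately clockwise after $y_0 - r$, whose $f_r$-image is clockwise-furthest in $[a', a'+r)_{S^1} \ni y_0$ and cannot be $x_0$ (not in the image) nor anything counterclockwise of $y_0$, hence is $y_0$ or beyond — and "beyond" is impossible if $a'$ is chosen as the immediate clockwise point after $y_0 - r$, since then $y_0$ is the clockwise-furthest point of $X$ within distance less than $r$. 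So $f_r(a') = y_0$, giving $y_0 \in f_r(X)$.

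Next I would prove the domination. Let $\sigma \in \vrc{X}{r}$ be a simplex containing $x_0$; I must show $\sigma \cup \{y_0\} \in \vrc{X}{r}$, i.e.\ $\operatorname{diam}(\sigma \cup \{y_0\}) < r$. Since $r < \tfrac12$, the vertex set of any simplex of $\vrc{X}{r}$ lies in an arc of length less than $r$, so $\sigma$ is contained in some arc $[a, b]_{S^1}$ with $\ddist(a,b) < r$, where $x_0 \in [a,b]_{S^1}$. The key point is that $y_0$, the clockwise-nearest neighbor of $x_0$, must then also lie in $[a,b]_{S^1}$: indeed $b \in X$ is the clockwise-furthest vertex of $\sigma$, and if $y_0$ were strictly clockwise past $b$, then since $x_0 \le b$ (clockwise) and $y_0$ is immediately after $x_0$, we would need $b = x_0$, but then $b - r < x_0 - r$ wait, simpler: if $y_0 \notin [a,b]_{S^1}$ then $(x_0, b]_{S^1} \cap X$ would be empty, forcing $b = x_0$, so $\sigma$'s clockwise-furthest vertex is $x_0$ itself; then $\sigma \cup \{y_0\}$ lies in $[a, y_0]_{S^1}$, and I must check $\ddist(a, y_0) < r$. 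For this I invoke $x_0 \notin f_r(X)$: consider the point $x_1 \in X$ immediately clockwise after $x_0 - r$; then $\ddist(x_1, y_0) < r$ would make $f_r(x_1) $ reach $y_0$ — and since it cannot equal $x_0$, it does reach $y_0$, so it suffices to show $a$ is clockwise at or after $x_1$, i.e.\ $\ddist(a, x_0) < r$, which holds because $\sigma \ni a, x_0$ has diameter $< r$. Hence $\ddist(a, y_0) = \ddist(a,x_0) + \ddist(x_0,y_0) \le \ddist(a, f_r(x_1)) < r$, using $\ddist(x_0,y_0) \le \ddist(x_0, f_r(x_1))$ from $y_0$ being the immediate neighbor. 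Thus $\sigma \cup \{y_0\} \in \vrc{X}{r}$, so $x_0$ is dominated by $y_0 \in f_r(X)$.

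The main obstacle I anticipate is the bookkeeping in the domination step when $y_0$ is already an element of $\sigma$ (then there is nothing to prove) versus when it is not, and correctly handling the "wraparound" possibility that the arc containing $\sigma$ straddles the point $x_0 - r$; the cleaner route is probably to argue entirely in terms of clockwise distances from $x_0$, noting that every vertex of $\sigma$ lies within clockwise distance $< r$ of $x_0$ on one side or within counterclockwise distance $< r$ on the other, and that inserting $y_0$ — which is within clockwise distance $\ddist(x_0, y_0) \le \ddist(x_0, f_r(x)) < r$ of $x_0$ for the relevant reaching point $x$, and hence within distance $< r$ of every other vertex of $\sigma$ by a short arc-addition estimate using $r < \tfrac12$ — keeps the diameter below $r$. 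Getting the strict-versus-weak inequalities exactly right (Definition~\ref{def:dyngraph} uses the half-open arc $[x, x+r)_{S^1}$) is the one place requiring care.
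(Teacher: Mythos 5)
Your claim that $y_0$, the clockwise-nearest neighbor of $x_0$, lies in $f_r(X)$ is false in general, and this is a genuine gap. Take $X=\{0,\,0.01,\,0.02\}\subseteq S^1$ and $r=0.03$. Then $f_r(0)=f_r(0.01)=f_r(0.02)=0.02$, so $f_r(X)=\{0.02\}$; with $x_0=0$ we have $x_0\notin f_r(X)$ and $y_0=0.01$, but $y_0\notin f_r(X)$. The flaw in your argument is the sentence claiming that ``beyond is impossible'' for the point $a'$ immediately clockwise after $y_0-r$: since $a'$ lies strictly clockwise of $y_0-r$, the arc $[a',a'+r)_{S^1}$ extends strictly past $y_0$ on the clockwise side, and nothing prevents $X$ from having points there; so $f_r(a')$ can land beyond $y_0$ (in the example $a'=0$ and $f_r(a')=0.02\neq y_0$). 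Consequently you have not produced a dominating vertex \emph{in $f_r(X)$}, which is precisely what the lemma requires (and what Lemma~\ref{lem:dom-level} later needs).

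The part of your argument that does work is the domination claim itself: if $x_0\notin f_r(X)$ then $y_0$ dominates $x_0$, which follows from the inequality $w_1>z_1$ (Lemma~\ref{lem:preimage2} with $i=0$), exactly as you compute. You could repair the proof by observing that vertex domination is transitive (if $N[x_0]\subseteq N[y_0]$ and $N[y_0]\subseteq N[z_0]$ then $N[x_0]\subseteq N[z_0]$), so iterating the neighbor-domination step until one first reaches a point of $f_r(X)$ gives the result. The paper's proof avoids this iteration by defining $x_0'$ directly as the first point clockwise from $x_0$ lying in $f_r(X)$, then noting that $X\cap(x_0-r,\,x_0'-r]_{S^1}=f_r^{-1}\bigl(X\cap[x_0,x_0')_{S^1}\bigr)=\emptyset$, which immediately gives $X\cap(x_0-r,x_0+r)_{S^1}\subseteq X\cap(x_0'-r,x_0'+r)_{S^1}$ and hence domination by $x_0'$ in one step.
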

\begin{proof}
Let $x_0'$ be the first point clockwise from $x_0$ which lies in $f_r(X)$. We have $X\cap (x_0-r,x_0'-r]_{S^1}=f_r^{-1}(X\cap [x_0,x_0')_{S^1})=\emptyset$. It follows that $X\cap (x_0-r,x_0+r)_{S^1}\subseteq X\cap (x_0'-r,x_0'+r)_{S^1}$; therefore $x_0$ is dominated by $x_0'$.
\end{proof}

The following is an immediate consequence.
\begin{lemma}\label{lem:dom-level}
Let $X\subseteq S^1$ be finite, let $0<r<\frac12$, and let $i\geq 0$. Then $\vrc{f_r^i(X)}{r}$ dismantles to $\vrc{f_r^{i+1}(X)}{r}$.
\end{lemma}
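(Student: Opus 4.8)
The plan is to iterate Lemma~\ref{lem:single-dism}, one vertex at a time, observing that the relevant set of ``removable'' vertices at the $i$-th stage is exactly the set of points at level $i$, and that dominators always survive into the next image. First I would set $Y=f_r^i(X)$ and note that the restriction of the dynamical system to $Y$ behaves well: since $f_r$ maps $X$ into $X$ and is determined purely by the local cyclic structure, $f_r(Y)=f_r^{i+1}(X)$ and the points of $Y$ not in $f_r(Y)=f_r^{i+1}(X)$ are precisely the level-$i$ points $\numlev_i(X,r)$ of the original system (by Definition~\ref{def:level}). Apply Lemma~\ref{lem:single-dism} with the finite set $Y$ in place of $X$: each $x_0\in Y\setminus f_r(Y)$ is dominated in $\vrc{Y}{r}$ by the first clockwise point $x_0'$ of $Y$ lying in $f_r(Y)$.

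Next I would check that this collection of dominations can be performed simultaneously, i.e.\ that it constitutes a dismantling in the sense defined before Lemma~\ref{lem:single-dism}. Set $D=Y\setminus f_r(Y)$; every $x_0\in D$ is dominated by a vertex $x_0'\in f_r(Y)=V(Y)\setminus D$, so $D$ is a legitimate dismantling set and $\vrc{Y}{r}\simeq\vrc{Y}{r}\setminus D$. But $Y\setminus D=f_r(Y)=f_r^{i+1}(X)$, and $\vrc{Y}{r}\setminus D$ is by definition the full subcomplex of $\vrc{Y}{r}$ on the vertex set $f_r^{i+1}(X)$, which coincides with $\vrc{f_r^{i+1}(X)}{r}$ because the Vietoris--Rips complex of a subset is the induced subcomplex. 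This gives the claimed dismantling $\vrc{f_r^i(X)}{r}\to\vrc{f_r^{i+1}(X)}{r}$.

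The one subtlety — and the main (mild) obstacle — is verifying that the set $D$ is dominated ``from outside $D$'': we need each dominator $x_0'$ to lie in $f_r(Y)$, not merely in $Y$. This is exactly what Lemma~\ref{lem:single-dism} gives ($x_0'$ is chosen as the first clockwise point of $Y$ in $f_r(Y)$, and such a point exists since $f_r(Y)\neq\emptyset$ whenever $Y\neq\emptyset$), so no vertex of $D$ serves as a dominator for another vertex of $D$, and the dismantling is well-defined in a single step. One should also note the degenerate case $Y=f_r(Y)$ (equivalently $\numlev_i(X,r)=0$), where $D=\emptyset$ and the statement holds trivially since $f_r^i(X)=f_r^{i+1}(X)$. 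Since all hypotheses $0<r<\frac12$ and finiteness of $f_r^i(X)$ are inherited from those on $X$, the proof is complete.
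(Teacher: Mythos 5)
Your proof is correct and matches the argument the paper leaves implicit (the lemma is presented as an ``immediate consequence'' of Lemma~\ref{lem:single-dism}): apply Lemma~\ref{lem:single-dism} to $Y=f_r^i(X)$, note that $Y$ is forward-invariant so the intrinsic map on $Y$ agrees with the restriction of $f_r$ and its non-image is exactly $Y\setminus f_r^{i+1}(X)$, and observe that every vertex in this set is dominated by one outside it, giving a valid single-step dismantling to the induced subcomplex $\vrc{f_r^{i+1}(X)}{r}$. Your attention to the two mild subtleties (dominators lie in $f_r(Y)$, and the degenerate case $D=\emptyset$) is exactly what is needed to make the ``immediate consequence'' rigorous.
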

In particular $\vrc{X}{r}$ dismantles to a homotopy equivalent subcomplex induced by the periodic points of $f_r$. We call this subcomplex the \emph{core} of $\vrc{X}{r}$. It has no dominated vertices, and so it is not dismantlable any further.

\smallskip
We can now state a description of the homotopy type of $\vrc{X}{r}$ exclusively in terms of the dynamical system $f_r$. Below we denote by $\numorb(X,r)$ the number of periodic orbits of $f_r:X\to X$, and $\bigvee^mY$ is a wedge sum of $m$ copies of topological space $Y$. The next proposition is a restatement of Corollary~4.5 of \cite{AA-VRS1} using the notation of this paper.

\begin{proposition}\label{prop:vrchtpy}
If $X\subseteq S^1$ is finite and $0< r<\frac12$, then there is a homotopy equivalence
\begin{equation*}
\vrc{X}{r}\htpyequiv\begin{cases}
S^{2l+1} & \mathrm{if}\ \frac{l}{2l+1}<\wf(X,r)<\frac{l+1}{2l+3}\\
\bigvee^{\numorb(X,r)-1}S^{2l} & \mathrm{if}\ \wf(X,r)=\frac{l}{2l+1}
\end{cases}
\quad\mathrm{for\ some}\ l\in\NN.
\end{equation*}
\end{proposition}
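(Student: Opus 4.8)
The plan is to strip away all non-periodic structure and reduce the statement to the homotopy classification of clique complexes of cyclic graphs, which is essentially the content of Corollary~4.5 of \cite{AA-VRS1}. Write $Y\subseteq X$ for the set of periodic points of $f_r$ and set $n:=|Y|=\per(X,r)$. As already noted immediately after Lemma~\ref{lem:dom-level}, iterating that lemma shows $\vrc{X}{r}$ dismantles onto its core $\vrc{Y}{r}$, so $\vrc{X}{r}\htpyequiv\vrc{Y}{r}$; this disposes of the non-periodic vertices and reduces the whole proposition to understanding $\vrc{Y}{r}$.

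Next I would identify $\vrc{Y}{r}$ combinatorially. List $Y=\{y_0,\dots,y_{n-1}\}$ in clockwise cyclic order. Since $f_r|_Y$ is a bijection preserving the cyclic order, it is a rotation: $f_r(y_i)=y_{i+k}$ (indices mod $n$) for a fixed $0\le k<n$. One then checks $k/n=\wf(X,r)$ and $\gcd(n,k)=\numorb(X,r)$; this is exactly the content of Lemma~\ref{lem:same-orbits} together with the Example, which identify the core with $\mathrm{Reg}_n^k$, so that each periodic orbit has length $n/\gcd(n,k)$ and winding number $k/\gcd(n,k)$. Since the Vietoris--Rips complex of a finite subset of $S^1$ is the clique complex of the graph joining pairs at circular distance $<r$, it remains to verify that on $Y$ this graph is the cyclic graph $\cnk{n}{k}$. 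From $f_r(y_i)=y_{i+k}$ we get $\ddist(y_i,y_{i+k})<r\le\ddist(y_i,y_{i+k+1})$ for all $i$, so $y_i$ is adjacent to $y_{i\pm1},\dots,y_{i\pm k}$. The hypothesis $r<\tfrac12$ gives $\wf(X,r)<\tfrac12$, hence $2k<n$, and this rules out any wrap-around edge: when $k<m<n-k$, each of the two arcs between $y_i$ and $y_{i+m}$ contains an arc $[y_j,y_{j+k+1}]_{S^1}$ of length $\ge r$, so $y_i$ and $y_{i+m}$ are non-adjacent. Therefore $\vrc{Y}{r}\iso\cl(\cnk{n}{k})$.

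Finally I would invoke the homotopy classification of $\cl(\cnk{n}{k})$ from \cite{AA-VRS1} (resting also on \cite{AAFPP-J}): $\cl(\cnk{n}{k})\htpyequiv S^{2l+1}$ when $\tfrac{l}{2l+1}<\tfrac{k}{n}<\tfrac{l+1}{2l+3}$, and $\cl(\cnk{n}{k})\htpyequiv\bigvee^{\gcd(n,k)-1}S^{2l}$ when $\tfrac{k}{n}=\tfrac{l}{2l+1}$. Substituting $k/n=\wf(X,r)$ and $\gcd(n,k)=\numorb(X,r)$ yields the proposition; the case split is exhaustive because the fractions $\tfrac{l}{2l+1}$ increase to $\tfrac12$, so the rational number $\wf(X,r)\in[0,\tfrac12)$ either equals one of them or lies strictly between two consecutive ones. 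All of the genuine topology is imported from \cite{AA-VRS1}; the only real work here is the combinatorial identification $\vrc{Y}{r}\iso\cl(\cnk{n}{k})$ in the middle step, which is also the one place the hypothesis $r<\tfrac12$ is essential.
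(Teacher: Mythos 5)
Your proof is correct, and it is a more self-contained elaboration of what the paper compresses into a single citation. The paper's proof is essentially two lines: it invokes Corollary 4.5 of \cite{AA-VRS1} directly for finite subsets of $S^1$ (so the homotopy type in terms of $\wf(X,r)$ is taken as given, with the wedge exponent expressed as $n-2k-1$ where the core is $\mathrm{Reg}_n^k$), and the only thing it verifies is the arithmetic rewrite $\numorb(X,r)-1 = \gcd(n,k)-1 = k/l - 1 = n-2k-1$. You instead rebuild the chain: dismantle to the core via Lemma~\ref{lem:dom-level}, show $f_r|_Y$ is a cyclic rotation by $k$ positions (which is what identifies the core with $\mathrm{Reg}_n^k$), verify directly that $\vrc{Y}{r}\iso\cl(\cnk{n}{k})$ using $r<\tfrac12$ to kill wrap-around edges, and then cite the homotopy classification of $\cl(\cnk{n}{k})$. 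That intermediate identification is genuinely the place where the hypothesis $r<\tfrac12$ enters, and you are right to flag it; the paper leaves it implicit in the reference. The gain in your version is that it makes the role of $r<\tfrac12$ and the reduction-to-core visible; the cost is length, and you still ultimately import the nontrivial topology from the same source. One small imprecision: you quote the classification of $\cl(\cnk{n}{k})$ from \cite{AA-VRS1} with wedge exponent $\gcd(n,k)-1$, whereas that source states it as $n-2k-1$; these agree precisely because $k/n=\tfrac{l}{2l+1}$ forces $\gcd(n,k)=n-2k$, which is exactly the arithmetic step the paper's proof spells out and yours implicitly absorbs into the citation. Making that identity explicit would close the loop cleanly.
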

\begin{proof}
By Corollary~4.5 of \cite{AA-VRS1}, we need only consider the case where
\begin{equation}\label{eq:wf-equality}
\frac{l}{2l+1}=\wfc{X}{r}=\frac{k}{n}
\end{equation}
and the core of $f_r\colon X\to X$ is isomorphic to to $\mathrm{Reg}_n^k$. We must show $\numorb(X,r)-1=n-2k-1$, which follows since \eqref{eq:wf-equality} implies
\[ \numorb(X,r)=\gcd(n,k)=\frac{k}{l}=n-2k. \]
\end{proof}

Using these observations we can apply the main results of this paper to deduce the expected asymptotic properties of $\vrc{X_n}{r}$, most notably the expected number of vertices in the core.


\begin{theorem}\label{corollary:fraction-core-points}
Let $X_n\subseteq S^1$ be a sample of $n$ points chosen independently, uniformly at random. Let  $0<r<\frac12$.
\begin{itemize}
\item[(a)] The expected number of vertices in the core of $\vrc{X_n}{r}$ is
\[ \begin{cases}
\frac{1}{q}n+o(n) & \mathrm{if}\ r=\frac{p}{q},\\
o(n) &  \mathrm{if}\ r\notin\QQ.
\end{cases} \]
If $r$ has irrationality exponent $2$, then the expected number of vertices in the core is $\Omega(n^{1/2-\varepsilon})$ for any $\varepsilon>0$.
\item[(b)] Asymptotically almost surely $\vrc{X_n}{r}$ is homotopy equivalent to a sphere of dimension $2\big\lceil\frac{r}{1-2r}\big\rceil-1$.
\end{itemize}
\end{theorem}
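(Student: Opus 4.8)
For part (a), the plan is to identify the core of $\vrc{X_n}{r}$ with the subcomplex induced on the periodic points of $f_r\colon X_n\to X_n$, and then invoke the counting results already proved. First I would note that by Lemma~\ref{lem:dom-level}, iterated dismantling shows $\vrc{X_n}{r}$ dismantles to the subcomplex induced by $f_r^\infty(X_n)$, the set of periodic points, and that this core has no dominated vertices (a periodic point is never dropped because $f_r$ restricted to periodic points is a bijection, so no periodic point lies outside the image). Hence the number of core vertices equals $\per(X_n,r)$. Taking expectations and dividing by $n$, the first two cases follow immediately from Theorem~\ref{thm:fraction-periodic-points}: the limit is $\frac1q$ when $r=\frac pq$, giving $\frac1q n + o(n)$, and is $0$ when $r\notin\QQ$, giving $o(n)$. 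The irrationality-exponent-$2$ statement is then a direct restatement of Theorem~\ref{thm:periodic-vertices-lower-bound}, since $\per(X_n,r)=\Omega(n^{1/2-\varepsilon})$ is exactly the claimed bound on the core size.

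For part (b), the plan is to apply Proposition~\ref{prop:vrchtpy} together with the fact that $\wf(X_n,r)\to r$ in probability. By Lemma~\ref{lem:wf-lower-bound}, asymptotically almost surely $r-\frac{2\ln n}{n}\le\wf(X_n,r)<r$ (the upper bound being the always-true inequality $\wf<r$ from Section~\ref{SS:Definitions-and-basic-facts}). Now for $r\notin\QQ$, choose the unique $l\in\NN$ with $\frac{l}{2l+1}<r<\frac{l+1}{2l+3}$; since these strict inequalities are an open condition and $\wf(X_n,r)$ converges to $r$, a.a.s.\ $\wf(X_n,r)$ lands in the same open interval $\big(\frac{l}{2l+1},\frac{l+1}{2l+3}\big)$, so by Proposition~\ref{prop:vrchtpy} we get $\vrc{X_n}{r}\simeq S^{2l+1}$ a.a.s. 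It remains to check $l=\big\lceil\frac{r}{1-2r}\big\rceil$: the inequality $\frac{l}{2l+1}<r$ rearranges to $l<\frac{r}{1-2r}$ and $r<\frac{l+1}{2l+3}$ rearranges to $\frac{r}{1-2r}<l+1$, i.e.\ $l=\lceil\frac{r}{1-2r}\rceil$ provided $\frac{r}{1-2r}\notin\ZZ$, which holds automatically since $r\notin\QQ$. For rational $r=\frac pq$, the argument is essentially the same except one must rule out the boundary case: $r$ is irrational-exponent-free here, so $\wf(X_n,r)=r$ cannot happen (indeed $\wf<r$ always), and a.a.s.\ $\wf(X_n,r)$ exceeds $r-\frac{2\ln n}{n}$, which for large $n$ forces it into the open interval $\big(\frac{l}{2l+1},\frac{l+1}{2l+3}\big)$ unless $r$ equals the right endpoint $\frac{l+1}{2l+3}$; but if $r=\frac{l+1}{2l+3}$ we instead set $l'=l+1$ and the relevant open interval becomes $\big(\frac{l}{2l+1},\frac{l+1}{2l+3}\big)$ with $r$ at its \emph{left} endpoint $\frac{l'}{2l'+1}$... here I need to be slightly more careful and should instead argue directly that a.a.s.\ $\frac{l}{2l+1}<\wf(X_n,r)<r\le\frac{l+1}{2l+3}$ with $l=\lceil\frac{r}{1-2r}\rceil$, so the sphere case of Proposition~\ref{prop:vrchtpy} applies once $\wf(X_n,r)$ is close enough to $r$ and strictly below it, which happens a.a.s.

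The main obstacle I anticipate is the boundary bookkeeping in part (b): one has to confirm that $\wf(X_n,r)$, which is always strictly less than $r$ and a.a.s.\ within $\frac{2\ln n}{n}$ of $r$, lands strictly inside the open Farey interval determined by $r$ (and not on a lower boundary $\frac{l}{2l+1}$), and to match $l$ cleanly with $\lceil\frac{r}{1-2r}\rceil$ in both the rational and irrational cases. Everything else is a direct citation of Theorems~\ref{thm:fraction-periodic-points} and \ref{thm:periodic-vertices-lower-bound}, Proposition~\ref{prop:vrchtpy}, Lemmas~\ref{lem:dom-level} and \ref{lem:wf-lower-bound}, plus the elementary arithmetic identifying the index $l$.
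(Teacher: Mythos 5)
Your approach is exactly the paper's. Part (a) identifies the core vertices with the periodic points (via Lemma~\ref{lem:dom-level}) and cites Theorems~\ref{thm:fraction-periodic-points} and \ref{thm:periodic-vertices-lower-bound}; part (b) combines Lemma~\ref{lem:wf-lower-bound} with Proposition~\ref{prop:vrchtpy}, and the boundary fix you arrive at near the end---take $l$ so that $\frac{l}{2l+1}<r\le\frac{l+1}{2l+3}$ and observe $\wf(X_n,r)<r$ always---is precisely how the paper resolves it.

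There is one arithmetic slip you should correct, as it propagates to a wrong sphere dimension. From $l<\frac{r}{1-2r}$ and $\frac{r}{1-2r}<l+1$ (with $\frac{r}{1-2r}\notin\ZZ$), the conclusion is $l=\lfloor\frac{r}{1-2r}\rfloor=\lceil\frac{r}{1-2r}\rceil-1$, not $l=\lceil\frac{r}{1-2r}\rceil$. Substituting $l=\lceil\frac{r}{1-2r}\rceil-1$ into $S^{2l+1}$ recovers the stated dimension $2\lceil\frac{r}{1-2r}\rceil-1$, whereas your claimed value of $l$ would give a sphere of dimension $2\lceil\frac{r}{1-2r}\rceil+1$, which is $2$ too large. (At the rational boundary case $r=\frac{l+1}{2l+3}$ one has $\frac{r}{1-2r}=l+1$, so $\lceil\frac{r}{1-2r}\rceil-1=l$ still holds, confirming that the ceiling formula is consistent with the half-open Farey interval $\frac{l}{2l+1}<r\le\frac{l+1}{2l+3}$.)
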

\begin{proof}
Part (a) is simply Theorems~\ref{thm:fraction-periodic-points} and \ref{thm:periodic-vertices-lower-bound}. For (b), let $l$ be such that $\frac{l}{2l+1}<r\leq \frac{l+1}{2l+3}$, that is $l=\big\lceil\frac{r}{1-2r}\big\rceil-1$. By Lemma~\ref{lem:wf-lower-bound} a.a.s.\ $\frac{l}{2l+1}<\wf(X_n,r)<r\leq \frac{l+1}{2l+3}$, and (b)  follows from Proposition~\ref{prop:vrchtpy}.
\end{proof}

\bibliographystyle{plain}
\bibliography{RandomCyclicDynamicalSystems}

\appendix

\section{Proofs of asymptotic independence}\label{append:independence}

In this appendix we provide the asymptotic independence results (Lemmas~\ref{lem:K_i}--\ref{lem:S_i(q)}) which are needed for the proofs of Propositions~\ref{prop:levels} and \ref{prop:swift-witness}.

\begin{lemma}\label{lem:DCT}
Fix $m\in\NN$ and let $x=(x_1,\ldots,x_m)\in\RR_+^m$. Let $0<c\le\frac{1}{m}$. Consider the sequence of functions $f_{n,m}\colon\RR_+^m\to\RR$ for $n\ge m+2$ defined by
\[ f_{n,m}(x) = \begin{cases}
\frac{(n-1)(n-2)\cdots(n-m)}{(n-m-1)^m}(1-\frac{\|x\|_1}{n-m-1})^{n-m-1} &\mbox{if }x\in(0,(n-m-1)c)^m, \\
0 &\mbox{otherwise.}
\end{cases} \]
Then for any Lebesgue measurable set $S \subseteq \RR_+^m$ we have
\[ \lim_{n\to\infty}\int_{S} f_{n,m}(x)\ dx=\int_S e^{-\|x\|_1}\ dx. \]
\end{lemma}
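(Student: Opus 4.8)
\textbf{Proof proposal for Lemma~\ref{lem:DCT}.}

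The plan is to apply the dominated convergence theorem to the sequence $f_{n,m}\cdot\mathbf{1}_S$. First I would check the pointwise limit: fix $x\in\RR_+^m$. For all $n$ large enough (depending on $x$), we have $x\in(0,(n-m-1)c)^m$, since each coordinate $x_i$ is a fixed finite number while $(n-m-1)c\to\infty$. For such $n$,
\[
f_{n,m}(x)=\frac{(n-1)(n-2)\cdots(n-m)}{(n-m-1)^m}\Bigl(1-\frac{\|x\|_1}{n-m-1}\Bigr)^{n-m-1}.
\]
The rational prefactor tends to $1$ as $n\to\infty$ (it is a ratio of two degree-$m$ polynomials in $n$ with equal leading coefficients), and the standard limit $\bigl(1-\frac{a}{N}\bigr)^{N}\to e^{-a}$ with $N=n-m-1$ and $a=\|x\|_1$ gives the exponential factor. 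Hence $f_{n,m}(x)\to e^{-\|x\|_1}$ pointwise on $\RR_+^m$, and therefore $f_{n,m}(x)\mathbf{1}_S(x)\to e^{-\|x\|_1}\mathbf{1}_S(x)$ pointwise.

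Next I would produce an integrable dominating function independent of $n$. On the support $(0,(n-m-1)c)^m$ we have $\|x\|_1<m(n-m-1)c\le n-m-1$ because $c\le\frac1m$; in particular $1-\frac{\|x\|_1}{n-m-1}\in[0,1)$, so the base of the power is nonnegative and the elementary inequality $1-t\le e^{-t}$ (valid for all real $t$) yields
\[
\Bigl(1-\frac{\|x\|_1}{n-m-1}\Bigr)^{n-m-1}\le e^{-\|x\|_1}.
\]
For the prefactor, I would argue that $\frac{(n-1)\cdots(n-m)}{(n-m-1)^m}$ is bounded above by a constant $B$ depending only on $m$ (for instance, for $n\ge m+2$ each factor $\frac{n-j}{n-m-1}\le\frac{n-1}{n-m-1}\le m+2$, so one may take $B=(m+2)^m$; a cleaner bound of $2^m$ works for $n$ sufficiently large, but any uniform constant suffices). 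Consequently $0\le f_{n,m}(x)\le B\,e^{-\|x\|_1}$ for all $n\ge m+2$ and all $x$, and hence $|f_{n,m}\mathbf{1}_S|\le B e^{-\|x\|_1}$, which is integrable over $\RR_+^m$ since $\int_{\RR_+^m}e^{-\|x\|_1}dx=1$.

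Having established the pointwise limit and the uniform integrable bound, the dominated convergence theorem gives $\lim_{n\to\infty}\int_S f_{n,m}(x)\,dx=\int_S e^{-\|x\|_1}\,dx$, as claimed. I do not anticipate a genuine obstacle here; the only point requiring a little care is verifying $c\le\frac1m$ is exactly what forces the base $1-\frac{\|x\|_1}{n-m-1}$ to stay in $[0,1)$ on the truncated domain, which is what makes the inequality $1-t\le e^{-t}$ applicable with a nonnegative base and thus legitimizes raising it to the power $n-m-1$.
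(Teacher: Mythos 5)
Your proof is correct and takes essentially the same approach as the paper: both apply the dominated convergence theorem, establish the pointwise limit via the standard limit $(1-a/N)^N\to e^{-a}$ together with the rational prefactor tending to $1$, and dominate $f_{n,m}$ by a constant multiple of $e^{-\|x\|_1}$ using $1-t\le e^{-t}$ and the role of $c\le\frac1m$ in keeping the base nonnegative. The only cosmetic difference is the choice of constant: you bound the prefactor by $(m+2)^m$ (factor-by-factor), while the paper notes the prefactor is maximized at $n=m+2$ and equals $(m+1)!$; either constant suffices.
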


\begin{proof}
First we claim $f_{n,m}(x) \to e^{-\|x\|_1}$ pointwise for all $x\in\RR_+^m$. Indeed $\lim_{n\to\infty}(n-m-1)c=\infty$ and
\[ \lim_{n\to\infty}\frac{(n-1)(n-2)\cdots(n-m)}{(n-m-1)^m}=1 \quad\mbox{and}\quad \lim_{n\to\infty}\Bigl(1-\frac{\|x\|_1}{n-m-1}\Bigr)^{n-m-1}=e^{-\|x\|_1}. \]

Next, define $g\colon \RR_+^m\to\RR$ via $g(x)=(m+1)!e^{-\|x\|_1}$. We claim that $|f_{n,m}(x)|\le g(x)$ for all $x\in\RR_+^m$ since the maximum of $\frac{(n-1)(n-2)\cdots(n-m)}{(n-m-1)^m}$ over all $n\ge m+2$ occurs when $n=m+2$ and is equal to $(m+1)!$. Also, for any $x\in(0,(n-m-1)c)^m$ we have
\begin{align*}
\Bigl|\Bigl(1-\frac{\|x\|_1}{n-m-1}\Bigr)^{n-m-1}\Bigr|&=\Bigl(1-\frac{\|x\|_1}{n-m-1}\Bigr)^{n-m-1}\quad\mbox{since }x\in(0,(n-m-1)c)^m\mbox{ with }c\le\tfrac{1}{m} \\
&\le (e^{-\frac{\|x\|_1}{n-m-1}})^{n-m-1}\quad\mbox{since }1-t\le e^{-t}\\
&= e^{-\|x\|_1}.
\end{align*}
Finally, since
\[ \int_{\RR_+^m}g(x)\ dx=(m+1)!\int_0^\infty\cdots\int_0^\infty e^{-x_1-\ldots-x_m}\ dx_m \cdots dx_1=(m+1)!,\]
$g$ is integrable.
Hence the dominated convergence theorem
implies for any Lebesgue measurable set $S \subseteq \RR_+^m$ we have
\[ \lim_{n\to\infty}\int_S f_{n,m}(x)\ dx=\int_S e^{-\|x\|_1}\ dx. \]
\end{proof}

For any set $S \subseteq \RR^m$ and $a\in\RR$ we define $aS = \{ax\ |\ x\in S\}$. A cone $K\subseteq \RR_+^m$ is a set such that if $x\in K$, then $ax\in K$ for all $a\ge0$. Note that if $K$ is a cone and $a>0$ then $aK=K$.

For integers $0<k\le n-2$, define $g_{n,k}\colon\RR_+^k\to\RR$ by 
\[g_{n,k}(u_1,\ldots,u_k)=
\begin{cases}
(n-1)(1-u_1)^{n-2}&\mbox{if }k=1,\\
(n-k)\frac{(1-u_1-\ldots-u_k)^{n-k-1}}{(1-u_1-\ldots-u_{k-1})^{n-k}}&\mbox{otherwise}.
\end{cases}\]

\begin{lemma}\label{lem:joint-pdf}
Suppose $K\subseteq\RR_+^m$ is measurable and a cone. Let $0<c\le\frac{1}{m}$. Suppose $u^{(n)}=u=(u_1,\ldots,u_m)\in\RR_+^m$ is a sequence of random vectors such that 
\begin{itemize}
\item $u\in(0,c)^m$ with probability 1 as $n\to\infty$, and
\item the joint probability density function of $u$ restricted to $K\cap (0,c)^m$ is given by the formula
\[ \prod_{k=1}^m g_{n,k}(u_1,\ldots,u_k)=(n-1)\cdots(n-m)(1-u_1-\ldots-u_m)^{n-m-1}. \]
\end{itemize}
Then
\[ \lim_{n\to\infty}\prob[u\in K]=\int_{K}e^{-\|x\|_1}\ dx. \]
\end{lemma}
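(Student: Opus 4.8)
\textbf{Proof plan for Lemma~\ref{lem:joint-pdf}.}
The plan is to reduce this statement to Lemma~\ref{lem:DCT} by a change of variables. First I would observe that because $K$ is a cone and $K=(n-m-1)K$ for $n>m+1$, we may rescale: set $x=(n-m-1)u$, so that $u\in K$ iff $x\in K$, and $u\in(0,c)^m$ iff $x\in(0,(n-m-1)c)^m$. Under this substitution the volume element transforms as $du=(n-m-1)^{-m}\,dx$.

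Next I would write out $\prob[u\in K]$ using the hypothesis on the density. Since $u\in(0,c)^m$ with probability tending to $1$, we have
\[ \prob[u\in K]=\prob[u\in K\cap(0,c)^m]+o(1)=\int_{K\cap(0,c)^m}(n-1)\cdots(n-m)(1-\|u\|_1)^{n-m-1}\,du+o(1). \]
Applying the substitution $x=(n-m-1)u$ turns the integrand into exactly
\[ \frac{(n-1)(n-2)\cdots(n-m)}{(n-m-1)^m}\Bigl(1-\frac{\|x\|_1}{n-m-1}\Bigr)^{n-m-1} \]
integrated over $x\in K\cap(0,(n-m-1)c)^m$, which is precisely $\int_{K} f_{n,m}(x)\,dx$ with $f_{n,m}$ as defined in Lemma~\ref{lem:DCT} (the indicator of $(0,(n-m-1)c)^m$ being built into $f_{n,m}$). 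Thus $\prob[u\in K]=\int_K f_{n,m}(x)\,dx+o(1)$, and Lemma~\ref{lem:DCT} applied with $S=K$ gives $\lim_{n\to\infty}\int_K f_{n,m}(x)\,dx=\int_K e^{-\|x\|_1}\,dx$, completing the proof.

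The only genuinely delicate point is justifying that the $o(1)$ error from truncating to $(0,c)^m$ is legitimate, i.e.\ that the hypothesis ``$u\in(0,c)^m$ with probability $1$ as $n\to\infty$'' lets us discard the complementary event; this is immediate since that event has probability $o(1)$ and the density is a bona fide probability density so the contributions are bounded by that probability. A secondary bookkeeping step is verifying that the telescoping product $\prod_{k=1}^m g_{n,k}(u_1,\ldots,u_k)$ indeed collapses to $(n-1)\cdots(n-m)(1-\|u\|_1)^{n-m-1}$ — this is a routine cancellation of the numerators and denominators $(1-u_1-\cdots-u_{j})$ across consecutive factors, already recorded in the statement of the lemma. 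No step presents a real obstacle; the content is entirely in setting up the rescaling so that Lemma~\ref{lem:DCT} applies verbatim.
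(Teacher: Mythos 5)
Your proof is correct and follows essentially the same route as the paper's: both truncate to $K\cap(0,c)^m$ using the asymptotic containment hypothesis, substitute $x=(n-m-1)u$, use the cone property to rewrite the domain as $K\cap(0,(n-m-1)c)^m$, and then invoke Lemma~\ref{lem:DCT} with $S=K$. The only cosmetic difference is that you write the truncation error as an explicit $o(1)$ term rather than taking limits on both sides of an exact equality.
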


\begin{proof}
\begin{align*}
\lim_{n\to\infty}\prob[u\in K] &=\lim_{n\to\infty}\prob[u\in K\cap(0,c)^m] \quad\mbox{since }u\in(0,c)^m\mbox{ with probability 1 as }n\to\infty\\
&=\lim_{n\to\infty}\int_{K\cap(0,c)^m} \prod_{k=1}^m g_{n,k}(u_1,\ldots,u_k)\ du \\
&=\lim_{n\to\infty}\int_{K\cap(0,c)^m} (n-1)\ldots(n-m)(1-u_1-\ldots-u_m)^{n-m-1}\ du \\
&=\lim_{n\to\infty}\int_{(n-m-1)(K\cap(0,c)^m)} f_{n,m}(x)\ dx \quad\mbox{change of variables }x=(n-m-1)u \\
&=\lim_{n\to\infty}\int_{K\cap(0,(n-m-1)c)^m} f_{n,m}(x)\ dx \quad\mbox{since }K\mbox{ is a cone} \\
&=\lim_{n\to\infty}\int_K f_{n,m}(x)\ dx \quad\mbox{since }f_{n,m}\mbox{ vanishes outside }(0,(n-m-1)c)^m \\
&=\int_K e^{-\|x\|_1}\ dx \quad\mbox{by Lemma~\ref{lem:DCT}.}
\end{align*}
\end{proof}

In Lemmas~\ref{lem:K_i} and \ref{lem:K_i(q)} we work in the setting of Section~\ref{sect:prob2}, where $u=(z_1,\ldots,z_{i+1},w_1,\ldots,w_{i+1})$ is a random vector in $R^{2i+2}$. 

\begin{lemma}\label{lem:K_i}
For $r$ irrational we have $\lim_{n\to\infty}\prob[u\in K_i]=\int_{K_i} e^{-\|x\|_1}\ dx$. 
\end{lemma}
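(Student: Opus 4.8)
The plan is to verify the hypotheses of Lemma~\ref{lem:joint-pdf} for the cone $K=K_i\subseteq\RR^{2i+2}_+$ and the random vector $u=(z_1,\ldots,z_{i+1},w_1,\ldots,w_{i+1})$, and then invoke that lemma directly. We already know from Section~\ref{sect:prob2} that $K_i$ is a measurable convex cone, so only the two probabilistic conditions need attention: first, that $u\in(0,c)^m$ with probability tending to $1$ for a suitable $c\le\frac{1}{2i+2}$; and second, that on $K_i\cap(0,c)^m$ the joint density of $u$ is, up to the $o(1)$ corrections absorbed by Lemma~\ref{lem:joint-pdf}, the product $\prod_{k=1}^{2i+2}g_{n,k}$, i.e.\ $(n-1)\cdots(n-2i-2)(1-\|u\|_1)^{n-2i-3}$.

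First I would set up the geometric picture. Recall $z_1=\ddist(x_0,y_0)$, $w_j=\ddist(x_{j-1}-r,x_j)$, $z_{j+1}=\ddist(y_{j-1}-r,y_j)$, and that $x_j$ (resp.\ $y_j$) is the nearest clockwise point of $X_n$ strictly after $x_{j-1}-r$ (resp.\ $y_{j-1}-r$). Because $r$ is irrational, the $2i+2$ ``anchor'' points $x_0,\,x_0-r,\,\ldots,\,x_0-ir$ (which are, up to arbitrarily small error once $n$ is large, where the searches for $y_0,x_1,\ldots,x_{i+1}$ and $y_1,\ldots,y_{i+1}$ begin) are pairwise distinct, hence separated by some fixed positive minimal distance $\delta=\delta(r,i)$. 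Choosing $c<\delta/2$, I would argue that with probability $\to1$ the $2i+2$ short arcs $[x_0,y_0],[x_0-r,y_1],\ldots,[x_i-r,y_{i+1}]$ are pairwise disjoint and each has length $<c$: indeed each arc has length at most the distance to the nearest clockwise point of $X_n$ from its left endpoint, and $\prob[nP_n>(n-m-1)c]\to e^{-(n-m-1)c}\to 0$ gives the bound, while disjointness follows once all arcs are shorter than $\delta/2$. This establishes the first bullet of Lemma~\ref{lem:joint-pdf}.

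For the density computation, I would condition on the approximate positions of the anchor points and then observe that, on the event that the $2i+2$ arcs are disjoint, each of the $2i+2$ distances is determined by the position of the nearest clockwise point of $X_n$ in a region disjoint from all the others used so far. Building the density one variable at a time: given that $k-1$ points of $X_n$ have already been ``used'' to the left of fresh territory, the conditional density of the next gap $u_k\in(0,c)$ is exactly $g_{n,k}(u_1,\ldots,u_k)$ — this is the standard order-statistic computation for uniform points on $S^1$, namely the probability that the remaining $n-k$ of the $n$ sample points avoid the already-consumed arc of length $u_1+\cdots+u_{k-1}$ and that one of them lands in $[\cdot,\cdot+du_k)$. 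Telescoping the product collapses to $(n-1)\cdots(n-2i-2)(1-\|u\|_1)^{n-2i-3}$, exactly as required. The one subtlety is that this exact product formula only holds on the disjointness event, but since that event has probability $\to1$ and the complementary contribution to $\prob[u\in K_i]$ is $o(1)$, the conclusion of Lemma~\ref{lem:joint-pdf} is unaffected; I would phrase this as: the two hypotheses of Lemma~\ref{lem:joint-pdf} hold in the limit, which is all that is used in its proof.

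The main obstacle is making the ``sequential independence'' bookkeeping rigorous: the anchor points $x_0-jr$ are fixed only asymptotically (the true searches start at $x_{j-1}-r$, not $x_0-jr$), so one must show the discrepancy between $x_{j-1}-r$ and $x_0-jr$ — which is itself $\sum_{\ell\le j-1}w_\ell$ minus lower-order terms, hence $O(c)$ on the relevant event — does not destroy disjointness, and that conditioning on these random search-start locations still yields the clean product density. I expect this is handled exactly as in the informal discussion of Section~\ref{sect:prob}: fix $i$, take $n\to\infty$, and everything stays sufficiently separated. Once that is granted, the result is immediate from Lemma~\ref{lem:joint-pdf} with $m=2i+2$.
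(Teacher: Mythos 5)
Your overall architecture matches the paper's: choose $c$ small so that the anchor points $x_0,x_0-r,\ldots$ stay separated, show the joint density of $u$ on $K_i\cap(0,c)^{2i+2}$ is $\prod_{k=1}^{2i+2} g_{n,k}$, and then invoke Lemma~\ref{lem:joint-pdf}. The observation that irrationality of $r$ forces the anchor points to be distinct and hence separated by a fixed $\delta=\delta(r,i)$ is exactly right, and the paper's $c'$ plays the role of your $\delta$.

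The gap is in the disjointness claim. You assert that, with probability tending to $1$, the $2i+2$ arcs of lengths $z_1,w_1,\ldots,z_{i+1},w_{i+1}$ are pairwise disjoint (you also write ``$2i+2$'' but list only the $i+2$ envelope arcs $[x_0,y_0],[x_0-r,y_1],\ldots$, which is a separate, correct, probability-$\to1$ fact). The $2i+2$ small arcs are \emph{not} disjoint with high probability: for each $j$, the arcs $[x_{j-1}-r,x_j]$ (length $w_j$) and $[y_{j-1}-r,y_j]$ (length $z_{j+1}$) start at two points separated by only $\ddist(x_{j-1},y_{j-1})=\sum_{k\le j}z_k-\sum_{k<j}w_k<c$, and they are disjoint precisely when $w_j\le\ddist(x_{j-1},y_{j-1})$, i.e.\ exactly when $H(j,j)$ holds. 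Each $\neg H(j,j)$ has limiting probability bounded away from $0$, so the full disjointness event has limiting probability strictly below $1$. Consequently the ``complementary contribution is $o(1)$'' step does not hold, and a reader cannot conclude the hypothesis of Lemma~\ref{lem:joint-pdf} from what you have written.

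The fix is precisely what the paper does: the disjointness you need is not an asymptotic probability-$1$ event but is \emph{implied} by $u\in K_i\cap(0,c)^{2i+2}$, since $K_i\subseteq H(1,1)\cap\cdots\cap H(i,i)$ and those inequalities force $x_j\neq y_j$ and the corresponding arcs apart. One therefore conditions \emph{sequentially} on $H(1,1),\ldots,H(j,j)$ while deriving the conditional density of each next gap, obtaining $f_{z_{j+1}|\ldots,H(1,1),\ldots,H(j,j)}=g_{n,2j+1}$ and $f_{w_{j+1}|\ldots,H(1,1),\ldots,H(j,j)}=g_{n,2j+2}$. This gives the product density \emph{exactly} (not up to $o(1)$) on $K_i\cap(0,c)^{2i+2}$, which is the form Lemma~\ref{lem:joint-pdf} requires. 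If you rephrase your ``disjointness event'' as this constraint implied by $K_i$ rather than as a high-probability event, the rest of your argument goes through unchanged.
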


\begin{proof}
Let $c'$ be the smallest distance between any two points in $\{x_0, x_0-r, \ldots, x_0-(i+1)r\}$, and let $c=\min\{\frac{c'}{i+1},\frac{1}{2i+2}\}$. As $n\to\infty$, it happens with probability one that $u\in(0,c)^{2i+2}$. Hence for $1\le j\le i+1$ we have $\ddist(x_0-jr,x_j)=w_1+\ldots+w_j<c'$, and for $1\le j\le i$ we have $\ddist(x_0-jr,y_j)=z_1+\ldots+z_{j+1}<c'$. When these equations hold the only pairs of points in $\{x_0,y_0,\ldots,x_i,y_i,x_{i+1}\}$ which are potentially not distinct are $x_j$ and $y_j$ for $1\le j\le i$. We will later condition on the events $H(1,1),\ldots,H(j,j)$, which will imply furthermore that $x_k \neq y_k$ for $1\le k\le j$ and that the arcs along the circle representing the lengths $z_1,w_1,\ldots,z_j,w_j$ are disjoint.

Let $t_1\in(0,c)$. We have $\prob[z_1>t_1]=(1-t_1)^{n-1}$ since there are $n-1$ points (besides $x_0$) that need to miss a region of area $t_1$. So $\prob[z_1<t_1]=1-(1-t_1)^{n-1}$ and the probability density function for $t_1\in(0,c)$ is 
\[f_{z_1}(t_1)=(n-1)(1-t_1)^{n-2}=g_{n,1}(t_1).\]

Let $(t_1,t_2)\in(0,c)^2$. We have $\prob[w_1>t_2|z_1=t_1]=(\frac{1-t_1-t_2}{1-t_1})^{n-2}$ since there are $n-2$ points (besides $x_0$ and $y_0$) in a region of area $1-t_1$ that need to miss a region of area $t_2$. So $\prob[w_1<t_2|z_1=t_1]=1-(\frac{1-t_1-t_2}{1-t_1})^{n-2}$ and the conditional probability density function for $(t_1,t_2)\in(0,c)^2$ is
\[f_{w_1|z_1=t_1}(t_2)=(n-2)\frac{(1-t_1-t_2)^{n-3}}{(1-t_1)^{n-2}}=g_{n,2}(t_1,t_2).\]

Let $(t_1,t_2,t_3)\in(0,c)^3$ and let $t_1>t_2$ (i.e.\ $H(1,1)$ holds). Then the arcs along the circle representing the lengths $z_1,w_1,z_2$ are disjoint. Hence we have $\prob[z_2>t_3|w_1=t_2,z_1=t_1]=(\frac{1-t_1-t_2-t_3}{1-t_1-t_2})^{n-3}$ since there are $n-3$ points in a region of area $1-t_1-t_2$ that need to miss a region of area $t_3$. So the conditional probability density function for $(t_1,t_2,t_3)\in(0,c)^3$ is
\[f_{z_2|z_1=t_1,w_1=t_2,H(1,1)}(t_3)=(n-3)\frac{(1-t_1-t_2-t_3)^{n-4}}{(1-t_1-t_2)^{n-3}}=g_{n,3}(t_1,t_2,t_3).\]

This pattern continues as we condition on the events $H(1,1),\ldots,H(j,j)$. For $(t_1,\ldots,t_{2i+2})\in(0,c)^{2i+2}$ we have the following conditional probability density functions.
\begin{align*}
f_{z_1}(t_1)&=g_{n,1}(t_1) \\
f_{w_1|z_1=t_1}(t_2)&=g_{n,2}(t_1,t_2) \\
f_{z_2|z_1=t_1,w_1=t_2,H(1,1)}(t_3)&=g_{n,3}(t_1,t_2,t_3) \\
f_{w_2|z_1=t_1,w_1=t_2,z_2=t_3,H(1,1)}(t_4)&=g_{n,4}(t_1,t_2,t_3,t_4) \\
&\vdots \\
f_{z_{i+1}|z_1=t_1,\ldots,w_i=t_{2i},H(1,1),\ldots,H(i,i)}(t_{2i+1})&=g_{n,2i+1}(t_1,\ldots,t_{2i+1}) \\
f_{w_{i+1}|z_1=t_1,w_1=t_2,\ldots,z_{i+1}=t_{2i+1},H(1,1),\ldots,H(i,i)}(t_{2i+2})&=g_{n,2i+2}(t_1,\ldots,t_{2i+2}). 
\end{align*}
It follows that the joint probability density function of $u$ on
\[H(1,1)\cap\cdots\cap H(i,i)\cap(0,c)^{2i+2} \supseteq K_i\cap(0,c)^{2i+2}\]
is $\prod_{k=1}^{2i+2} g_{n,k}(u_1,\ldots,u_k)$. Note $K_i\subseteq\RR_+^{2i+2}$ is a measurable cone, and hence Lemma~\ref{lem:joint-pdf} with $m=2i+2$ and $c\le\frac{1}{2i+2}=\frac{1}{m}$ implies $\lim_{n\to\infty}\prob[u\in K_i]=\int_{K_i}e^{-\|x\|_1}\ dx$.
\end{proof}

\begin{lemma}\label{lem:K_i(q)}
For $r=\frac{p}{q}$ rational we have $\lim_{n\to\infty}\prob[u\in K_i(q)]=\int_{K_i(q)} e^{-\|x\|_1}\ dx$. 
\end{lemma}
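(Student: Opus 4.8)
\textbf{Proof plan for Lemma~\ref{lem:K_i(q)}.}
The plan is to mimic the structure of the proof of Lemma~\ref{lem:K_i}, but conditioning throughout on the larger collection of inequalities that defines $K_i(q)$, so that the arcs on the circle realizing the gaps $z_1,w_1,z_2,w_2,\ldots,z_{i+1},w_{i+1}$ become pairwise disjoint despite the fact that $x_0,x_0-r,\ldots,x_0-ir$ are no longer distinct when $r=\frac pq$. First I would set up the local scale: let $c$ be a small constant (of the form $\min\{\frac{c'}{2i+2},\frac{1}{2i+2}\}$ for an appropriate geometric separation constant $c'$ depending only on $r$ and $i$) so that with probability tending to $1$ all the variables in $u$ lie in $(0,c)$, and the only potential coincidences among the points $\{x_j,y_j\}$ come from the relation $r=\frac pq$, i.e.\ from $x_j$ being close to $y_{j-q}$ for $j\ge q$.

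Next I would build up the joint density of $u$ one coordinate at a time, exactly as in Lemma~\ref{lem:K_i}, revealing the points in the order $z_1,w_1,z_2,w_2,\ldots$ and conditioning at each stage on all the inequalities $H(j,j)$ revealed so far (which keep $x_k\ne y_k$ and the first $j$ pairs of arcs disjoint) \emph{and} on the relevant inequalities $\neg H(j,q+j-2)$ from the second row of \eqref{eq:define-k}. The key point is that, once we condition on $\neg H(j,q+j-2)$, the arc of length $z_j$ (respectively the arcs realizing the later gaps) no longer overlaps the arc near $x_0-jr=x_0-(j-q)r-\text{(small)}$ already revealed when we processed the $y$-chain; concretely, $\neg H(j+1,q+j-1)$ asserts $z_1+\cdots+z_{j+1}\ge w_1+\cdots+w_{q+j-1}$, which translates geometrically (cf.\ the discussion after \eqref{eq:define-k} in the proof of Proposition~\ref{prop:levels}) to the arcs $[x_j,y_j]_{S^1}$ and $[x_{q+j},y_{q+j}]_{S^1}$ being disjoint. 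Under these conditions each newly revealed point sits in a region disjoint from all previously revealed arcs, so the conditional density of the $k$-th coordinate is exactly $g_{n,k}(u_1,\ldots,u_k)$ as in the unconditional case, and the product telescopes to
\[ \prod_{k=1}^{2i+2} g_{n,k}(u_1,\ldots,u_k) = (n-1)\cdots(n-2i-2)\,(1-u_1-\cdots-u_{2i+2})^{n-2i-3} \]
on the region $\bigl(\bigcap_{j=1}^i H(j,j)\bigr)\cap\bigl(\bigcap_{j}\neg H(j,q+j-2)\bigr)\cap(0,c)^{2i+2}$, which contains $K_i(q)\cap(0,c)^{2i+2}$. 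Since $K_i(q)\subseteq\RR_+^{2i+2}$ is a measurable cone, Lemma~\ref{lem:joint-pdf} with $m=2i+2$ then yields $\lim_{n\to\infty}\prob[u\in K_i(q)]=\int_{K_i(q)}e^{-\|x\|_1}\,dx$.

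The main obstacle I anticipate is bookkeeping rather than any new idea: one must verify carefully that the particular set of inequalities listed in \eqref{eq:define-k} is \emph{exactly} enough to force all $2i+2$ arcs pairwise disjoint (not just the $x_j$-vs-$y_{j-q}$ collisions but any other near-coincidence created by wrapping around $S^1$), and that revealing the coordinates in the stated order is consistent with the conditioning — i.e.\ the inequality needed to disentangle the $k$-th arc only involves coordinates $u_1,\ldots,u_k$ already revealed. This is where the proof of Proposition~\ref{prop:levels} (the identification of each $\neg H(j,q+j-2)$ with the exclusion of a $(q,j-1)$-swift configuration, hence with a disjointness of arcs) does the real work; once that geometric dictionary is in place, the density computation and the appeal to Lemma~\ref{lem:joint-pdf} are routine. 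A secondary technical point is to check that the separation constant $c'$ can indeed be chosen uniformly (depending only on $r$ and $i$, not on $n$), which holds because the finitely many points $x_0,x_0-r,\ldots,x_0-ir$ have a fixed minimum pairwise distance on $S^1=\RR/\ZZ$.
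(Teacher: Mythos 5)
Your plan matches the paper's proof of Lemma~\ref{lem:K_i(q)} essentially step for step: build the joint density of $u$ coordinate-by-coordinate in the order $z_1,w_1,\ldots,z_{i+1},w_{i+1}$, conditioning at each stage on the $H(j,j)$ and $\neg H(j,q+j-2)$ inequalities accumulated so far, check that under these conditions each newly revealed arc is disjoint from the previously revealed ones so that the conditional density is exactly $g_{n,k}$, and then apply Lemma~\ref{lem:joint-pdf}. Two small slips are worth correcting, though neither affects the soundness of the plan. First, you write that $\neg H(j+1,q+j-1)$ asserts $z_1+\cdots+z_{j+1}\ge w_1+\cdots+w_{q+j-1}$; with the paper's sign convention that $\ge$ inequality is $H(j+1,q+j-1)$ itself, and its negation is the strict reverse $z_1+\cdots+z_{j+1}< w_1+\cdots+w_{q+j-1}$. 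Second, your closing remark that the points $x_0,x_0-r,\ldots,x_0-ir$ have a fixed positive minimum pairwise distance is false in the rational case: for $i\ge q$ these points repeat, which is precisely the source of the $x_j$ versus $y_{j-q}$ coincidences you correctly identified earlier. The uniform separation one actually needs is between the $q$ distinct residues $\{j/q : 0\le j<q\}$, namely $1/q$, and this is what the paper's choice $c=\min\{\frac{1}{q(i+1)},\frac{1}{2i+2}\}$ encodes.
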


\begin{proof}

Let $c=\min\{\frac{1}{q(i+1)},\frac{1}{2i+2}\}$. As $n\to\infty$, it happens with probability one that $u\in(0,c)^{2i+2}$. Hence for $1\le j\le i+1$ we have $\ddist(x_0-({j\md q})r,x_j)=w_1+\ldots+w_j<\frac{1}{q}$, and for $1\le j\le i$ we have $\ddist(x_0-({j\md q})r,y_j)=z_1+\ldots+z_{j+1}<\frac{1}{q}$. When these equations hold the only pairs of points in $\{x_0,y_0,\ldots,x_i,y_i,x_{i+1}\}$ which are potentially not distinct are $x_j$ and $y_j$ for $1\le j\le i$, and $y_j$ and $x_{q+j}$ for $0\le j\le i-q+1$. We will condition in order on the events
\[ H(1,1),\ldots,H(q-2,q-2)\quad\mbox{and}\quad H(q-1,q-1),\neg H(1,q-1),\ldots,H(i,i),\neg H(i-q+2,i),\]
which will imply furthermore that the points $\{x_0,y_0,\ldots,x_i,y_i,x_{i+1}\}$ are distinct and that the arcs along the circle representing the lengths $z_1,w_1,\ldots,z_{i+1},w_{i+1}$ are disjoint. Indeed, for $(t_1,\ldots,t_{2i+2})\in(0,c)^{2i+2}$ we have the following conditional probability density functions.
\begin{align*}
f_{z_1}(t_1)&=g_{n,1}(t_1) \\
f_{w_1|z_1=t_1}(t_2)&=g_{n,2}(t_1,t_2) \\
f_{z_2|z_1=t_1,w_1=t_2,H(1,1)}(t_3)&=g_{n,3}(t_1,t_2,t_3) \\
f_{w_2|z_1=t_1,w_1=t_2,z_2=t_3,H(1,1)}(t_4)&=g_{n,4}(t_1,t_2,t_3,t_4) \\
&\vdots \\
f_{z_q|\ldots,H(1,1),\ldots,H(q-1,q-1)}(t_{2q-1})&=g_{n,2q-1}(t_1,\ldots,t_{2q-1}) \\
f_{w_q|\ldots,H(1,1),\ldots,H(q-1,q-1),\neg H(1,q-1)}(t_{2q})&=g_{n,2q}(t_1,\ldots,t_{2q}) \\
f_{z_{q+1}|\ldots,H(1,1),\ldots,H(q-1,q-1),\neg H(1,q-1),H(q,q)}(t_{2q+1})&=g_{n,2q+1}(t_1,\ldots,t_{2q+1}) \\
f_{w_{q+1}|\ldots,H(1,1),\ldots,H(q-1,q-1),\neg H(1,q-1),H(q,q),\neg H(2,q)}(t_{2q+2})&=g_{n,2q+2}(t_1,\ldots,t_{2q+2})\\
&\vdots \\
f_{z_{i+1}|\ldots,H(1,1),\ldots,H(q-1,q-1),\neg H(1,q-1),\ldots,\neg H(i-q+1,i-1),H(i,i)}(t_{2i+1})&=g_{n,2i+1}(t_1,\ldots,t_{2i+1}) \\
f_{w_{i+1}|\ldots,H(1,1),\ldots,H(q-1,q-1),\neg H(1,q-1),\ldots,H(i,i),\neg H(i-q+2,i)}(t_{2i+2})&=g_{n,2i+2}(t_1,\ldots,t_{2i+2}). 
\end{align*}
It follows that the joint probability density function of $u$ on
\[H(1,1)\cap\ldots\cap H(i,i)\cap\neg H(1,q-1)\cap\ldots\cap\neg H(i-q+2,i)\cap(0,c)^{2i+2} \supseteq K_i(q)\cap(0,c)^{2i+2}\]
is $\prod_{k=1}^{2i+2} g_{n,k}(u_1,\ldots,u_k)$. Note $K_i(q)\subseteq\RR_+^{2i+2}$ is a measurable cone, and hence Lemma~\ref{lem:joint-pdf} with $m=2i+2$ and $c\le\frac{1}{2i+2}=\frac{1}{m}$ implies $\lim_{n\to\infty}\prob[u\in K_i(q)]=\int_{K_i(q)}e^{-\|x\|_1}\ dx$.
\end{proof}

In Lemma~\ref{lem:S_i(q)} we let $u=(z_1,\ldots,z_{i+q-2},w_1,\ldots,w_{i+q-2})\in \RR^{2i+2q-2}$, as in Section~\ref{SS:qswiftcount}.

\begin{lemma}\label{lem:S_i(q)}
For $r=\frac{p}{q}$ rational we have $\lim_{n\to\infty}\prob[u\in S_i(q)]=\int_{S_i(q)} e^{-\|x\|_1}\ dx$. 
\end{lemma}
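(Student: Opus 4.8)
The plan is to imitate the proof of Lemma~\ref{lem:K_i(q)} almost verbatim, since $S_i(q)$ differs from $K_i(q)$ only in the last inequality ($H(i+1,i+q-1)$ replaces $\neg H(i+1,i+1)$) and in the number of coordinates. First I would set up the geometric picture: following Section~\ref{SS:qswiftcount}, a periodic point $x_0$ of swiftness type $i$ is witnessed by a $(q,i)$-swift point, and unravelling Definition~\ref{def:qi-swift} in terms of the gap variables $z_j,w_j$ produces exactly the cone $S_i(q)\subseteq\RR^{2i+2q-2}_+$. So the goal is to show that the random vector $u=(z_1,\ldots,z_{i+q-1},w_1,\ldots,w_{i+q-1})$ satisfies $\lim_{n\to\infty}\prob[u\in S_i(q)]=\int_{S_i(q)}e^{-\|x\|_1}\,dx$, and this will follow from Lemma~\ref{lem:joint-pdf} once we verify its two hypotheses with $m=2i+2q-2$.

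The key steps, in order, are: (1) choose $c=\min\{\frac{1}{q(i+q-1)},\frac{1}{2i+2q-2}\}$ and observe that, as $n\to\infty$, with probability one $u\in(0,c)^{m}$, which by the same distance estimate as in Lemma~\ref{lem:K_i(q)} forces $\ddist(x_0-(j\md q)r,x_j)<\frac1q$ and $\ddist(x_0-(j\md q)r,y_j)<\frac1q$, so that the only pairs among $\{x_0,y_0,\ldots,x_{i+q-1},y_{i+q-1}\}$ that can coincide are the $(x_j,y_j)$ pairs and the $(y_j,x_{q+j})$ pairs; (2) condition, in the correct order, on the inequalities $H(1,1),\ldots,H(i+q-1,i+q-1)$ together with the interleaved negations $\neg H(1,q-1),\ldots,\neg H(i,i+q-2)$ — each such conditioning event, when it holds, guarantees the relevant circular arcs of lengths $z_1,w_1,\ldots$ are disjoint, exactly as in the two previous lemmas; (3) compute the resulting chain of conditional density functions and check that each equals the appropriate $g_{n,k}(t_1,\ldots,t_k)$, so that the joint density of $u$ on $H(1,1)\cap\cdots\cap H(i+q-1,i+q-1)\cap\neg H(1,q-1)\cap\cdots\cap\neg H(i,i+q-2)\cap(0,c)^m$, a set containing $S_i(q)\cap(0,c)^m$, equals $\prod_{k=1}^{m}g_{n,k}(u_1,\ldots,u_k)$; (4) note $S_i(q)$ is a measurable cone in $\RR^m_+$ and invoke Lemma~\ref{lem:joint-pdf}.

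The one genuinely new point to be careful about is that the final defining inequality of $S_i(q)$ is $H(i+1,i+q-1)$, an inequality of the \emph{same} orientation as the $H(j,j)$ chain rather than a negation; but this inequality does not need to be used as a conditioning event that creates disjoint arcs — it is simply one more half-space whose intersection with the already-established region we restrict to at the end, exactly as $\neg H(i+1,i+1)$ was handled in Lemma~\ref{lem:K_i(q)}. Concretely, one only conditions on enough inequalities to force the points $\{x_0,y_0,\ldots,x_{i+q-1},y_{i+q-1}\}$ distinct and the arcs disjoint (that is, $H(1,1),\ldots,H(i+q-1,i+q-1)$ and $\neg H(1,q-1),\ldots,\neg H(i,i+q-2)$), obtaining the product density on a superset of $S_i(q)\cap(0,c)^m$, and then Lemma~\ref{lem:joint-pdf} does the rest. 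So I expect the main (modest) obstacle to be bookkeeping: getting the index ranges on the conditioning events and the arc-disjointness claims correct for the larger ambient dimension $2i+2q-2$, and confirming that $c\le\frac1m$ still holds so Lemma~\ref{lem:joint-pdf} applies. Everything else is identical to Lemma~\ref{lem:K_i(q)}.
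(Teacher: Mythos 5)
Your proposal is correct and follows the paper's proof of Lemma~\ref{lem:S_i(q)} essentially verbatim: same choice of $c$, same distance estimates showing $u\in(0,c)^{2i+2q-2}$ forces the arcs to fit locally, same sequential conditioning to make the arcs disjoint, and the same invocation of Lemma~\ref{lem:joint-pdf} with $m=2i+2q-2$. The only cosmetic slip is that you list $H(i+q-1,i+q-1)$ among the conditioning events, but that inequality involves the final variable $w_{i+q-1}$ and so cannot appear in the sequential chain of conditional densities (the paper's chain stops at $H(i+q-2,i+q-2)$ and $\neg H(i,i+q-2)$, and then both $H(i+q-1,i+q-1)$ and $H(i+1,i+q-1)$ are absorbed by the final intersection with $S_i(q)$, exactly as you say for the latter), and the point $y_{i+q-1}$ you mention never actually arises (the variables only go up to $x_{i+q-1}$ and $y_{i+q-2}$); neither affects the argument.
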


\begin{proof}

Let $c=\min\{\frac{1}{q(i+q-1)},\frac{1}{2i+2q-2}\}$. As $n\to\infty$, it happens with probability one that $u\in(0,c)^{2i+2q-2}$. Hence for $1\le j\le i+q-1$ we have $\ddist(x_0-({j\md q})r,x_j)=w_1+\ldots+w_j<\frac{1}{q}$, and for $1\le j\le i+q-2$ we have $\ddist(x_0-({j\md q})r,y_j)=z_1+\ldots+z_{j+1}<\frac{1}{q}$. When these equations hold the only pairs of points in $\{x_0,y_0,\ldots,x_{i+q-2},y_{i+q-2},x_{i+q-1}\}$ which are potentially not distinct are $x_j$ and $y_j$ for $1\le j\le i+q-2$, and $y_j$ and $x_{q+j}$ for $0\le j\le i-1$. We will condition in order on the events
\[ H(1,1),\ldots,H(q-2,q-2)\quad\mbox{and}\quad H(q-1,q-1),\neg H(1,q-1),\ldots,H(i+q-2,i+q-2),\neg H(i,i+q-2),\]
which will imply furthermore that the points in $\{x_0,y_0,\ldots,x_{i+q-2},y_{i+q-2},x_{i+q-1}\}$ are distinct and that the arcs along the circle representing the lengths $z_1,w_1,\ldots,z_{i+q-1},w_{i+q-1}$ are disjoint. Indeed, for $(t_1,\ldots,t_{2i+2q-2})\in(0,c)^{2i+2}$ we have the following conditional probability density functions.
\begin{align*}
f_{z_1}(t_1)&=g_{n,1}(t_1) \\
f_{w_1|z_1=t_1}(t_2)&=g_{n,2}(t_1,t_2) \\
f_{z_2|z_1=t_1,w_1=t_2,H(1,1)}(t_3)&=g_{n,3}(t_1,t_2,t_3) \\
f_{w_2|z_1=t_1,w_1=t_2,z_2=t_3,H(1,1)}(t_4)&=g_{n,4}(t_1,t_2,t_3,t_4) \\
&\vdots \\
f_{z_q|\ldots,H(1,1),\ldots,H(q-1,q-1)}(t_{2q-1})&=g_{n,2q-1}(t_1,\ldots,t_{2q-1}) \\
f_{w_q|\ldots,H(1,1),\ldots,H(q-1,q-1),\neg H(1,q-1)}(t_{2q})&=g_{n,2q}(t_1,\ldots,t_{2q}) \\
f_{z_{q+1}|\ldots,H(1,1),\ldots,H(q-1,q-1),\neg H(1,q-1),H(q,q)}(t_{2q+1})&=g_{n,2q+1}(t_1,\ldots,t_{2q+1}) \\
f_{w_{q+1}|\ldots,H(1,1),\ldots,H(q-1,q-1),\neg H(1,q-1),H(q,q),\neg H(2,q)}(t_{2q+2})&=g_{n,2q+2}(t_1,\ldots,t_{2q+2})\\
&\vdots \\
f_{z_{i+q-1}|\ldots,H(i+q-2,i+q-2)}(t_{2i+2q-3})&=g_{n,2i+2q-3}(t_1,\ldots,t_{2i+2q-3}) \\
f_{w_{i+q-1}|\ldots,H(i+q-2,i+q-2),\neg H(i,i+q-2)}(t_{2i+2q-2})&=g_{n,2i+2q-2}(t_1,\ldots,t_{2i+2q-2}). 
\end{align*}
It follows that the joint probability density function of $u$ on
\[H(1,1)\cap\ldots\cap H(i+q-2,i+q-2)\cap\neg H(1,q-1)\cap\ldots\cap\neg H(i,i+q-2)\cap(0,c)^{2i+2q-2} \supseteq S_i(q)\cap(0,c)^{2i+2q-2}\]
is $\prod_{k=1}^{2i+2q-2} g_{n,k}(u_1,\ldots,u_k)$. Note $S_i(q)\subseteq\RR_+^{2i+2q-2}$ is a measurable cone, and hence Lemma~\ref{lem:joint-pdf} with $m=2i+2q-2$ and $c\le\frac{1}{2i+2q-2}=\frac{1}{m}$ implies $\lim_{n\to\infty}\prob[u\in S_i(q)]=\int_{S_i(q)}e^{-\|x\|_1}\ dx$.
\end{proof}

\end{document}